\renewcommand{\v}[1]{\ensuremath{\mathbf{#1}}}
\newcommand{\mc}{\mathcal}
\newcommand{\be}{\begin{enumerate}}
\newcommand{\ee}{\end{enumerate}}
\newtheorem{lemma}{Lemma}
\renewcommand{\Re}{\mathbb{R}} 
\newcommand{\fhat}{\hat{f}} %
\newcommand{\fBefore}{f^{\textrm{before}}} %
\newcommand{\fAfter}{f^{\textrm{after}}} %
\title[Article Title]{Solution Polishing via Path Relinking for Continuous Black-Box Optimization}
\author*[1]{\fnm{Dimitri J.} \sur{Papageorgiou}}\email{dimitri.j.papageorgiou@exxonmobil.com}
\author[2]{\fnm{Jan} \sur{Kronqvist}}\email{jankr@kth.se}
\author[3]{\fnm{Asha} \sur{Ramanujam}}\email{aramanuj@purdue.edu}
\author[4]{\fnm{James} \sur{Kor}}\email{jkor@purdue.edu}
\author[1]{\fnm{Youngdae} \sur{Kim}}\email{youngdae.kim@exxonmobil.com}
\author[3]{\fnm{Can} \sur{Li}}\email{canli@purdue.edu}
\affil*[1]{\orgdiv{Energy Sciences}, \orgname{ExxonMobil Technology and Engineering Company}, \orgaddress{\street{1545 Route 22 East}, \city{Annandale}, \postcode{08801}, \state{NJ}, \country{USA}}}
\affil[2]{\orgdiv{Department of Mathematics}, \orgname{KTH Royal Institute of Technology}, \orgaddress{\street{Lindstedtsvagen 25}, \city{Stockholm}, \postcode{100 44}, \state{Stockholm}, \country{Sweden}}}
\affil[3]{\orgdiv{Charles D. Davidson School of Chemical Engineering}, \orgname{Purdue University}, \orgaddress{\street{480 W. Stadium Ave}, \city{West Lafayette}, \postcode{47907}, \state{IN}, \country{USA}}}
\affil[4]{\orgdiv{Department of Computer Science}, \orgname{Purdue University}, \orgaddress{\street{305 N University St}, \city{West Lafayette}, \postcode{47907}, \state{IN}, \country{USA}}}
\begin{document}

\abstract{
When faced with a limited budget of function evaluations, state-of-the-art black-box optimization (BBO) solvers struggle to obtain globally, or sometimes even locally, optimal solutions. In such cases, one may pursue solution polishing, i.e., a computational method to improve (or ``polish'') an incumbent solution, typically via some sort of evolutionary algorithm involving two or more solutions. 
While solution polishing in ``white-box'' optimization has existed for years, relatively little has been published regarding its application in costly-to-evaluate BBO.  To fill this void, we explore two novel methods for performing solution polishing along one-dimensional curves rather than along straight lines. We introduce a convex quadratic program that can generate promising curves through multiple elite solutions, i.e., via path relinking, or around a single elite solution.    
In comparing four solution polishing techniques for continuous BBO, we show that solution polishing along a curve is competitive with solution polishing using a state-of-the-art BBO solver.}

\keywords{black-box optimization, derivative-free optimization, line search, path relinking, solution polishing}

\maketitle

\section{Introduction} \label{sec:introduction}

This work focuses exclusively on computationally-expensive continuous black-box optimization (BBO) problems with variable bounds.  Mathematically, our interest lies in solution polishing applied to the problem
\begin{equation} \label{eq:generic_continuous_box_constrained_bbo_problem}
\min \Big\{ f(\v{x}) : \v{x} \in [\v{x}^L,\v{x}^U] \subset \Re^D \Big\}
\end{equation}
where $\v{x}^L$ and $\v{x}^U$ denote lower and upper bounds on the decision vector $\v{x} \in \Re^D$ and $f:\Re^D \mapsto \Re$ is a costly-to-evaluate black-box function, requiring hours to days of computing or large monetary sums to evaluate a single point $\v{x}$.  The term ``solution polishing'' refers to a computational method used to improve (or ``polish'') an incumbent solution, typically via some sort of restricted neighborhood search or evolutionary algorithm involving two or more solutions. 
Today, many commercial ``white-box'' mixed-integer linear and nonlinear optimization solvers (e.g., CPLEX, Gurobi, SCIP) include a solution polishing mechanism that a user can enable in hopes of finding a better solution \citep{rothberg2007evolutionary,gleixner2017scip}. In contrast, given the vast array of BBO solvers, solution polishing mechanisms for continuous BBO problems are not widely available/agreed upon.

Computationally-challenging black-box functions rear their ugly head in numerous scientific and engineering disciplines from molecular simulations to physical experiments requiring manual intervention. In many applications, the main limiting factor is the number (or ``budget'') of function evaluations that one can make. This is because time-consuming and/or monetarily-expensive simulations and experiments ultimately dictate the budget of function evaluations that a user is willing to allot.
Moreover, even with state-of-the-art methods and solvers, the function evaluation budget often hinders us from obtaining a globally, or even locally, optimal solution.

When restricted to a small number of function evaluations, the best solution obtained by a state-of-the-art BBO solver might not even be locally optimal. Thus, the idea of solution polishing becomes highly relevant and serves as the motivation for this work. With this backdrop, the goal of solution polishing can be stated as follows: \textit{Improve the quality of the current best-found solution by some additional calculations and a small number of additional function evaluations}.

In this study, we pursue this guiding question: Given a set of elite solutions, how effective are one-dimensional subspace searches ``involving'' (i.e., passing through) these elite solutions at improving/polishing the best incumbent solution?  We focus on two main settings in which we have (A) a set of elite solutions that we wish to polish; and (B) a single solution that we wish to polish.

The contributions of this paper are: 
\begin{enumerate}
\item We propose a convex quadratic optimization problem (QP) that allows us to explore a curve that passes through multiple elite solutions thereby extending the path relinking framework of ``straight linking'' to a more elaborate search along a curve.
\item Using the same QP, we show how to construct a ``propeller'' shape around a single elite solution to perform a thorough search along a curve passing through this elite solution.  
\item Computational experiments suggest that our proposed ``curve searches'' are competitive with using state-of-the-art DFO solvers for solution polishing.
\end{enumerate}

The remainder of this paper is organized as follows.
Section~\ref{sec:lit_review} briefly reviews the literature on solution polishing and path relinking. Furthermore, Section~\ref{sec:lit_review} describes an overview of the line search methodology that our proposed methods build upon.
In Section~\ref{sec:methods}, we introduce our convex QP for generating curves through elite solutions.
Section~\ref{sec:numerical_results} presents numerical results on a set of popular benchmark functions in 2, 4, 8, and 16 dimensions. 
We offer conclusions and future research directions in Section~\ref{sec:conclusions}.

\section{Literature review} \label{sec:lit_review}

Since the surveys by \cite{conn2009introduction}, \cite{larson2019derivative}, \cite{rios2013derivative}, and the references therein offer an extensive BBO introduction, we focus on discussing path relinking and efficient line search methods.

\subsection{Path relinking}

Path relinking (PR) is most often described as an intensification strategy to explore trajectories connecting elite solutions obtained by heuristic methods \citep{glover1998template}.  The term ``strategy'' refers to a high-level template or algorithmic framework as many implementation decisions are left to the user. \citet{glover1998template} initially introduced PR in tandem with scatter search, a population-based metaheuristic that has been successfully applied to challenging combinatorial optimization problems.  \citet{yin2010cyber} enumerate the similarities and differences between the PR approaches used in particle swarm optimization and scatter search.

While \citet{glover2000fundamentals} provide a detailed account of the evolutionary philosophy behind path relinking, we summarize it here. See also the survey by \cite{laguna202320}.
In contrast to traditional evolutionary algorithms that directly produce a child (i.e., a new solution) by combining two or more parents (i.e., original solutions) from a population, PR constructs paths between and beyond the selected solutions. One generates paths by attempting to introduce attributes from one or more elite guiding solutions into the current solution.  The hope is that by assimilating characteristics of elite solutions, an improving solution can be found. 

Figure~\ref{fig:Orthogonal_vs_straight_path_relinking} illustrates the difference between orthogonal and straight PR.  As shown in the left of the figure, ``orthogonal linking,'' which is most often used in combinatorial/discrete optimization settings, constructs trajectories from the initiating solution to the guiding solution in which ``attributes'' of the latter are gradually introduced into the former. While an attribute can be defined generally, in practice they are most often dimensions or coordinates of a solution.  A prototypical orthogonal linking algorithm goes as follows. First, one identifies the coordinates in which the two solutions differ. Second, one selects one of these coordinates at random and replaces the initial solution's coordinate value with that of the guiding solution. This process continues until the guiding solution is reached. Orthogonal linking generates trajectories along the edges of the hyper-rectangle defined by the two points (see Figure~\ref{fig:Orthogonal_vs_straight_path_relinking}). \citet{glover2000fundamentals} also permit exploring solutions beyond the two original solutions.

In contrast, ``straight linking'' constructs trajectories using linear combinations of initiating and guiding solutions. In its simplest form involving a single initiating solution and a single guiding solution, straight linking generates a sequence of solutions on, and potentially beyond, the line segment defined by the initiating and guiding solutions. See the right side of Figure~\ref{fig:Orthogonal_vs_straight_path_relinking}. 
\citet{duarte2011path} describe a more general straight linking strategy involving multiple guiding solutions.  For their computational experiments, they use two guiding solutions. 

\begin{figure}[h!] 
\centering
\includegraphics[width=9cm]{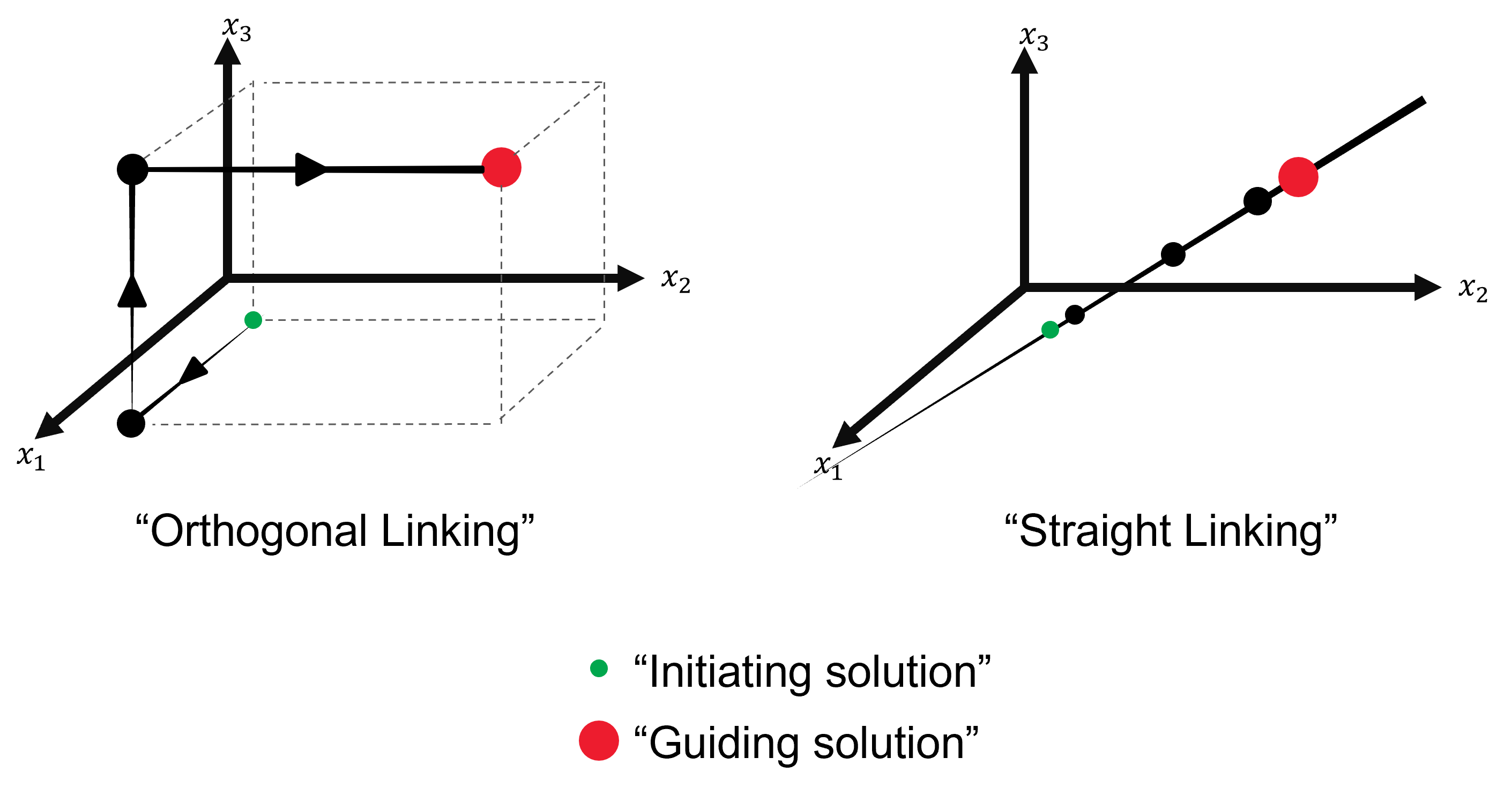}
\caption{Orthogonal vs. straight path relinking.}
\label{fig:Orthogonal_vs_straight_path_relinking}
\end{figure}

\subsection{Line Search}

There are a number of algorithms available to conduct a global line search. \citet{papageorgiou2024linewalker} recently showed that two of the most competitive methods on nonconvex functions are LineWalker and Bayesian optimization \citep{brochu2010tutorial,shahriari2015taking}. In this work, we take advantage of LineWalker, which we describe below.

Linewalker is a simple, but effective sampling method for optimizing and learning a discrete approximation of a deterministic, mostly smooth, multi-dimensional function along a one-dimensional line segment of interest. 
The method does not rely on derivative information.
The method constructs a smooth surrogate on a set of equally-spaced grid points by evaluating the true function at a sparse set of judiciously chosen grid points.  At each iteration, the surrogate's non-tabu local minima and maxima are identified as candidates for sampling. Tabu search constructs are also used to promote diversification. If no non-tabu extrema are identified, a simple exploration step is taken by sampling the midpoint of the largest unexplored interval. The algorithm continues until a user-defined function evaluation limit is reached. Numerous examples are shown to illustrate the algorithm's efficacy and superiority relative to state-of-the-art methods, including Bayesian optimization and NOMAD, on primarily nonconvex test functions. The numerical experiments clearly show that LineWalker is able to obtain good solutions, and accurate surrogates, using relatively few (typically only 20 -- 30) function evaluations.

Our goal is to take advantage of, and build upon, LineWalker's ability to efficiently find global minima in one dimension. Unlike traditional gradient/Hessian-based methods, which seek to find the nearest local optima and then stop, this method seeks to approximate the function along the entirety of one-dimensional subspaces of the $D$-dimensional search space using a small number of function evaluations. Since local information for a truly nonconvex function tells you nothing about the function's behavior far from the current point, the hope is that LineWalker, which searches beyond the nearest local optima, will efficiently uncover more information about the function than a traditional method.

\section{Methods} \label{sec:methods}
Here we present the different algorithmic approaches for solution polishing by optimizing over one-dimensional subspaces. Given a set of elite solutions, the simplest approach to utilize a line search is by considering the line segments obtained by connecting the different elite solutions by a straight line, i.e., by ``Straight Linking''. Such a straight linking approach is included in the numerical results and described in more detail in Section~\ref{sec:numerical_experiments}. However, our goal is to use a more elaborate linking approach that allows us to more efficiently utilize all function evaluations and focus the search to more promising neighborhoods. 

A revelation in this work was the realization that the one-dimensional search does not need to be limited to a straight line, but can in fact be any smooth curve of finite length. Considering a curve, instead of separate line segments, enables more efficient utilizing of function evaluations by learning a single one-dimensional surrogate function instead of several independent surrogate functions. For example, if we want to learn a function approximation of a smooth function along a curve, it is favorable to directly learn the approximation along the entire curve compared to dividing the curve into separate parts and learning these independently. The rationale behind this is straightforward: as we assume the function is smooth, it is likely that the function will follow a similar trajectory when moving from one part of the curve to another. As later described, it also allows us to reuse some function evaluations. 

Given a set of elite solutions, we want to construct a smooth curve that somehow links the elite solutions and that possesses promising characteristics which allow us to find an improved solution. A smooth curve is preferred to not introduce any additional non-smoothness into the function that we are trying to learn and optimize. This property is formally stated in the following lemma.

\begin{lemma}
Given a $D$-dimensional function $f$ that is smooth over $\mathcal{F}\subset\Re^D$, the function is also smooth along any smooth curve that lies within $\mathcal{F}$.
\end{lemma}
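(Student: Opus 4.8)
The plan is to recognize this statement as the classical fact that a composition of smooth maps is smooth, and to make precise what "smooth along a curve" should mean. First I would fix a smooth parametrization $\gamma : [a,b] \to \Re^D$ of the curve, with components $\gamma_1,\dots,\gamma_D$, and with image contained in $\mc{F}$. The natural definition of the restriction of $f$ to the curve is the scalar function $g(t) := f(\gamma(t)) = (f \circ \gamma)(t)$ for $t \in [a,b]$, and ``$f$ is smooth along the curve'' means precisely that $g$ is a smooth function of the single real parameter $t$. So the lemma reduces to showing that $g = f \circ \gamma$ is smooth.

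The core of the argument is then the chain rule. Since $\gamma([a,b]) \subseteq \mc{F}$, the function $f$ and all of its partial derivatives are defined and continuous at every point $\gamma(t)$, and $\gamma$ itself is smooth. Differentiating $g$ once gives
\begin{equation}
g'(t) = \nabla f(\gamma(t))^\top \gamma'(t) = \sum_{i=1}^{D} \frac{\partial f}{\partial x_i}\big(\gamma(t)\big)\, \gamma_i'(t),
\end{equation}
which is well-defined and continuous because it is a finite sum of products of continuous functions. This establishes that $g$ is $C^1$.

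For higher-order smoothness I would proceed by induction on the order of differentiation. Applying the product rule and the chain rule repeatedly to the expression for $g'$ produces derivatives of $g$ that are finite sums of products of (i) partial derivatives of $f$ of some order, evaluated at $\gamma(t)$, and (ii) derivatives of the components $\gamma_i(t)$; the explicit combinatorial form is given by Fa\`a di Bruno's formula, though one does not need it in closed form. Because $f$ is smooth on $\mc{F}$ and each $\gamma_i$ is smooth on $[a,b]$, every factor appearing at each stage is itself continuous (indeed smooth), so each derivative $g^{(k)}$ exists and is continuous. Hence $g$ is smooth to every order for which $f$ and $\gamma$ are, which is exactly the claim.

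There is no deep obstacle here, as this is a standard composition result; the only points requiring care are definitional rather than technical. I would make sure to (a) state the domain condition $\gamma([a,b]) \subseteq \mc{F}$ explicitly, so that $f$ and its derivatives are always evaluated where they are guaranteed to be smooth, and (b) remark that the conclusion is independent of the chosen smooth parametrization, since any smooth reparametrization $t = \phi(s)$ is itself smooth and composing $g$ with $\phi$ again yields a smooth function. This last observation confirms that ``smoothness of $f$ along the curve'' is a genuine property of the curve and not an artifact of how it is parametrized.
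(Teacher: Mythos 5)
Your proof is correct and takes the same route as the paper, which simply observes that the composition of two smooth functions ($f$ and the curve's parametrization $\gamma$) is smooth. Your write-up fleshes out this one-line argument with the chain rule, induction on derivative order, and a reparametrization-independence remark, but the underlying idea is identical.
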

\begin{proof}
The proof follows trivially from the fact that a composition of two smooth functions is also smooth.
\end{proof}
Next, we present a simple approach for constructing a smooth curve that links a set of elite solutions.

\subsection{Optimizing curves}
The task of finding a smooth curve that goes through a given set of points $\{\v{p}_k\}_{k=0}^K$ in $\Re^D$ can be posed as an optimal control problem. For a more intuitive presentation, we envision a steel ball that moves frictionlessly without any external forces except a control force that we can apply to accelerate the ball in any desired direction. A smooth curve that goes through the points $\{\v{p}_k\}_{k=0}^N$ can then be obtained by minimizing the total force applied to the steel ball with the constraints that the ball must be at the points $p_k$ at the points in time $T_k$. Due to the momentum of the ball, a solution to the control problem is a smooth curve. Since we are minimizing the total acceleration, one could even claim that it is an optimally smooth curve. For the LineWalker algorithm, we only need a set of discrete samples along the curve, and we do not need the entire continuous curve. Therefore, we consider a discretization of the control problem that can be written as

\begin{equation}
\begin{aligned}
\label{eq:QP}
&\min &&\sum_{t=0}^T 	\lVert \v{a}_t \rVert^2_2   \\ 
&\text{s.t.} &&\v{x}_t - \v{x}_{t-1} = \v{v}_t, &&& t =1, \dots, T\\
& &&\v{v}_t - \v{v}_{t-1} = \v{a}_t, &&& t =1, \dots, T\\
& && \v{x}_{T_k} = \v{p}_k, &&& k = 1, \dots, N\\
& && \v{v}_0 = \v{0},\\
& && \v{x}^L \leq \v{x}_t \leq \v{x}^U, &&& t =0, \dots, T\\
& &&\v{x}_t \in\mathbb{R}^D, \v{v}_t \in\mathbb{R}^D, \v{a}_t \in\mathbb{R}^D,  &&& t =0, \dots, T
\end{aligned}
\end{equation}
where $\v{x}_t$ is the position coordinates of the ball at time $t$, $\v{v}_t$ the velocity, and $\v{a}_t$ the acceleration. ${T_k}$ is the time step at which the ball has to be at the given point $\v{p}_k$. Furthermore, $\v{x}^L$ and $\v{x}^U$ represent box constraints on the search domain as in \eqref{eq:generic_continuous_box_constrained_bbo_problem}. Note that~\eqref{eq:QP} is based on a simple forward difference to discretize the underlying control problem. For simplicity, we assume the time points $T_1,\dots,T_N$ are equally distributed. By adjusting the number of time steps $T$, we can obtain a grid of desired precision for the LineWalker algorithm. Linear and convex constraints in the black box problem can easily be integrated into \eqref{eq:QP}, forcing the entire curve to satisfy the constraints. Our framework can, therefore, easily deal with linear and convex constraints, although we only consider box constraints in this paper.

Problem~\eqref{eq:QP} is a convex quadratic programming (QP) problem that can be solved in polynomial time \cite{nesterov1994interior} and there are several efficient off-the-shelf software for solving such. We simply used the Matlab solver \texttt{quadprog} to solve our instances, and we were easily able to obtain 16D curves with 5000 points. In fact, solving the QP problems was not a limiting factor in the numerical study. From experiments, we found it favorable to include a small penalty on the curves total length, and we ended up using 
\begin{equation}
    \sum_{t=0}^T 	\lVert \v{a}_t \rVert^2_2  +0.001\sum_{t=0}^{T-1}  \lVert \v{x}_{t+1} -\v{x}_{t} \rVert^2_2,  
\end{equation}
as the objective function in \eqref{eq:QP}. The penalty on the curve's length prevents the curve from taking elongated turns away from the elite solutions.

\textbf{Example 1}: As an illustrative example, consider the task of finding a smooth two-dimensional curve that follows the path from (0,0) to (3,1), back to (0,0), then to (3,3), and returns to the starting point at (0,0). The curves obtained by solving problem \eqref{eq:QP} using 80 and 300 points are shown in Figure~\ref{fig:example_curves1}. The curves in Figure~\ref{fig:example_curves1} also show a welcomed side effect of generating discrete samples on a curve by solving \eqref{eq:QP}. Due to the  ``momentum of the ball'', we end up with more samples and a finer grid close to the given points, i.e., the elite solutions. As these are points of high interest, it is desirable to have a finer discretization close to these. 

\begin{figure} [h!]
\begin{subfigure}[b]{0.40\textwidth}
\includegraphics[width=\textwidth]{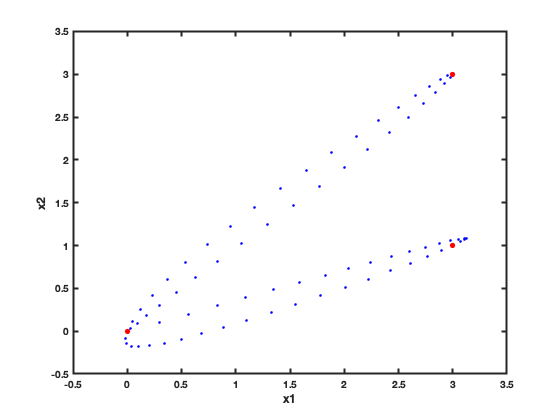}
\caption{80 point curve}
\label{fig:80_point_curve}
\end{subfigure}
\begin{subfigure}[b]{0.40\textwidth}
\includegraphics[width=\textwidth]{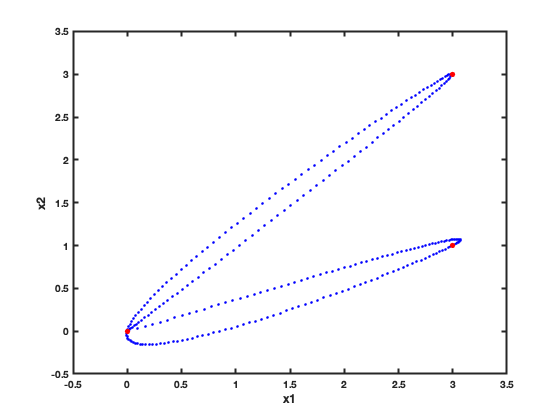}
\caption{300 point curve}
\label{fig:fig:80_point_curve}
\end{subfigure}
\caption{Illustration of the discrete points on smooth curves obtained by solving \eqref{eq:QP} using a different number of time steps. Observe that the coarse grid is only used for illustration purposes; for solution polishing, we use a significantly finer grid for the curve.}
\label{fig:example_curves1}
\end{figure}

By using the discrete points along the curve, obtained by solving \eqref{eq:QP}, we can apply the LineWalker algorithm to optimize along the curve. Next we propose two techniques for constructing the path, or sequence, through the elite solutions.  

\subsection{Curve search algorithm}
Given a set of elite solutions, we can construct different curves by solving \eqref{eq:QP} and changing the order in which the curve passes through the points. Following the path relinking philosophy, we want the curve to consider different ways of combining the points and simultaneously avoid creating overly complex curves that render minimization difficult. Below, we propose two approaches for constructing the curve.

Given a set of elite solutions, we first sort the solutions in increasing order according to their function value such that the most promising is first followed by the second most promising. If two solutions are equally good, we can arbitrarily pick one of them first. Starting from the most promising solution, we generate a curve that goes out to the second most promising point and back to the most promising point. The curve follows this pattern and visits all the elite solutions by going back and forth to the most promising solution.  We refer to the strategy as the ``\textbf{Multipoint curve}''. Our main motivations behind this approach are: I) the curve explores several directions from the most promising solution, and II) we are allowed to reuse some function evaluations as the most promising point appears at several discrete points on the curve. Figure~\ref{fig:example_curves} depicts an example of a four-dimensional multipoint curve projected down to three dimensions for illustrative purposes. The multipoint curve in Figure~\ref{fig:boha4D_multipoint_D234} passes through five elite solutions.  The procedure for generating the multipoint curve is presented as pseudocode in Algorithm~\ref{algo:generate_mp_curve}.

\begin{algorithm} 
\caption{\texttt{generateMultipointCurve($\mc{S}, N_{\text{between}}$)}: Generate a set $\mc{X}_{\text{grid}}$ of points on a smooth curve that goes though the elite solutions $\mc{S}$ with $N_{\text{between}}$ grid points in between the elite solutions.}
\label{algo:generate_mp_curve}
\begin{algorithmic}[1]
\State $\mc{S} \gets \texttt{Sort}(\mc{S}) $\Comment{Sort elite solutions in increasing order as per obj. function value}
\State $i = 1 $\Comment{Counter for points on curve}
\State $ \v{p}_0 = \mc{S}(1)$, $T_0 = 0$  \Comment{Start curve at most promising elite solution}
         \For{$k  = 1$ to $|\mc{S}| $}                    
        \State $\v{p}_i = \mc{S}(k)$, $T_i= T_{i-1} + N_{\text{between}}$  \Comment{Next elite solution to pass curve through}
         \State  $i = i + 1$
         \State $\v{p}_i = \mc{S}(0)$, $T_i= T_{i-1} + N_{\text{between}}$  \Comment{Pass curve back through the best point}
          \State $i = i + 1$
    \EndFor
   \State $\mc{X}_{\text{grid}} \gets$ minimizer $\{\v{x}_t^*\}_{t=0}^T$ of \eqref{eq:QP} using points $\v{p}_i$, time steps $T_i$, $T~=~2|\mc{S}|N_{\text{between}}$
\State \textbf{return} $\mc{X}_{\text{grid}}$ \Comment{Return the set of curve points}
\end{algorithmic}
\end{algorithm}

In the second approach, we base the curve on the most promising of the elite solutions. This curve is constructed by going back and forth from the most promising solution through all points that are obtained by taking a unit step in each direction from the most promising solution. We take a unit step both in a positive and negative direction in each dimension. If the unit step in some direction does not satisfy the box constraints we simply reduce the step length to stay within the box. We refer to this strategy as the ``\textbf{Propeller curve}'', due to its resemblance of a propeller in the two dimensional case. A three dimensional illustration of a propeller curve is also shown in Figure~\ref{fig:example_curves}. This approach creates a curve that explores several directions from the best elite solution, and focuses the search more towards the most promising point compared to the multipoint curve. In this approach we are also able to exploit that the most promising elite solution appears at multiple points along the curve and can all be obtained by a single function evaluation. By the propeller curve we are also able to consider the situation where we only have a single known solution that we wish to improve by polishing. The procedure for generating the propeller curve is presented as pseudocode in Algorithm~\ref{algo:generate_p_curve}.

\begin{algorithm} 
\caption{\texttt{generatePropellerCurve($\v{x}_{\text{elite}}, N_{\text{between}}$)}: Generate a set $\mc{X}_{\text{grid}}$ of points on a smooth curve that goes back and forth through an elite solution $\v{x}_{\text{elite}} \in \Re^D$ with $2N_{\text{between}}$ grid points in between the elite solution.}
\label{algo:generate_p_curve}
\begin{algorithmic}[1]
\State $i = 1 $\Comment{Counter for points on curve}
\State $ p_0 = \v{x}_{\text{elite}}$, $T_0 = 0$  \Comment{Start curve on the elite solution}
         \For{$k  = 1$ to $D$}                    
        \State $ p_i =  p_0 +\v{e}_k$ \Comment{Positive unit step. $\v{e}_k$ is a zero vector with 1 at position $k$}
        \State  $i = i + 1$
         \State $ p_i = p_0, T_i= T_{i-1} + N_{\text{between}}$  \Comment{Pass curve back through the best point}
                \State  $i = i + 1$
         \State $ p_i =  p_0 -\v{e}_k$ \Comment{Negative unit step}
        \State  $i = i + 1$
         \State $ p_i = p_0, T_i= T_{i-1} + N_{\text{between}}$  \Comment{Pass curve back through the best point}
          \State  $i = i + 1$
    \EndFor
   \State $\mc{X}_{\text{grid}} \gets$ minimizer $\{\v{x}_t^*\}_{t=0}^T$ of problem \eqref{eq:QP} with points $p_i$, time steps $T_i$, and $T~= ~4DN_{\text{between}}$.
\State \textbf{return} $\mc{X}_{\text{grid}}$ \Comment{Return the set of curve points}
\end{algorithmic}
\end{algorithm}

Once we have obtained the curve, we simply apply the LineWalker algorithm to minimize the black-box objective along the curve. Minimizing the objective along the curve is obviously a restrictive search, but keep in mind the goal is to improve upon some elite solutions using only a small number of function evaluations. There are numerous way of constructing curves to optimize, and we mainly motivate our proposed multipoint and propeller curves by I) an exploration of elite solutions in a path relinking fashion, II) a local search around the incumbent solution, III) a smooth curve to not introduce non-smoothness into the 1-dimensional search, and IV) an efficient utilization of function evaluations by ``reusing'' some points. In Section~\ref{sec:numerical_experiments} we show empirical evidence to support the claim that elite solutions can efficiently be improved by optimizing along these curves. 

For clarity we also present a pseudocode for the solution by optimizing along the curves in Algorithm~\ref{algo:curve_polish}. In the algorithm we allow LineWalker to use $N_{\text{given}}$ additional function evaluations, as we start the algorithm by  $N_{\text{given}}$ known points, i.e., the elite solutions along the curve.

\begin{algorithm} 
\caption{\texttt{CurvePolisher($\mc{S}, \text{fval}_{\text{max}}, \text{strategy}, N_{\text{between}}$)}: Perform solution polishing on a set $\mc{S}$ of elite solution(s) using LineWalker to conduct a 1D curve search allowing at most $\texttt{fval}_{\text{max}}$ function evaluations. The parameters \texttt{strategy} and $N_{\text{between}}$ govern the curve and the number of grid points between elite solutions.}
\label{algo:curve_polish}
\begin{algorithmic}[1]
\If{strategy = multipoint} 
    \State $\mc{X}_{\text{grid}} \gets$ \texttt{generateMultipointCurve($\mc{S}, N_{\text{between}}$)}:
    \State $N_{\text{given}} = 2|\mc{S}| - 1$ \Comment{Number of known points along the curve}
    \ElsIf{strategy = propeller}
      \State $\v{x}_{\text{elite}} \gets \arg\min_{\v{x} \in \mc{S}} f(\v{x})$  \Comment{Choose elite solution with best objective value}
      \State  $\mc{X}_{\text{grid}} \gets$ \texttt{generatePropellerCurve($\v{x}_{\text{elite}}, N_{\text{between}}$)}
       \State $N_{\text{given}} = 3+2(dim(\v{x}_{\text{elite}})-1)$ \Comment{Depends on dimensionality of $\v{x}_{\text{elite}}$ }
\EndIf 
\State $ \v{x}^* \gets$   \texttt{LineWalker}($\mc{X}_{\text{grid}}, \text{fval}_{\text{max}}+|\mc{S}|$)\Comment{Optimize along the curve with function evaluation budget $\text{fval}_{\text{max}}+N_{\text{given}} $}
 \State \textbf{return} $\v{x}^*$ \Comment{Return polished solution}
\end{algorithmic}
\end{algorithm}

\textbf{Example 2}: To illustrate how Algorithms~\ref{algo:generate_mp_curve}--\ref{algo:curve_polish} work in practice, we consider the solution polishing task for the instance 4D Boha. More details on this optimization instance is given in Section~\ref{sec:numerical_experiments}.  
Figure~\ref{fig:example_curves} depicts examples of the multipoint and propeller curves.  Algorithm~\ref{algo:generate_mp_curve} generates the multipoint curve in Figure~\ref{fig:boha4D_multipoint_D234} passing through five elite solutions, while introducing 400 grid indices between each solution. Algorithm~\ref{algo:generate_p_curve} generates the propeller-shaped curve in Figure~\ref{fig:boha4D_propeller_D234} around the best (i.e., the one with the lowest objective function value) of the five elite solutions used in Figure~\ref{fig:boha4D_multipoint_D234}. It introduces 200 grid indices between each point. Both methods generate a grid with 3201 indices.

\begin{figure} [h!]
\begin{subfigure}[b]{0.49\textwidth}
\includegraphics[width=\textwidth]{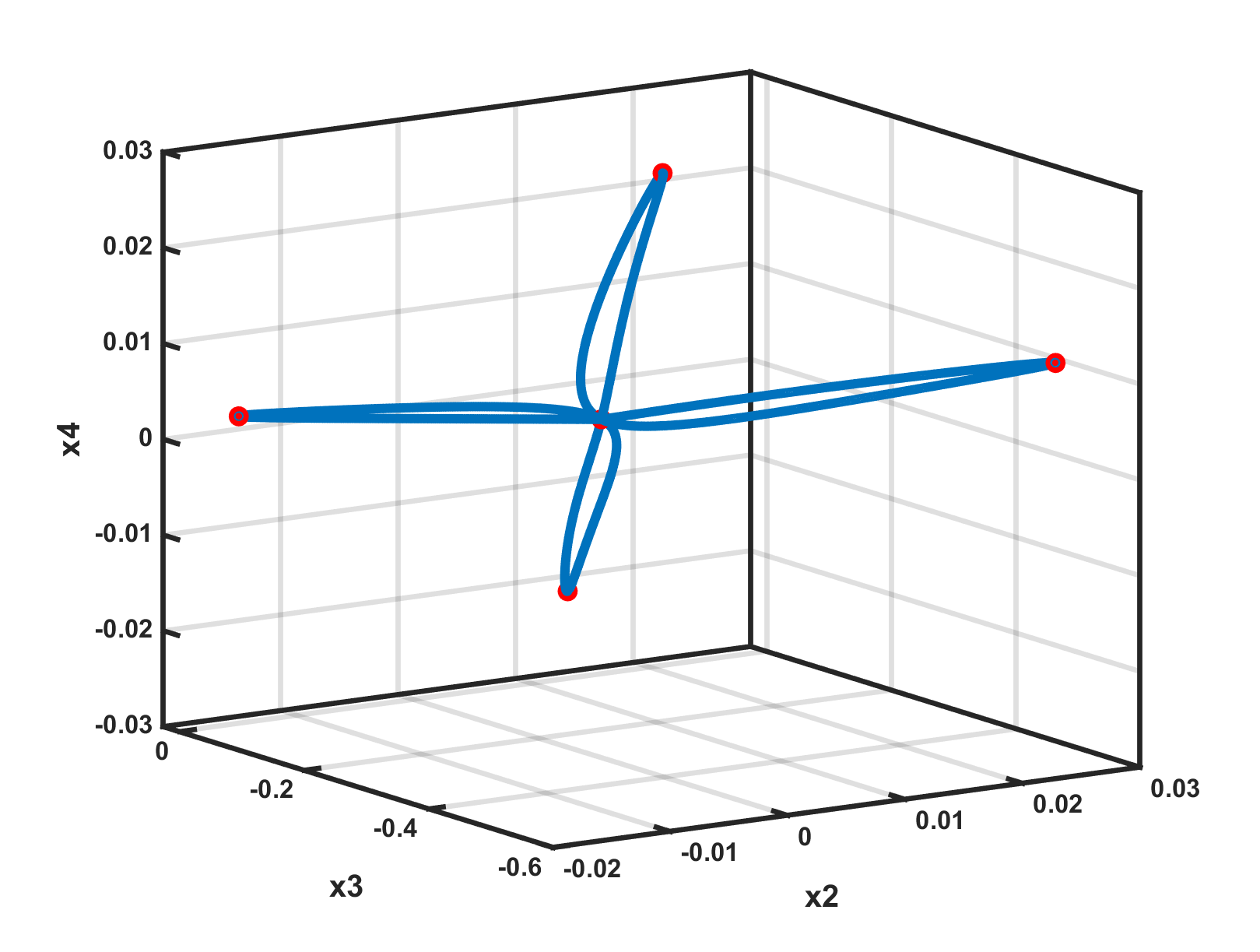}
\caption{Multipoint curve}
\label{fig:boha4D_multipoint_D234}
\end{subfigure}
\begin{subfigure}[b]{0.49\textwidth}
\includegraphics[width=\textwidth]{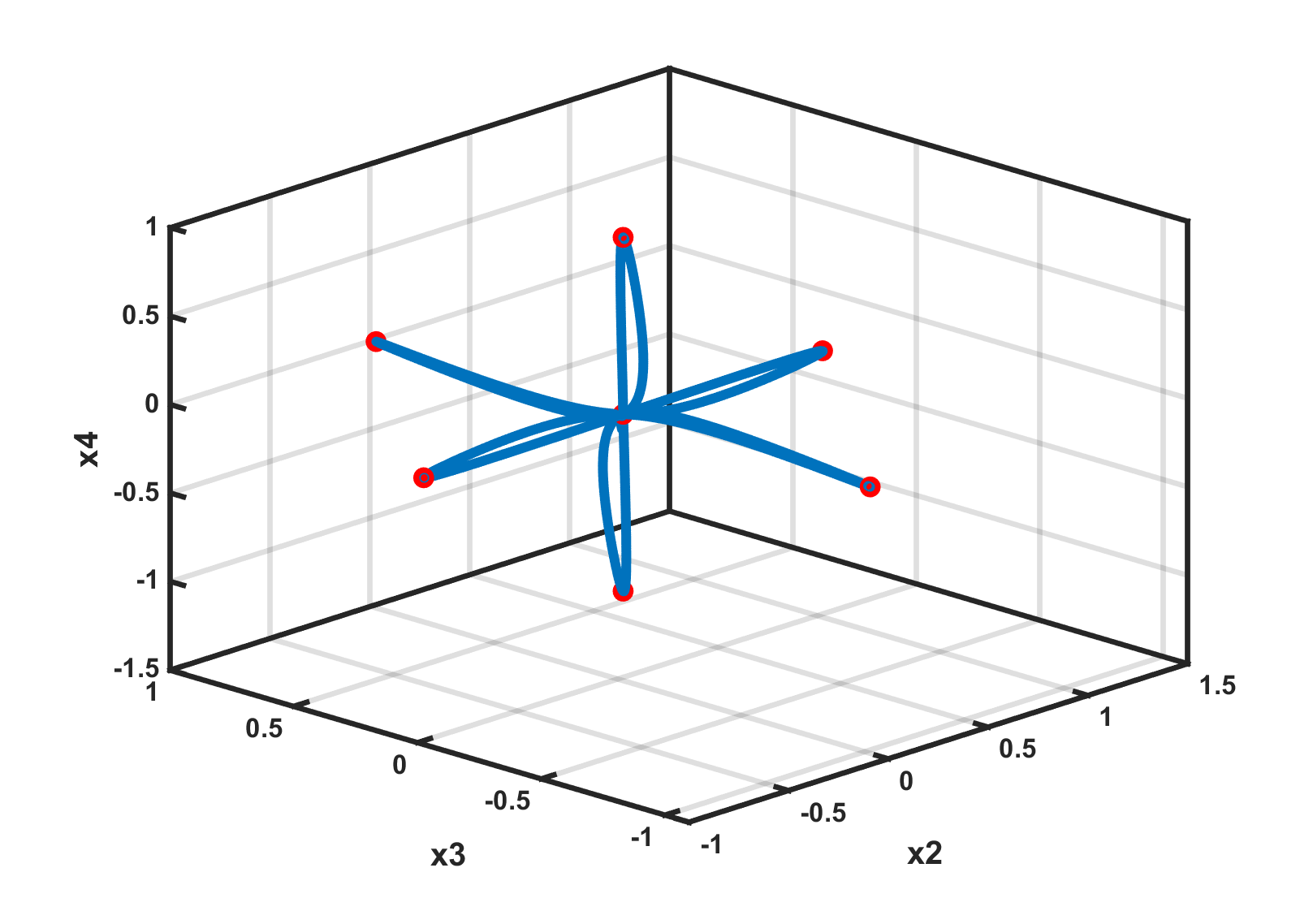}
\caption{Propeller curve}
\label{fig:boha4D_propeller_D234}
\end{subfigure}
\caption{Examples of curves generated for the 4D Boha instance.  Only dimensions 2, 3, and 4 are shown.}
\label{fig:example_curves}
\end{figure}

Figure~\ref{fig:linewalker_curves} shows LineWalker's ability to find an improving solution on the generated curves shown in Figure~\ref{fig:example_curves}. After creating a discrete grid on indices, LineWalker judiciously samples grid indices, which correspond to solutions, to simultaneously find a global optimum and construct a surrogate. In Figure~\ref{fig:linewalker_curves}, a grid of 3201 grid indices is used and rougly 200 indices are actually sampled.  Since optimality trumps surrogate quality when performing solution polishing, we could actually use far fewer than 200 samples and still find an improving solution. In Figure~\ref{fig:linewalker_curves}, LineWalker's ``Current Fit'' is nearly identical to the ``Ground Truth,'' which would not be known in most applications. The terms ``Initial Sample,'' ``Last Sample,'' and ``Previous Sample'' refer to the initial samples passed to LineWalker, the last grid index evaluated by LineWalker, and all previous samples where LineWalker made a function evaluation.    

Figure~\ref{fig:boha4D_multipoint_lw120_with_inset} depicts function evaluations and the surrogate generated by LineWalker while exploring the ``multipoint'' curve shown in Figure~\ref{fig:boha4D_multipoint_D234}.  LineWalker's surrogate is nearly identical to the ground truth, which of course would not be available to the user in practice.  The black dots correspond to the initial samples (the initial elite solutions) in the sequence.
The inset figure clearly shows that an improving solution was found at grid index 619 with an objective function of $\num{8.61e-5}$, more than two orders of magnitude lower than the best known objective function value of $\num{1.93e-2}$. 

\begin{figure} [h!]
\begin{subfigure}[b]{0.53\textwidth}
\includegraphics[width=\textwidth]{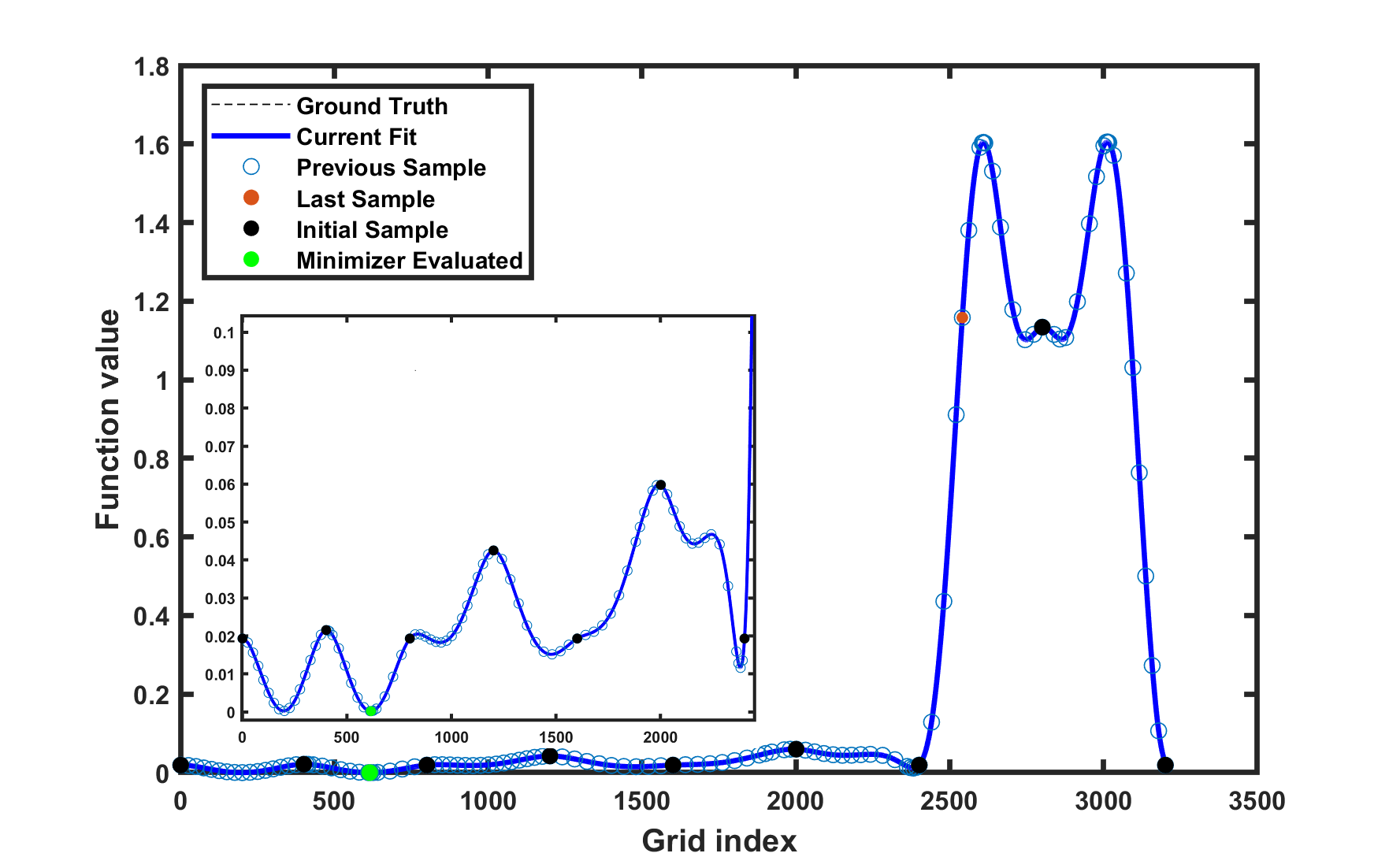}
\caption{Multipoint curve}
\label{fig:boha4D_multipoint_lw120_with_inset}
\end{subfigure}
\begin{subfigure}[b]{0.45\textwidth}
\includegraphics[width=\textwidth]{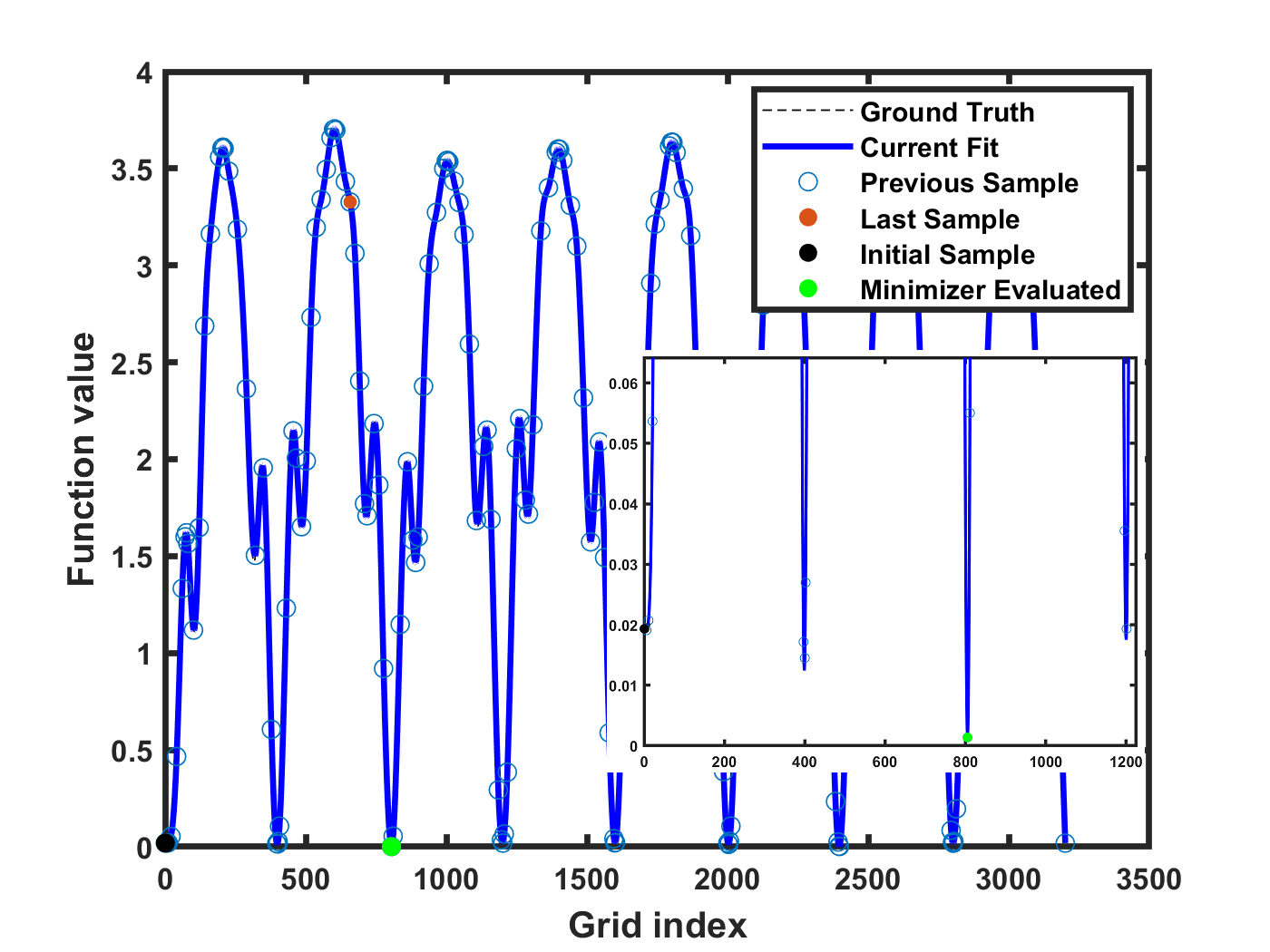}
\caption{Propeller curve}
\label{fig:boha4D_propeller_lw200_with_inset}
\end{subfigure}
\caption{LineWalker's line search performance on the 4D Boha instances in Figure~\ref{fig:example_curves}. The insets reveal that multiple improving solutions were found for both methods.}
\label{fig:linewalker_curves}
\end{figure}

Similarly, Figure~\ref{fig:boha4D_propeller_lw200_with_inset} depicts function evaluations and the surrogate generated by LineWalker while exploring the ``propeller'' curve shown in Figure~\ref{fig:boha4D_propeller_D234}. The inset zooms in on the interval where LineWalker evaluated multiple improving solutions. Specifically, the best known objective function value of $\num{1.93e-2}$ was improved an order of magnitude to $\num{1.14e-3}$. 
As a final important takeaway, the insets in both figures reveal that LineWalker is capable of finding more than one improving solution.

\section{Numerical Experiments} \label{sec:numerical_experiments}

\subsection{Experimental set up} \label{sec:numerical_setup}

\subsubsection{Test suite of functions}

We consider 19 standard test functions (available at \url{https://www.sfu.ca/~ssurjano/}, \url{https://github.com/luclaurent/optiGTest/tree/master/unConstrained}, and \cite[Figure 7]{duarte2011path}) to demonstrate our solution polishing methods. We use the terms ``test function'' and ``instance'' interchangeably throughout.
Table~\ref{table:test_fnc_table} lists the test functions along with their bounds and minimum objective function values.  While Figure~\ref{fig:test_fnc_gallery} plots these test functions in 2 dimensions, we perform solution polishing on each test function in 2, 4, 8, and 16 dimensions. 

\begin{table}[]
\caption{Benchmark test functions. The lower and upper bounds refer to the variable bounds $\v{x}^L$ and $\v{x}^U$, respectively. ``Global Minimum Objval'' refers to the known global minimum objective function value, some of which depend on the dimension $D$.}
\label{table:test_fnc_table}
\begin{tabular}{lrrc}
\toprule
\textbf{Function Name}    & \textbf{Lower Bound} & \textbf{Upper Bound} & \textbf{Global Minimum Objval} \\
\midrule
\href{http://www.sfu.ca/~ssurjano/ackley.html}{ackley}                    & -32.768     & 32.768      & 0              \\
\href{https://github.com/luclaurent/optiGTest/blob/master/unConstrained/funCosineMixture.m
}{cosineMixture}             & -1          & 1           & $-0.06301D$       \\
\href{https://github.com/luclaurent/optiGTest/blob/master/unConstrained/funDeflectedCorrugatedSpring.m}{deflectedCorrugatedSpring} & 0           & 10          & -1             \\
\href{https://www.sfu.ca/~ssurjano/dixonpr.html}{DixonPrice}                & -10         & 10          & 0              \\
\href{https://github.com/luclaurent/optiGTest/blob/master/unConstrained/funGiunta.m}{giunta}                    & -1          & 1           & $-0.26776D+0.6$       \\
\href{https://www.sfu.ca/~ssurjano/griewank.html}{griewank}                  & -600        & 600         & 0              \\
\href{http://www.sfu.ca/~ssurjano/levy.html}{levy}                      & -10         & 10          & 0              \\
\href{http://www.sfu.ca/~ssurjano/michal.html}{michal}                    & 0           & 3.14159     & See Table~\ref{tab:michal_global_minimum_objective_function_values}            \\
\href{https://github.com/luclaurent/optiGTest/blob/master/unConstrained/funPinter.m}{pinter}                    & -10         & 10          & 0              \\
\href{https://www.sfu.ca/~ssurjano/powell.html}{powell}                    & -4          & 5           & 0       \\      
\href{http://www.sfu.ca/~ssurjano/rastr.html}{rastrigin}                 & -5.12       & 5.12        & 0              \\
\href{https://www.sfu.ca/~ssurjano/rosen.html}{rosenbrock}                & -5          & 10          & 0              \\
\href{http://www.sfu.ca/~ssurjano/schwef.html}{schwefel}                    & -500        & 500         & 0              \\
\href{https://link.springer.com/article/10.1007/s00500-010-0650-7/figures/7}{boha}                      & -100        & 100         & 0              \\
\href{https://link.springer.com/article/10.1007/s00500-010-0650-7/figures/7}{shiftedSchaffer}           & -100        & 100         & 0              \\
\href{https://www.sfu.ca/~ssurjano/spheref.html}{spheref}                   & -5.12       & 5.12        & 0              \\
\href{http://www.sfu.ca/~ssurjano/stybtang.html}{stybtang}                  & -5          & 5           & $-39.1662D$        \\
\href{https://github.com/luclaurent/optiGTest/blob/master/unConstrained/funTrigonometric2.m}{trig2}                     & -500        & 500         & 1              \\
\href{http://www.sfu.ca/~ssurjano/zakharov.html}{zakharov}                  & -5          & 10          & 0              \\
\bottomrule
\end{tabular}
\end{table}

\begin{figure} [h!]
\begin{subfigure}[b]{0.24\textwidth}
\includegraphics[width=\textwidth]{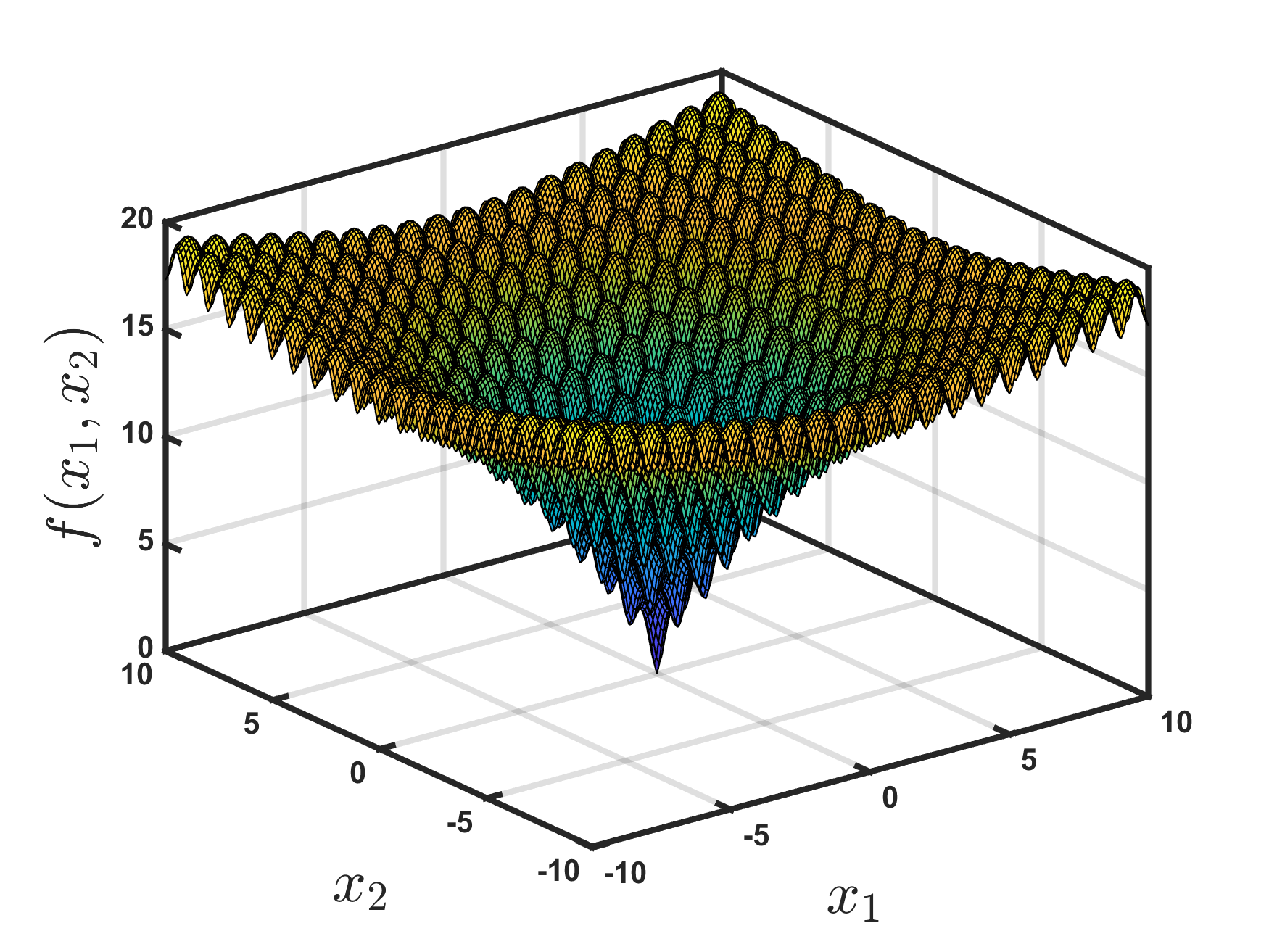}
\caption{\href{http://www.sfu.ca/~ssurjano/ackley.html}{ackley}}
\end{subfigure}
\begin{subfigure}[b]{0.24\textwidth}
\includegraphics[width=\textwidth]{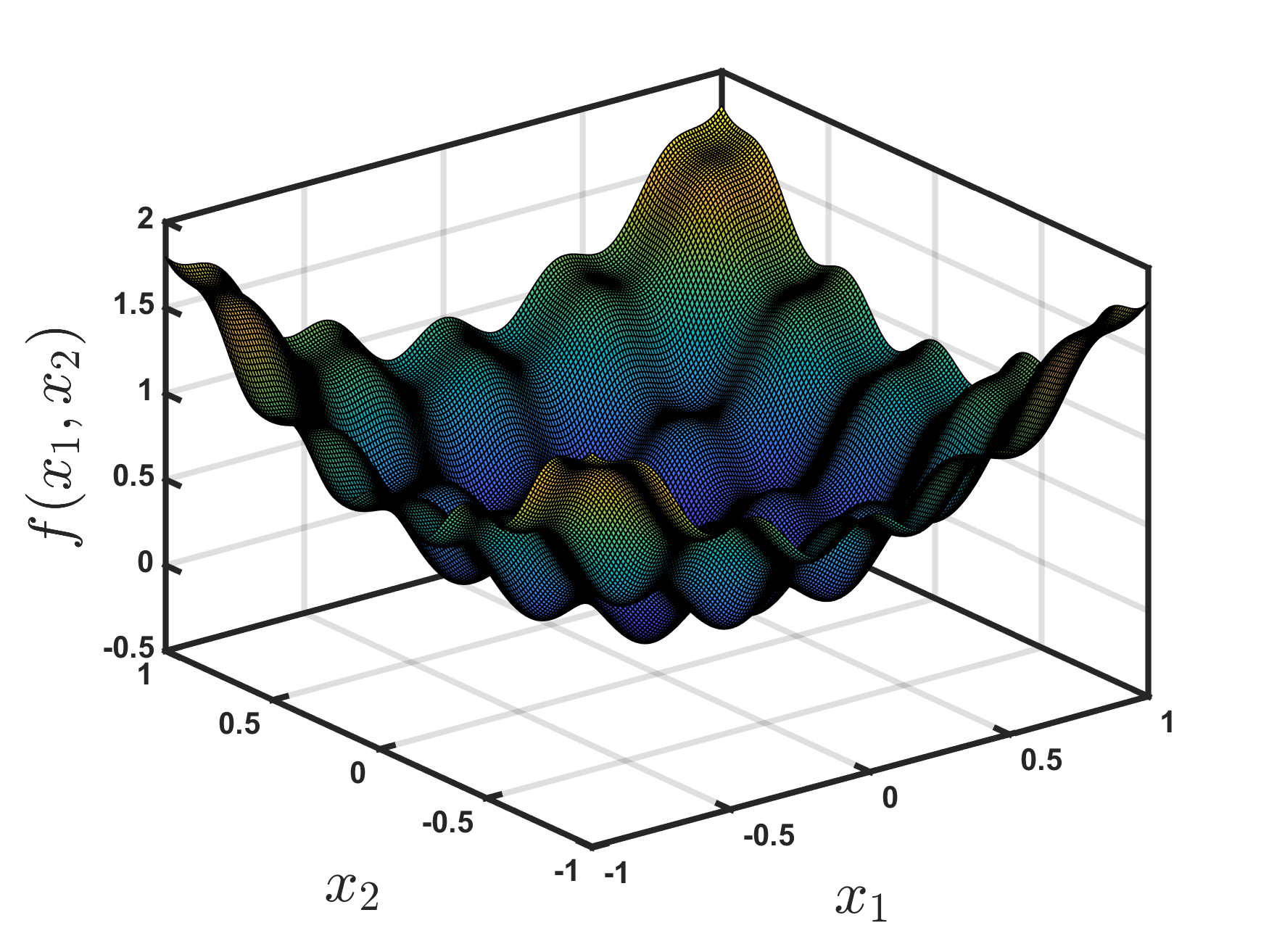}
\caption{\href{https://github.com/luclaurent/optiGTest/blob/master/unConstrained/funCosineMixture.m}{cosineMixture}}
\end{subfigure}
\begin{subfigure}[b]{0.24\textwidth}
\includegraphics[width=\textwidth]{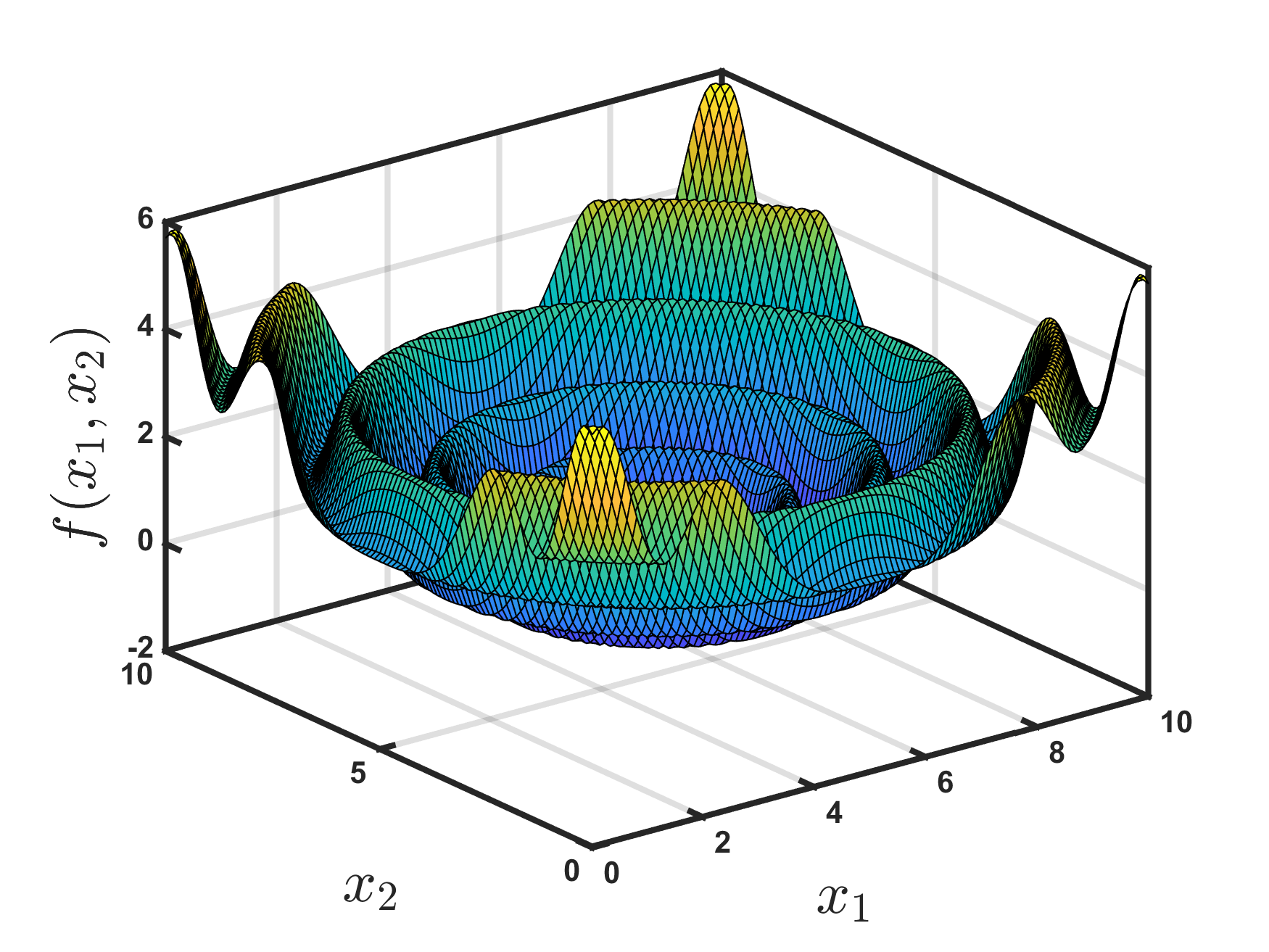}
\caption{\href{https://github.com/luclaurent/optiGTest/blob/master/unConstrained/funDeflectedCorrugatedSpring.m}{dc spring}}
\end{subfigure}
\begin{subfigure}[b]{0.24\textwidth}
\includegraphics[width=\textwidth]{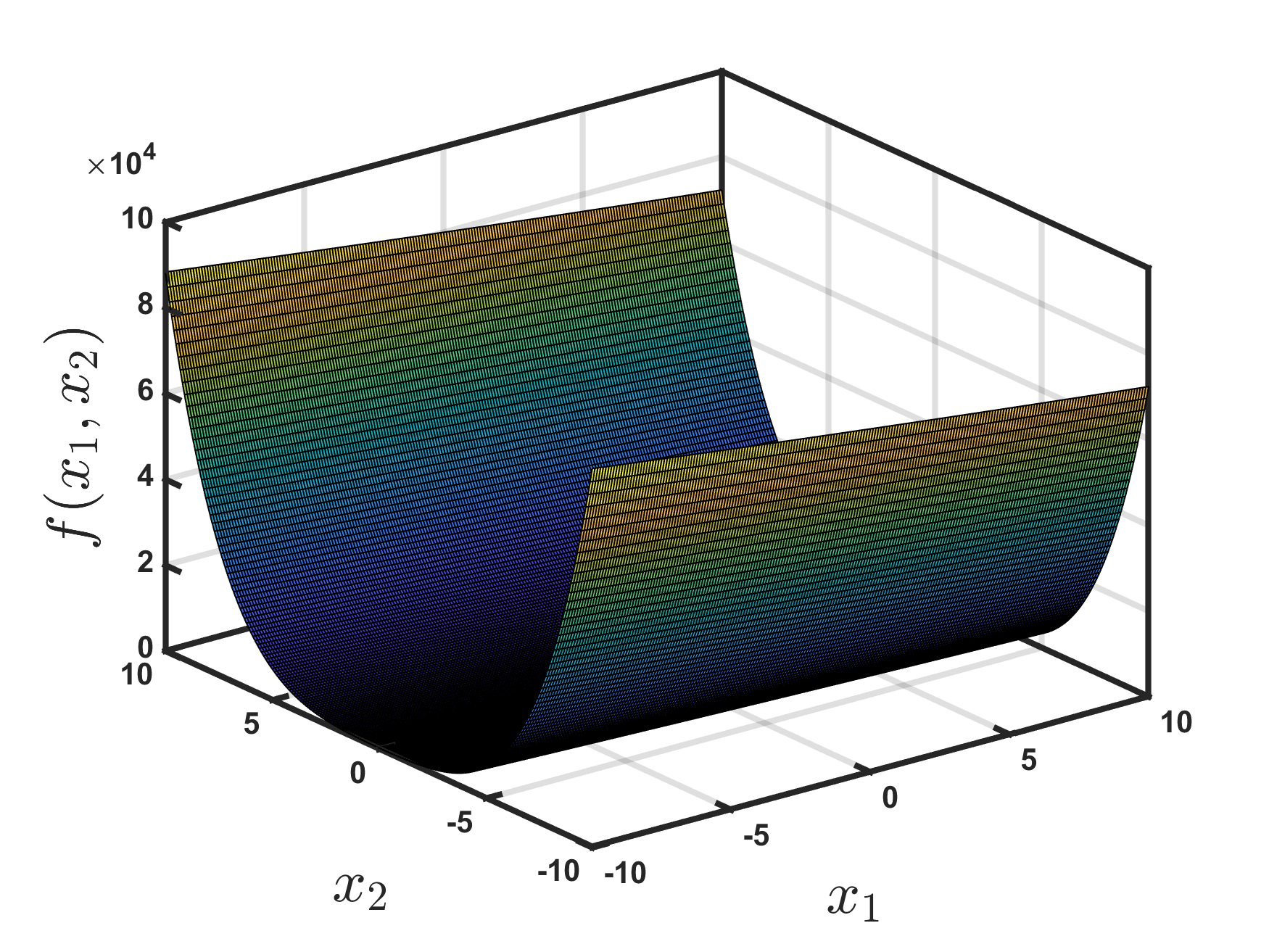}
\caption{\href{http://www.sfu.ca/~ssurjano/dixonpr.html}{dixonpr}}
\end{subfigure}
\begin{subfigure}[b]{0.24\textwidth}
\includegraphics[width=\textwidth]{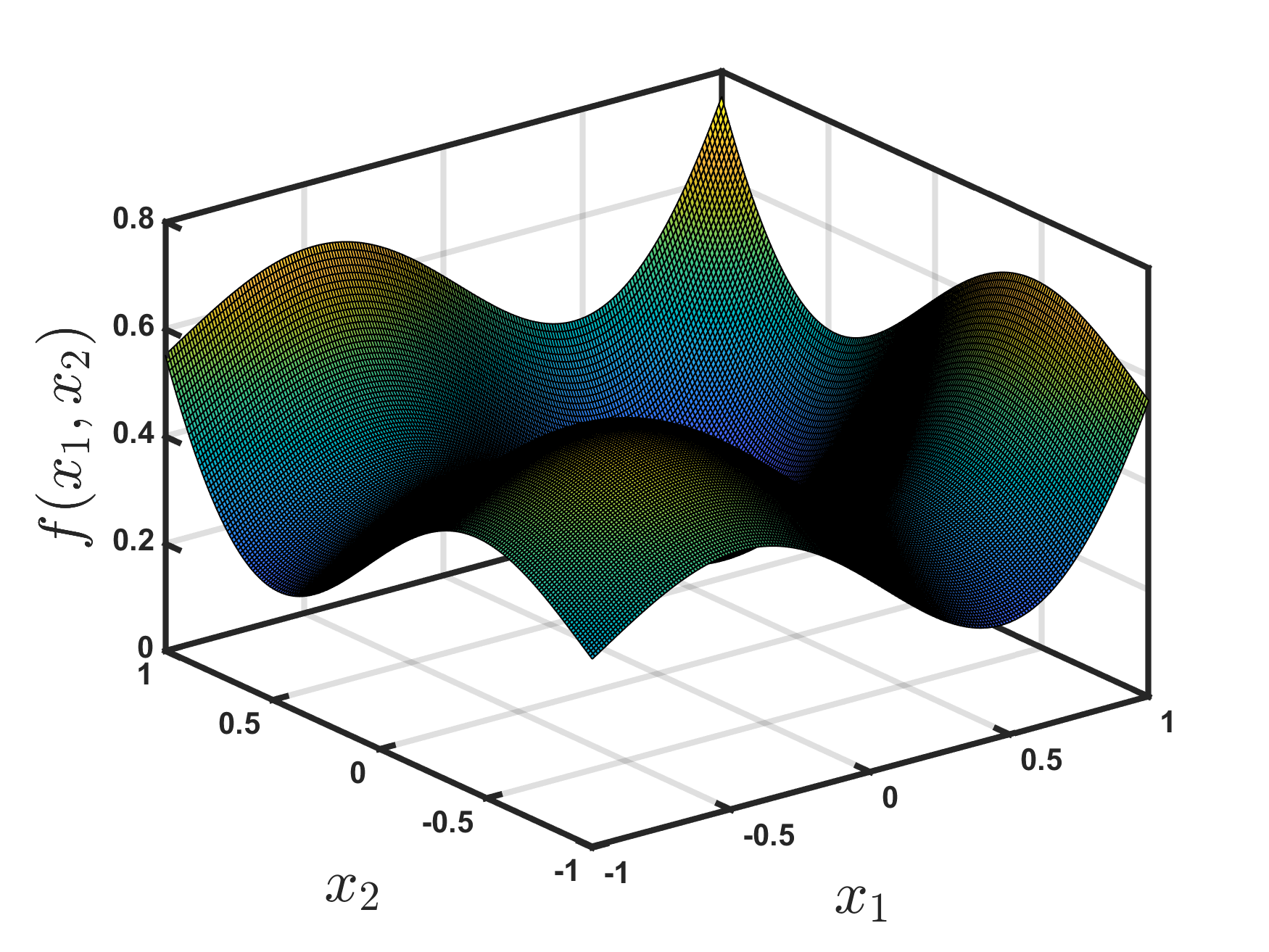}
\caption{\href{https://github.com/luclaurent/optiGTest/blob/master/unConstrained/funGiunta.m}{giunta}}
\end{subfigure}
\begin{subfigure}[b]{0.24\textwidth}
\includegraphics[width=\textwidth]{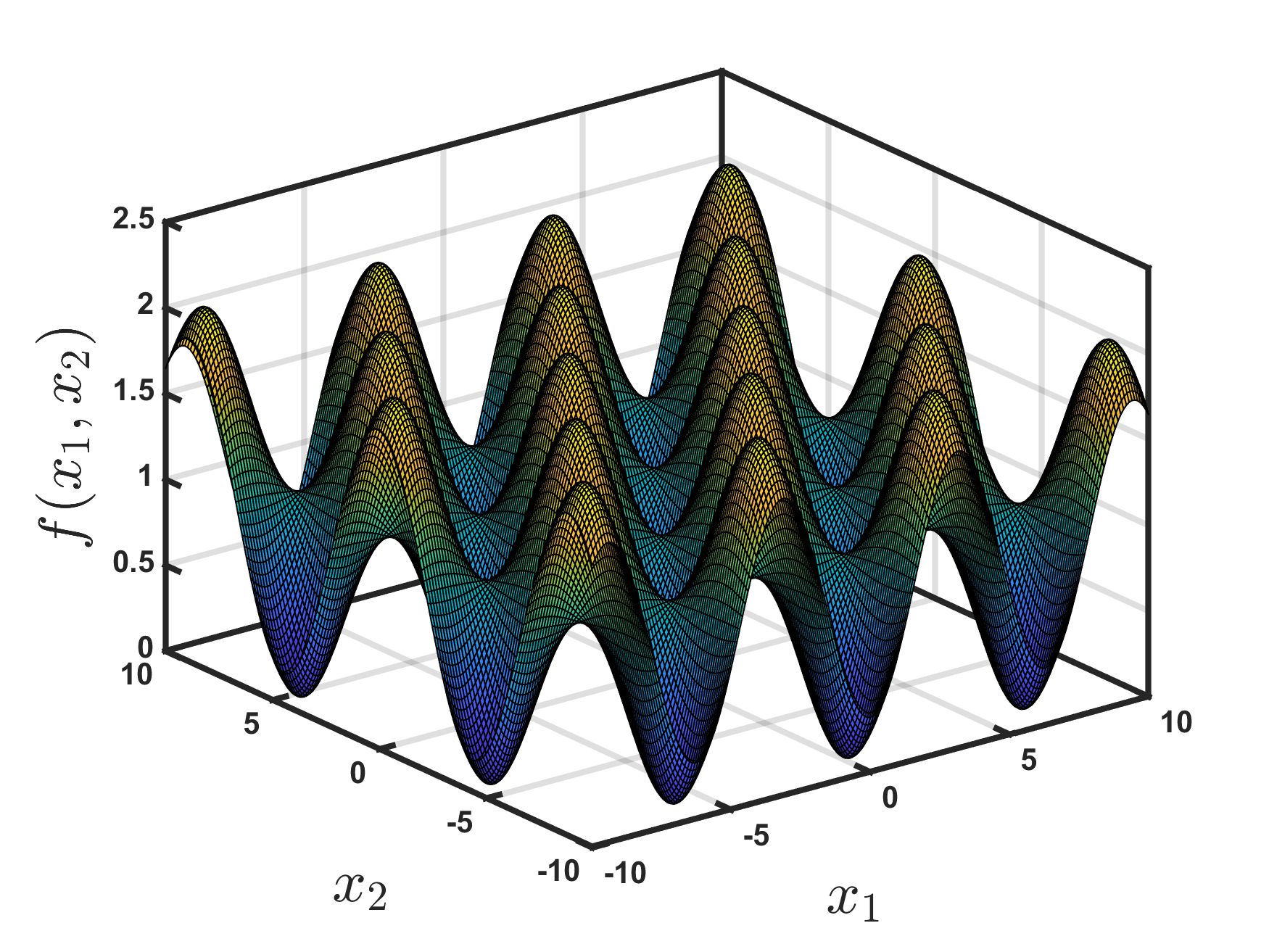}
\caption{\href{http://www.sfu.ca/~ssurjano/griewank.html}{griewank}}
\end{subfigure}
\begin{subfigure}[b]{0.24\textwidth}
\includegraphics[width=\textwidth]{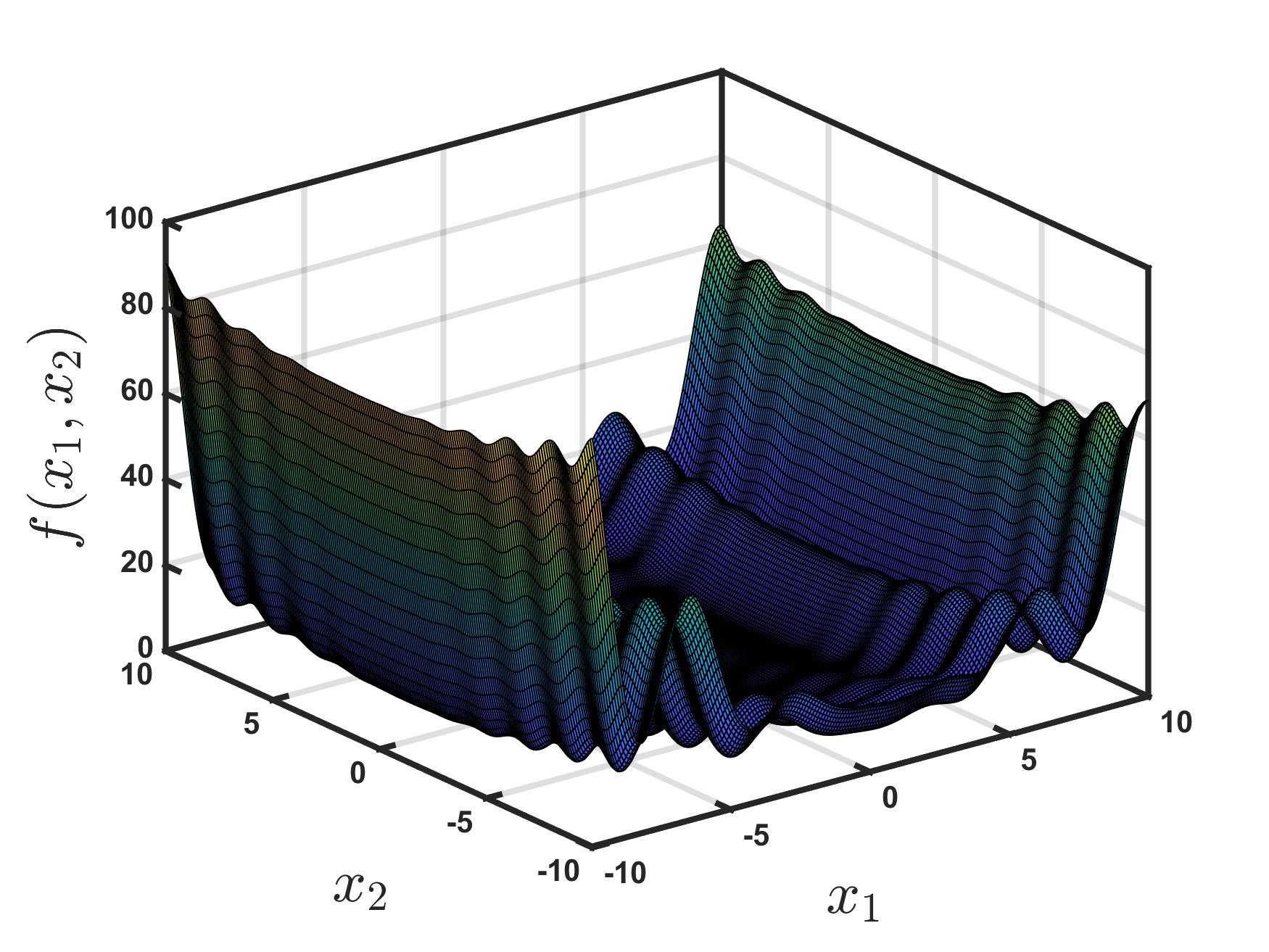}
\caption{\href{http://www.sfu.ca/~ssurjano/levy.html}{levy}}
\end{subfigure}
\begin{subfigure}[b]{0.24\textwidth}
\includegraphics[width=\textwidth]{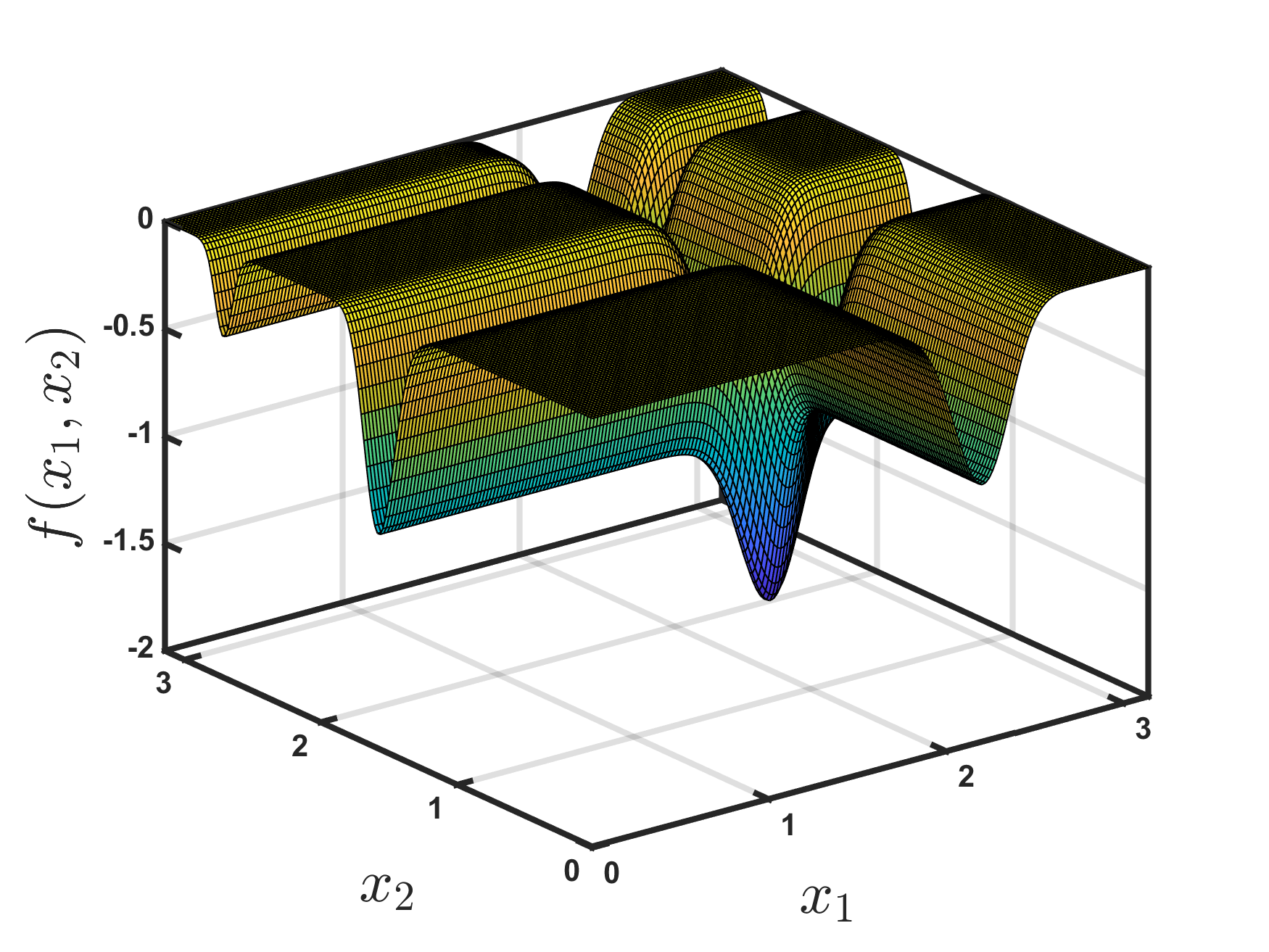}
\caption{\href{http://www.sfu.ca/~ssurjano/michal.html}{michal}}
\end{subfigure}
\begin{subfigure}[b]{0.24\textwidth}
\includegraphics[width=\textwidth]{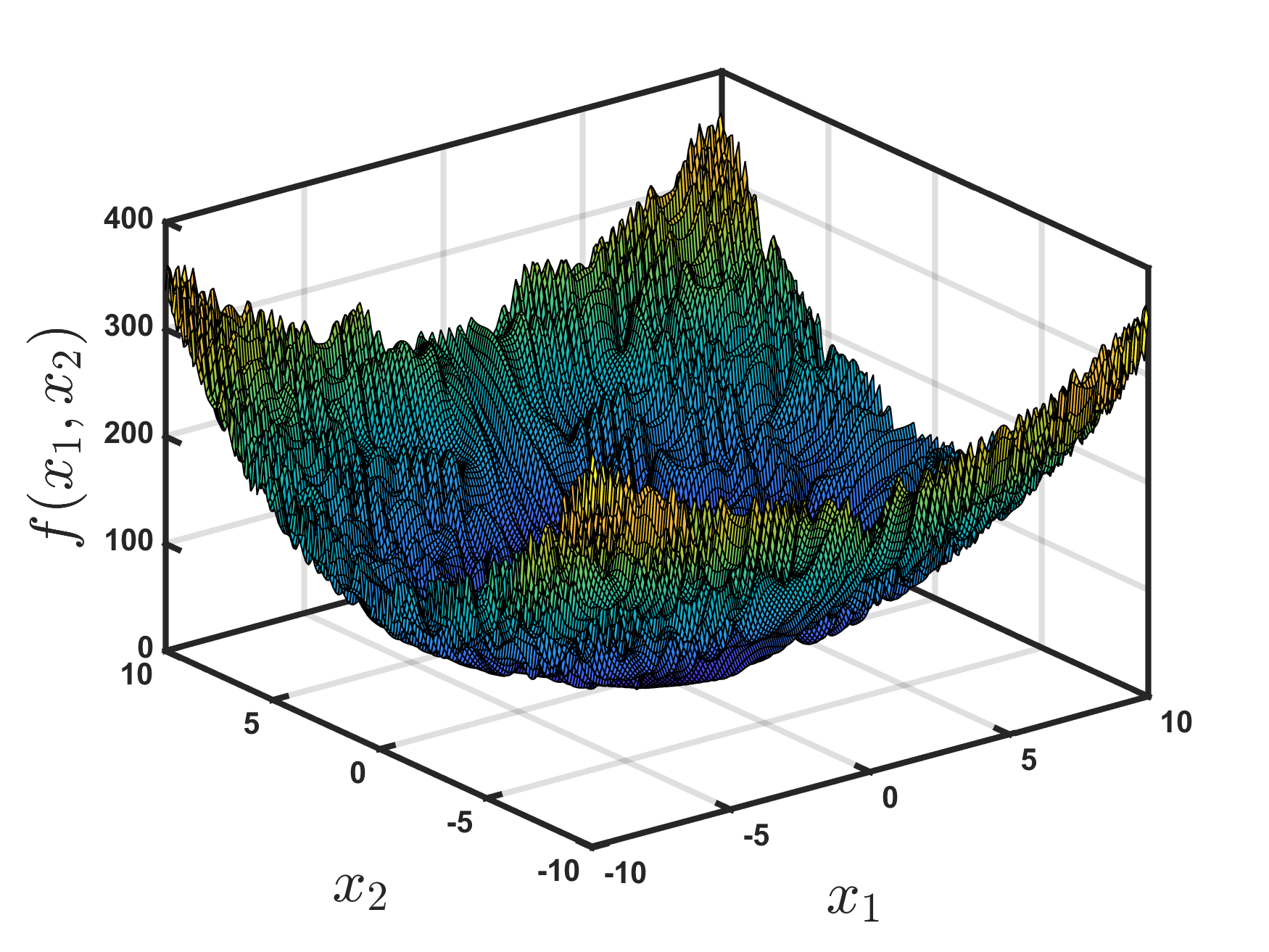}
\caption{\href{http://infinity77.net/global_optimization/test_functions_nd_P.html\#go_benchmark.Pinter}{pinter}}
\end{subfigure}
\begin{subfigure}[b]{0.24\textwidth}
\includegraphics[width=\textwidth]{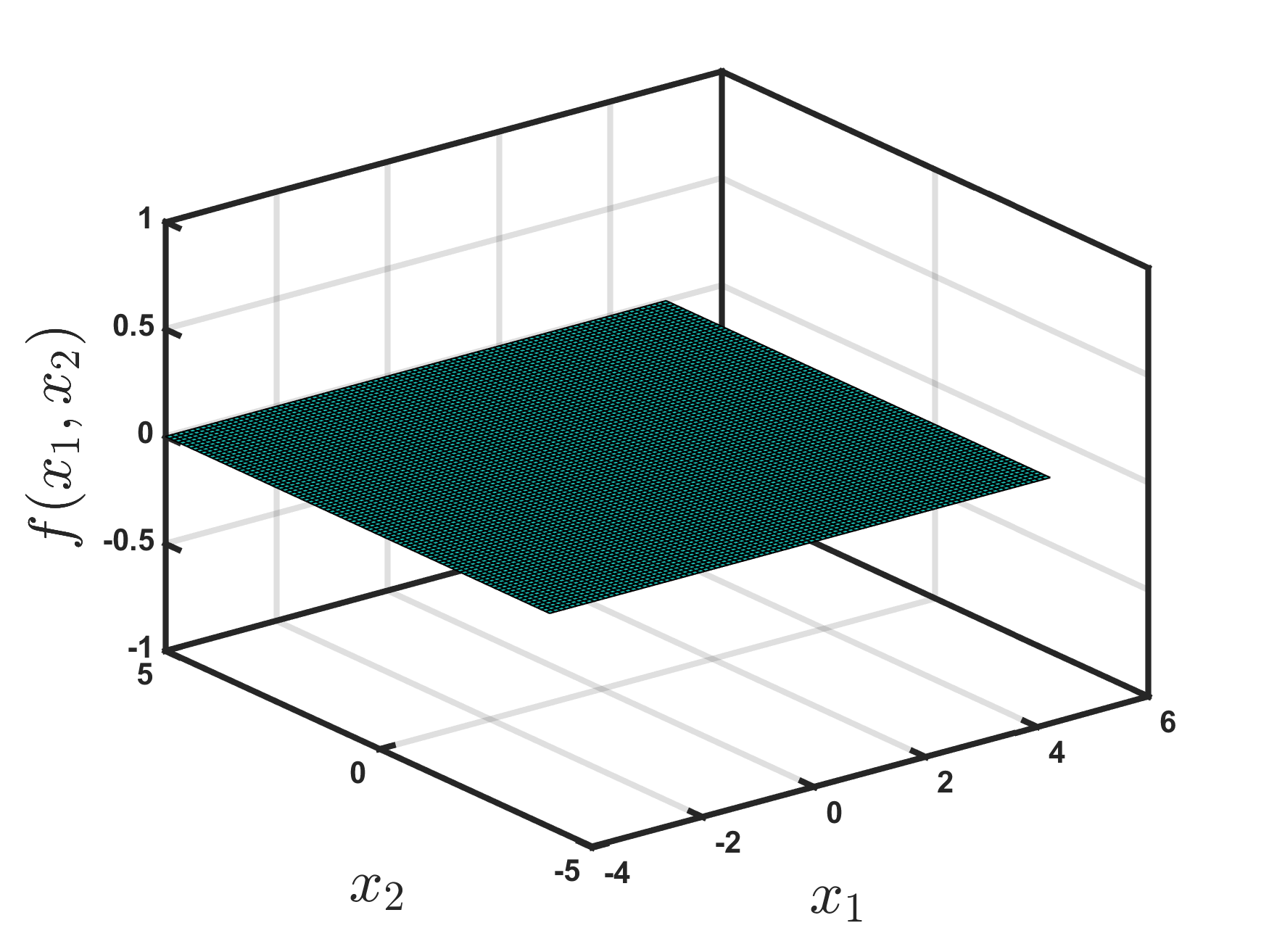}
\caption{\href{http://www.sfu.ca/~ssurjano/powell.html}{powell}}
\end{subfigure}
\begin{subfigure}[b]{0.24\textwidth}
\includegraphics[width=\textwidth]{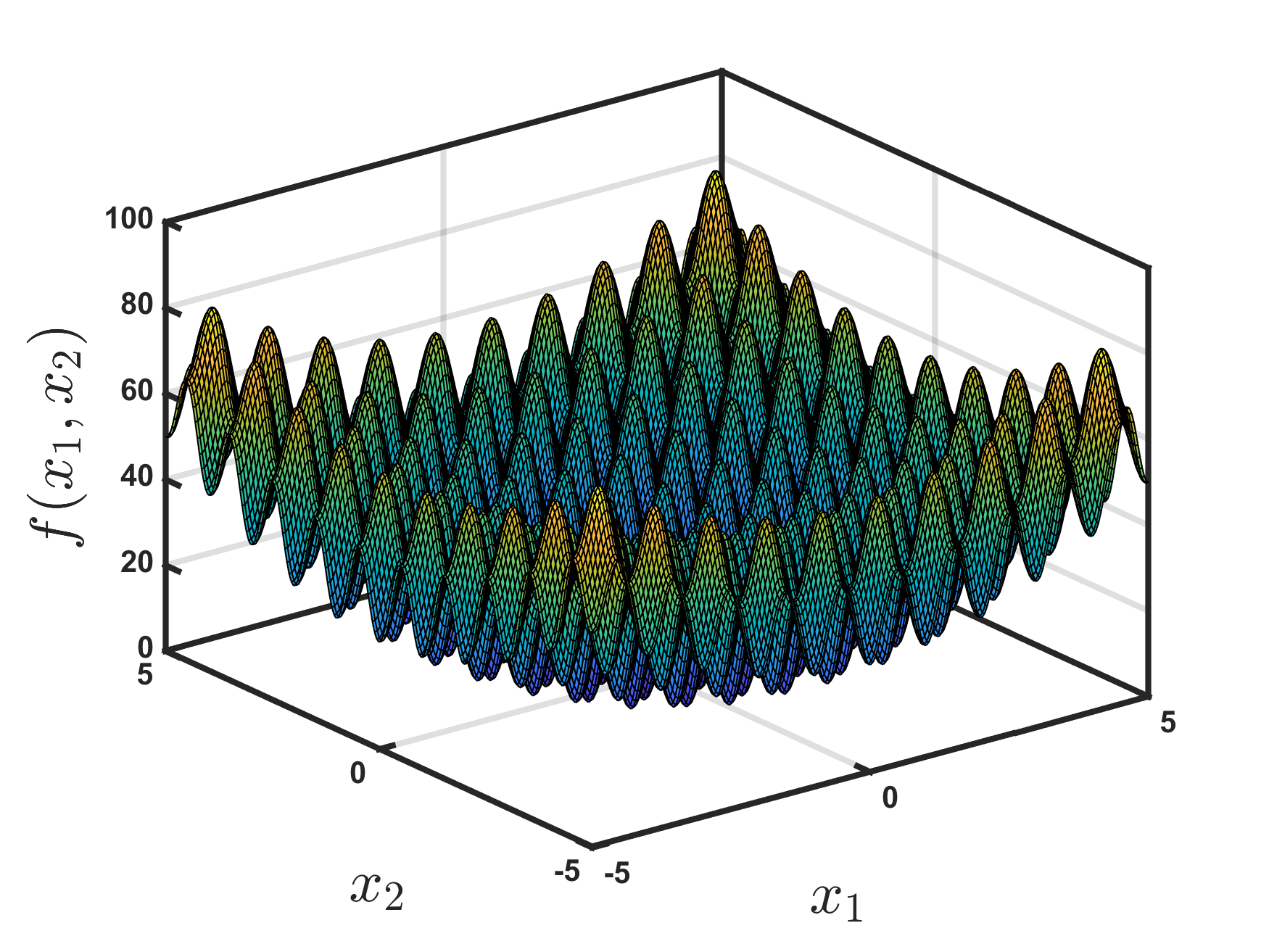}
\caption{\href{http://www.sfu.ca/~ssurjano/rastr.html}{rastr}}
\end{subfigure}
\begin{subfigure}[b]{0.24\textwidth}
\includegraphics[width=\textwidth]{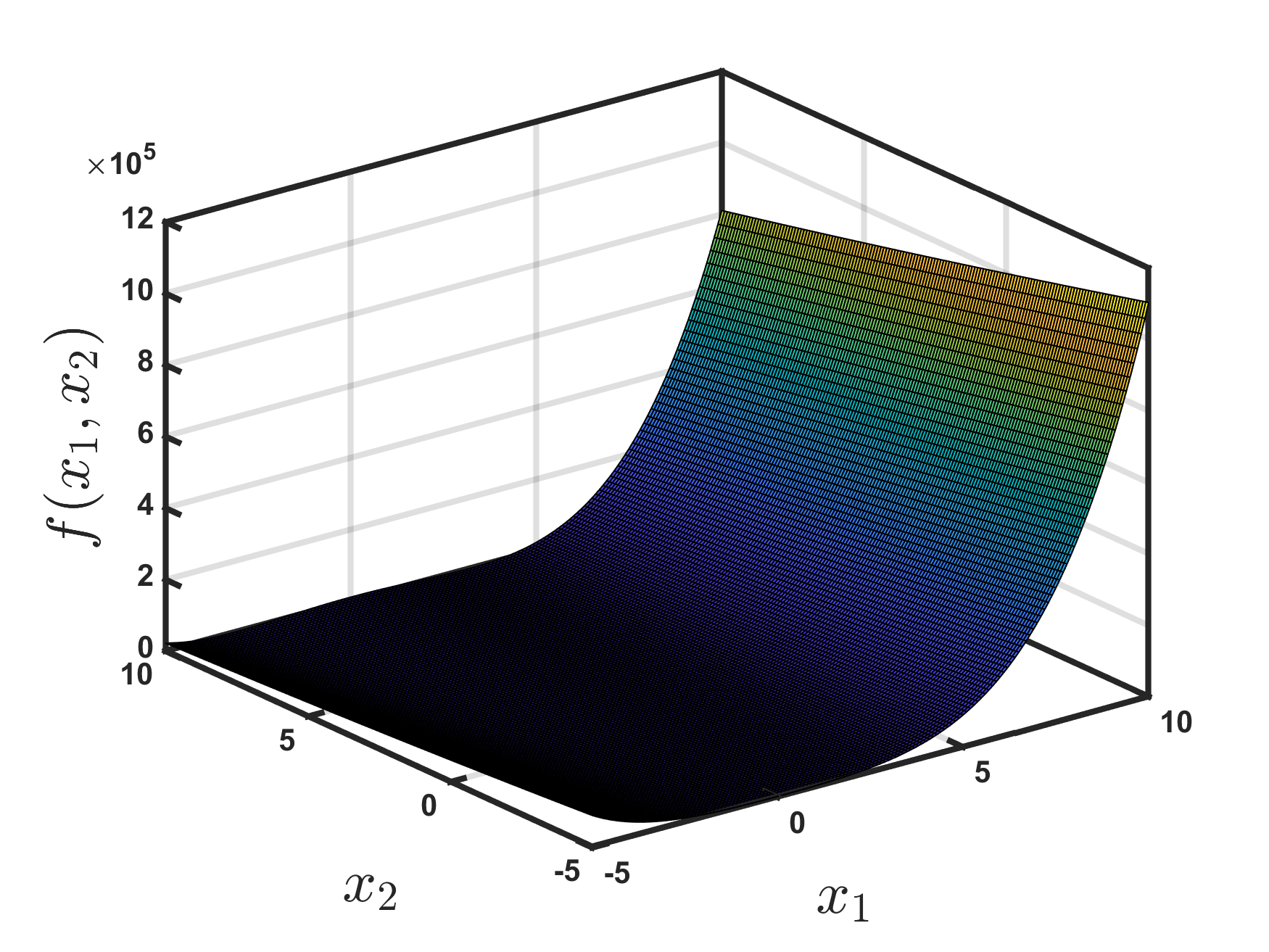}
\caption{\href{http://www.sfu.ca/~ssurjano/rosen.html}{rosen}}
\end{subfigure}
\begin{subfigure}[b]{0.24\textwidth}
\includegraphics[width=\textwidth]{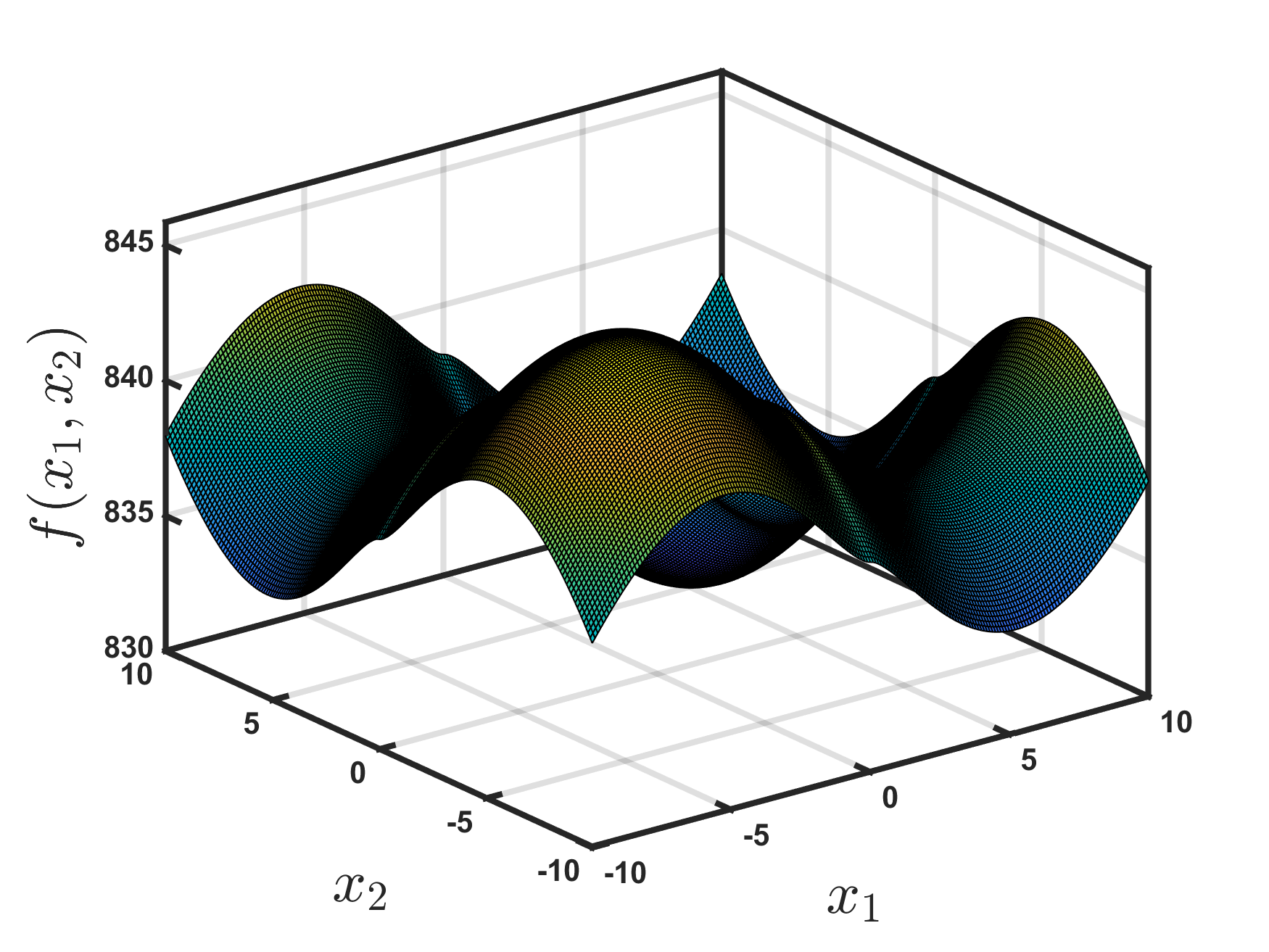}
\caption{\href{http://www.sfu.ca/~ssurjano/schwef.html}{schwef}}
\end{subfigure}
\begin{subfigure}[b]{0.24\textwidth}
\includegraphics[width=\textwidth]{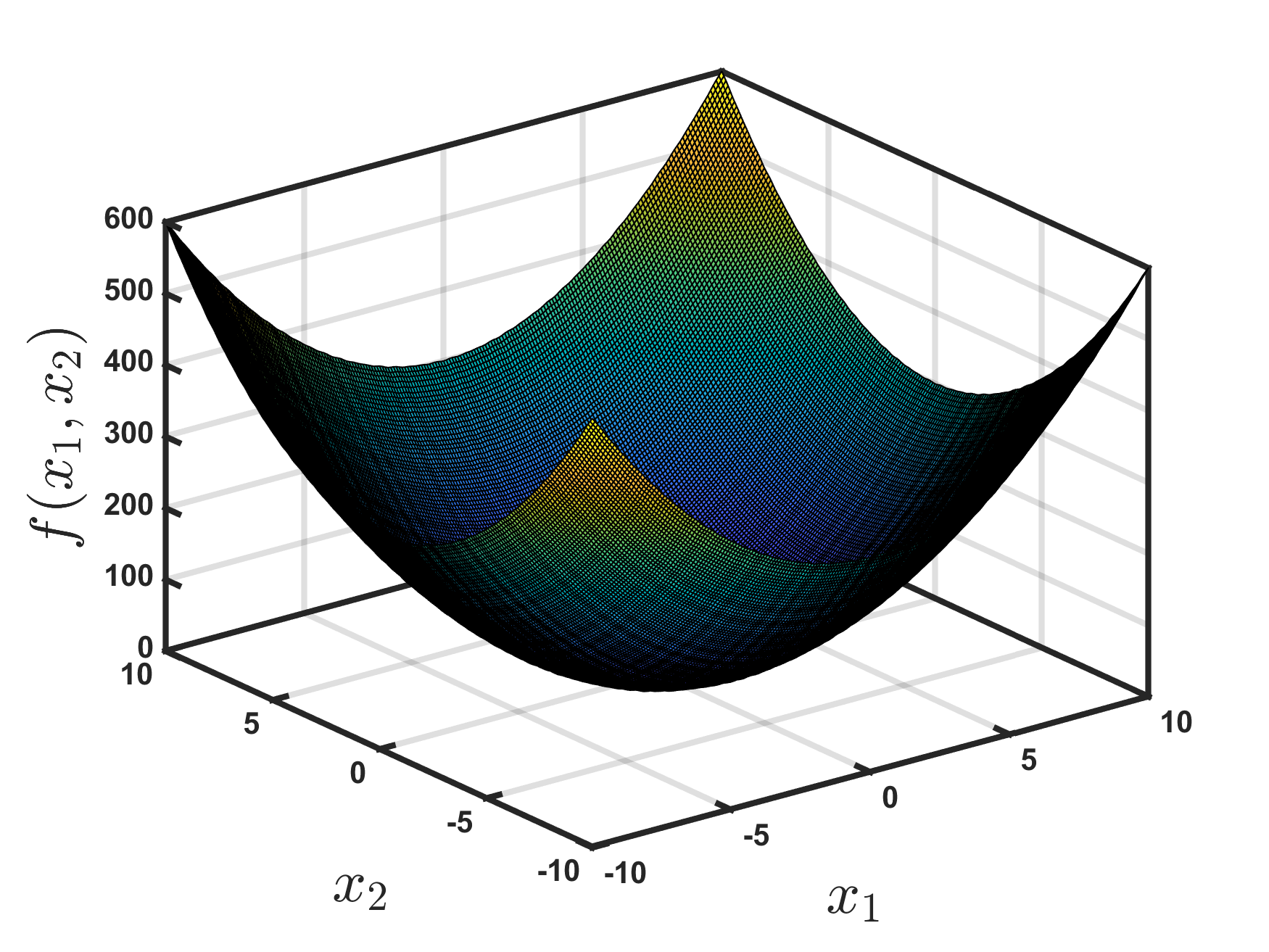}
\caption{\href{https://link.springer.com/article/10.1007/s00500-010-0650-7/figures/7}{shiftedBoha1}}
\end{subfigure}
\begin{subfigure}[b]{0.24\textwidth}
\includegraphics[width=\textwidth]{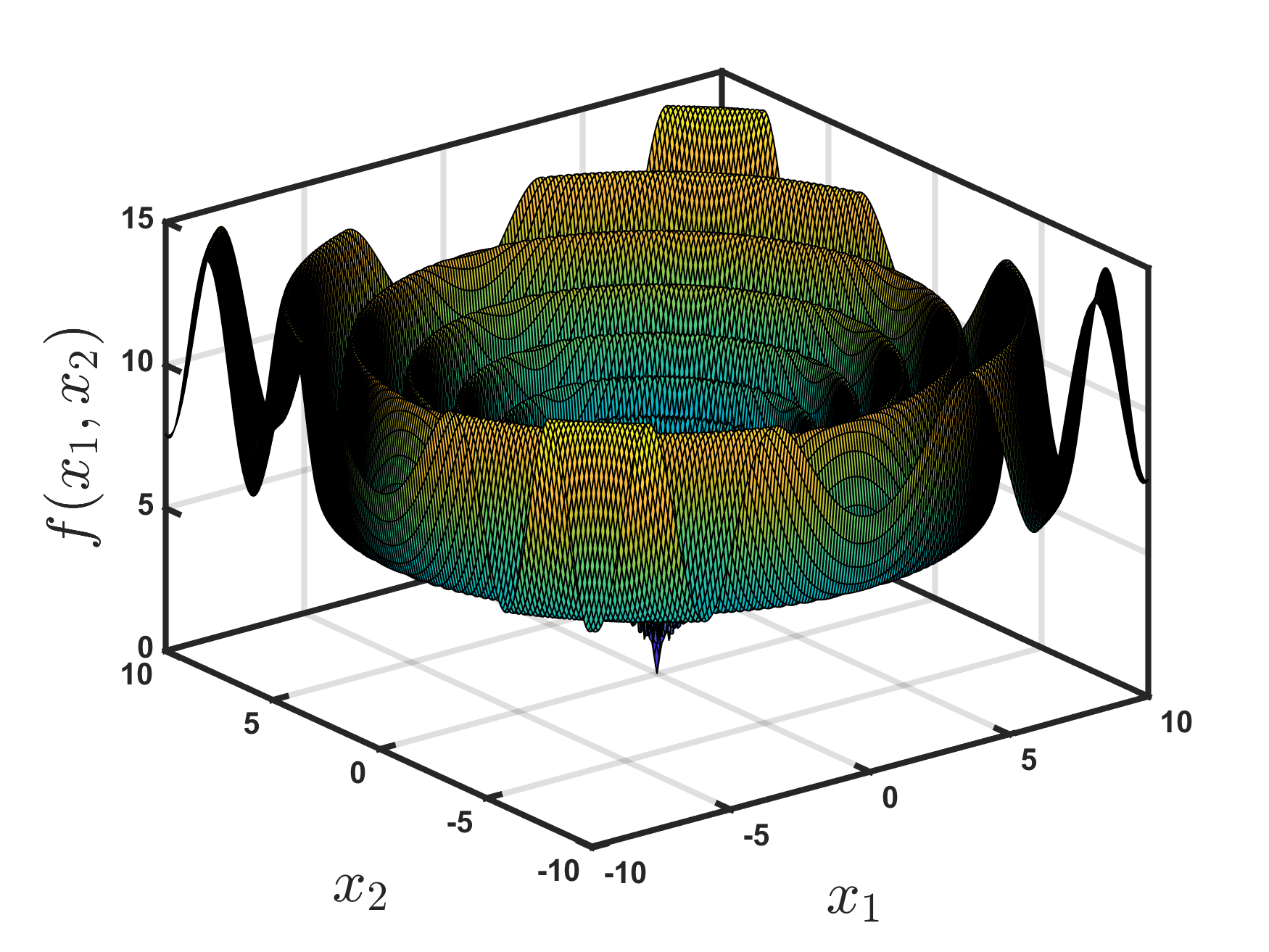}
\caption{\href{https://link.springer.com/article/10.1007/s00500-010-0650-7/figures/7}{shiftedSchaffer}}
\end{subfigure}
\begin{subfigure}[b]{0.24\textwidth}
\includegraphics[width=\textwidth]{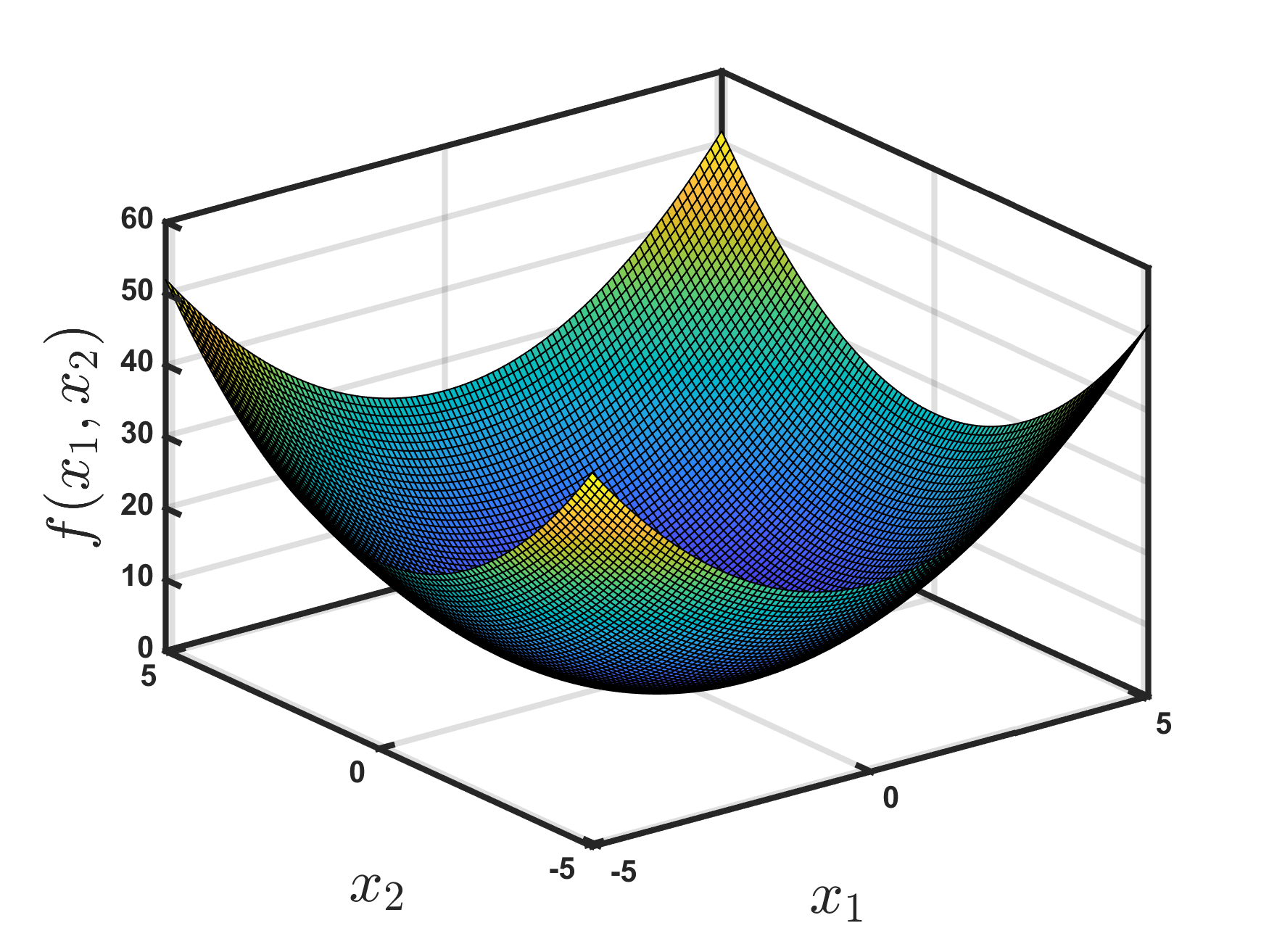}
\caption{\href{http://www.sfu.ca/~ssurjano/spheref.html}{spheref}}
\end{subfigure}
\begin{subfigure}[b]{0.24\textwidth}
\includegraphics[width=\textwidth]{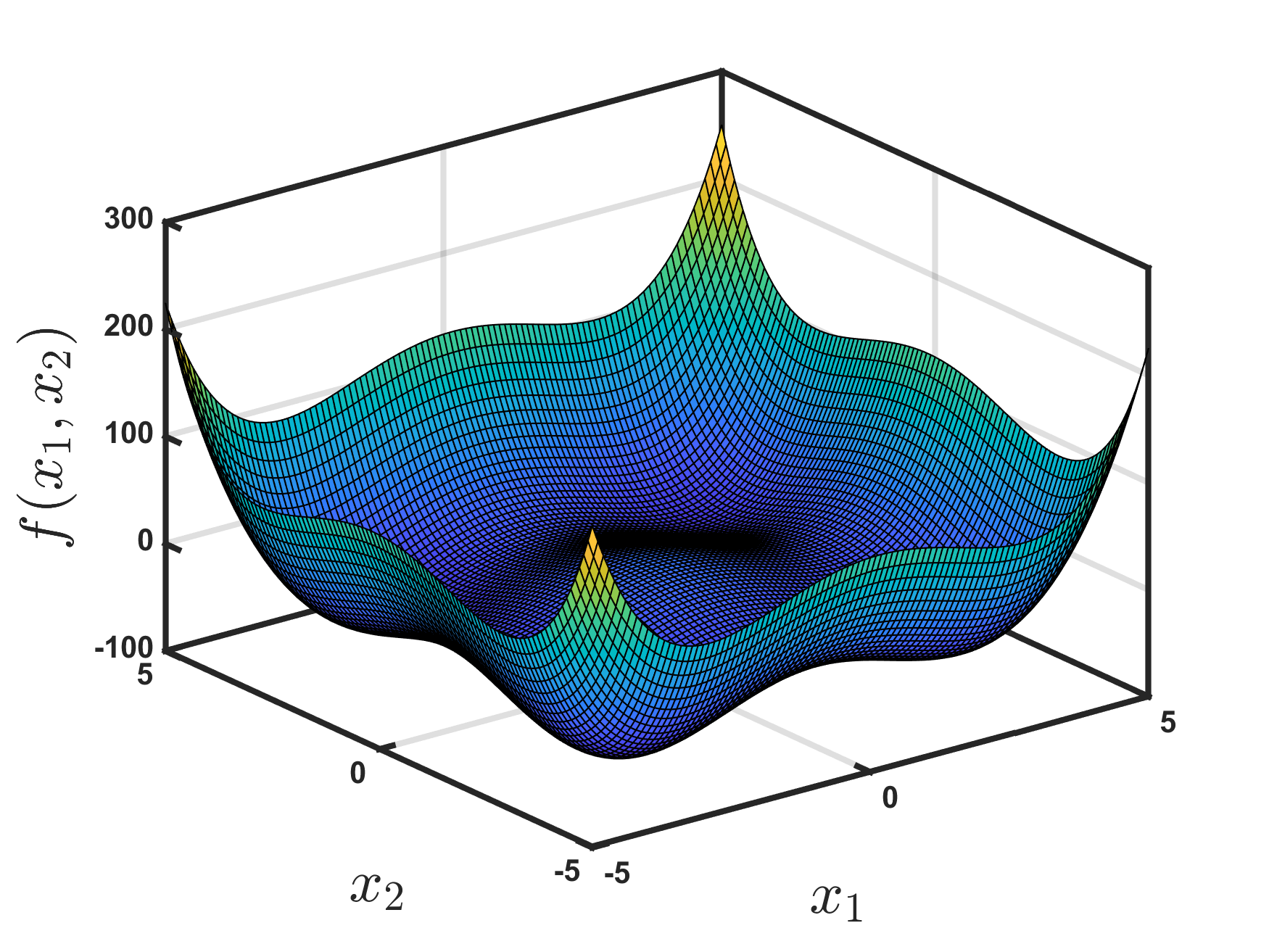}
\caption{\href{http://www.sfu.ca/~ssurjano/stybtang.html}{stybtang}}
\end{subfigure}
\begin{subfigure}[b]{0.24\textwidth}
\includegraphics[width=\textwidth]{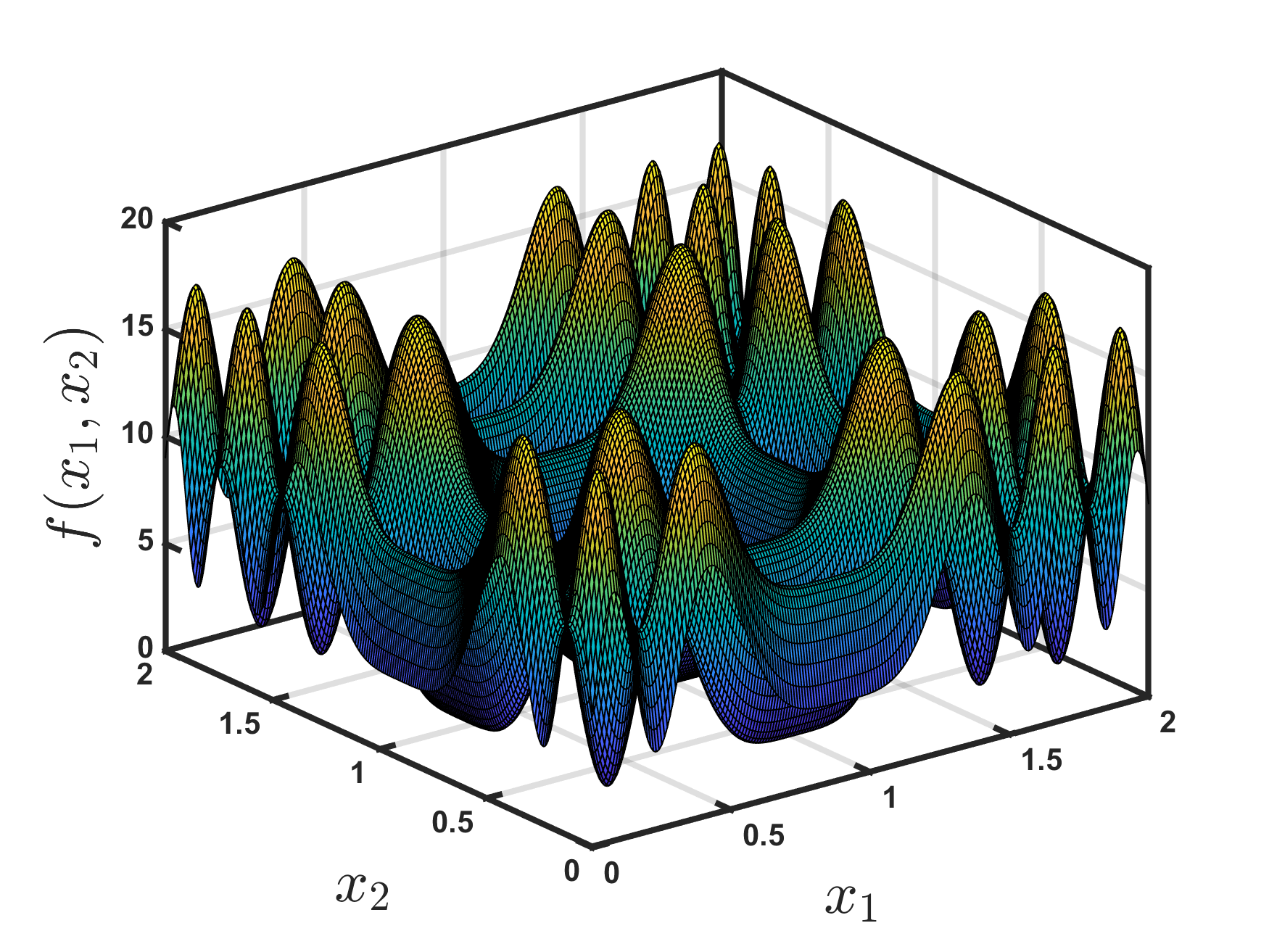}
\caption{\href{https://github.com/luclaurent/optiGTest/blob/master/unConstrained/funTrigonometric2.m}{trigonometric2}}
\end{subfigure}
\begin{subfigure}[b]{0.24\textwidth}
\includegraphics[width=\textwidth]{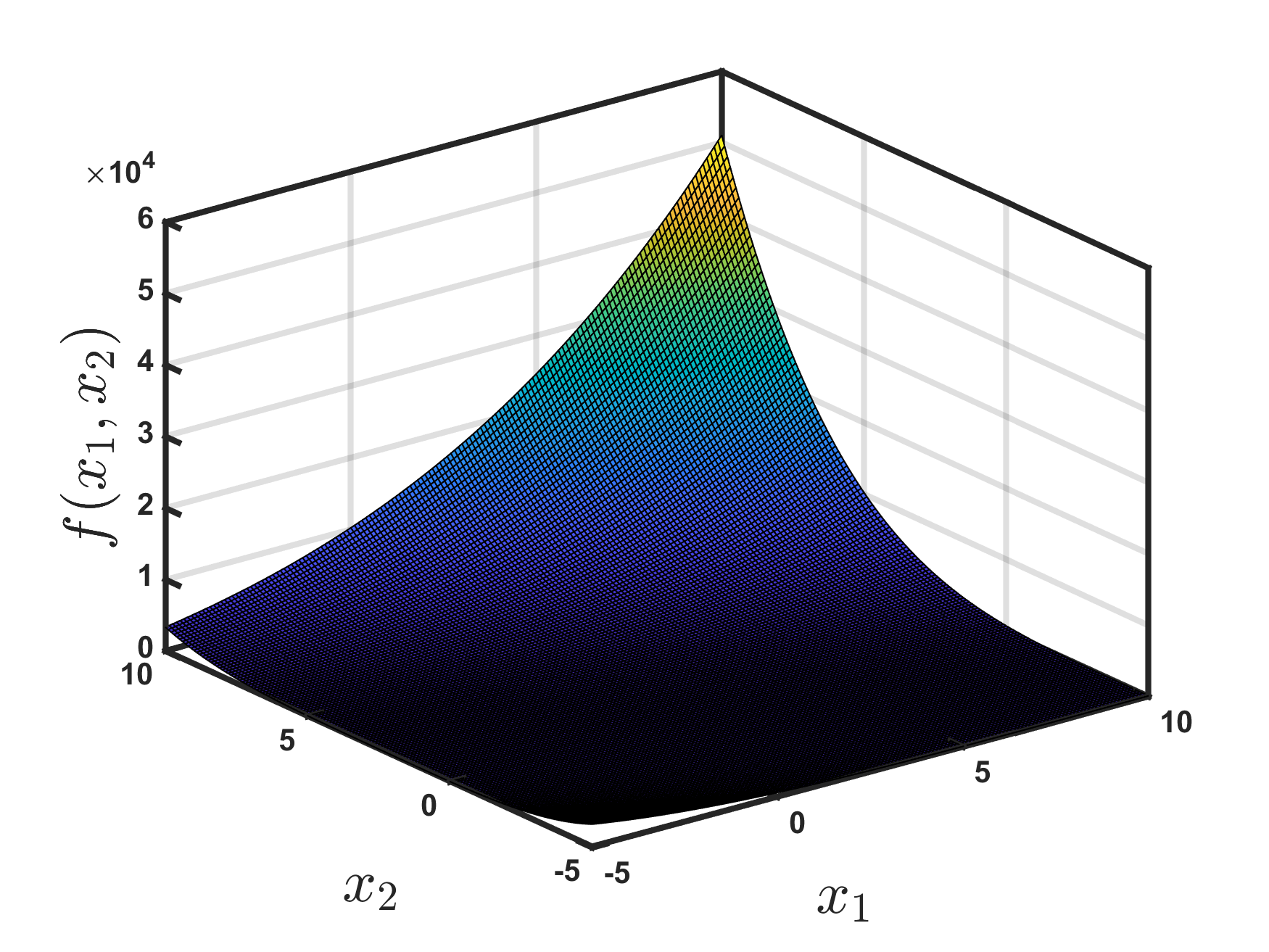}
\caption{\href{http://www.sfu.ca/~ssurjano/zakharov.html}{zakharov}}
\end{subfigure}
\caption{Gallery of test functions shown in two dimensions only. For some functions, the lower and upper bounds have been modified from those used in our computational experiments (see Table~\ref{table:test_fnc_table}) to improve the visualization.}
\label{fig:test_fnc_gallery}
\end{figure}

\subsubsection{Black-box optimization solvers compared}

While there are dozens of BBO solvers in the literature \cite{rios2013derivative}, we chose to compare Particle Swarm Optimization (PSO) and NOMAD \citep{Le2011a} due to their popularity and consistently strong performance in the BBO community.  Additionally, their contrasting search philosophies offer an interesting counterpoint in the context of solution polishing. We describe both solvers below and how we invoke them. 

PSO is a metaheuristic optimization algorithm, capable of handling both smooth and nonsmooth functions, inspired by the social behavior of birds and fish. We used Matlab's PSO implementation \url{www.mathworks.com/help/gads/particleswarm.html}. PSO treats individual solutions as particles in a multidimensional space and iteratively refines their positions based on their own experience and the collective information shared within the swarm. We set the number of particles to 20 for all the experiments. We set a seed to obtain the candidate solutions and allow the algorithm to choose its initial particles based on the seed. We sort the final 20 particles in ascending order according to objective function value. We place the first solution in our set $\mc{S}$ of elite solutions.  We then include the second best solution in $\mc{S}$, if it is not ``too close'' to the first solution. We continue including solutions, unless they are ``too close'' to a solution already in $\mc{S}$.  For solution polishing with PSO, we give the 5 elite solutions as initial particles and allow PSO to choose the other 15 initial particles. Since Matlab's PSO does not take initial function values as parameters in most of its versions, we give only the coordinates for each elite solution and allow the algorithm to compute their function values. The maximum number of function evaluations is $(N_I + 1)N_p$, where $N_I$ is the maximum number of iterations and $N_p$ the number of particles. We set the maximum number of iterations accordingly as that is the parameter within our control. 

NOMAD (Nonlinear Optimization with the MADS algorithm) \citep{Le2011a} is ``a C++ implementation of the Mesh Adaptive Direct Search algorithm (MADS), designed for difficult blackbox optimization problems'' \url{https://www.gerad.ca/en/software/nomad/}. With the ability to handle nonsmooth functions, constraints, and integer/categorical decision variables, it remains one of the most consistent and dominant DFO solvers in practice \cite{rios2013derivative}. 
Except for setting $\texttt{min\_mesh\_size} = \num{1e-4}$ and $\texttt{initial\_mesh\_size} = 10$, we use the default parameter settings. 

\begin{algorithm} 
\caption{\texttt{generateEliteSolutions($f,D,\v{x}^L,\v{x}^U,K,N)$}: Generate a set of $K \geq 1$ elite solutions given a function $f:\Re^D \mapsto \Re$ on the domain $[\v{x}^L,\v{x}^U] \subset \Re^D$ and a positive integer $N$ (e.g., the number of grid indices used in LineWalker)}
\label{algo:generate_elite_solutions}
\begin{algorithmic}[1]
\State $\mc{S} = \emptyset$ \Comment{Initialize the set $\mc{S}$ of elite solutions}
\State \texttt{options = nomadset('max\_bb\_eval',50*$D$)} \Comment{Define NOMAD options}
\While{$(|\mc{S}| < K)$} 
	\State $\v{x}^0 = \texttt{rand}(D,1)*(\v{x}^U - \v{x}^L) + \v{x}^L$ \Comment{Choose a random initial solution}
	\State $[\tilde{\v{x}},\tilde{f}] = \texttt{nomad}(f,\v{x}^0,\v{x}^L,\v{x}^U,\texttt{options})$ \Comment{Invoke NOMAD} \label{step:invoke_NOMAD}
	\State $V = \max\{ |\hat{x}_d| : \hat{\v{x}} \in \mc{S}, d=1,\dots,D\} $ \Comment{Compute the max coordinate value} 
	\State $\texttt{tol} = \frac{x_1^U - x_1^L}{N V}$ \Comment{Set the distance tolerance} \label{step:set_tolerance} 
	\If{$\texttt{dist}(\tilde{\v{x}},\hat{\v{x}})>\texttt{tol}~\forall \hat{\v{x}} \in \mc{S}$} \label{step:check_tolerance} \Comment{Ensure that $\tilde{\v{x}}$ is sufficiently far away}
		\State $\mc{S} \gets \mc{S} \cup \{\tilde{\v{x}}\}$ \Comment{Include $\tilde{\v{x}}$ in the set of elite solutions}
	\EndIf 	
\EndWhile
\State \textbf{return} $\mc{S}$ \Comment{Return the set of elite solutions}
\end{algorithmic}
\end{algorithm}

Algorithm~\ref{algo:generate_elite_solutions} provides pseudocode for how we generate $K$ elite solutions using NOMAD. The algorithm is straightforward except for one nuance - a requirement that all elite solutions are sufficiently far apart.  After generating a random initial solution in the box $[\v{x}^L,\v{x}^U]$, we invoke NOMAD given a maximum of $50D$ function evaluations.  If NOMAD returns a solution that is sufficiently far from all previously found elite solution, then we accept the solution.  Otherwise, we repeat the search until we have found $K$ elite solutions. 
Note that, in step~\ref{step:check_tolerance}, we only accept a new solution $\tilde{\v{x}}$ in the set $\mc{S}$ of elite solutions if it is sufficiently far from all previously found elite solutions, even if its objective function value is better. The criterion for ``sufficiently far'' is determined in step~\ref{step:set_tolerance} where we set a tolerance \texttt{tol} based on the number $N$ of grid indices that we use in \texttt{LineWalker}. The tolerance prevents the possibility of two elite solutions being mapped to the same \texttt{LineWalker} grid index. 

The appendix \ref{sec:NOMAD_profiles} shows detailed profiles of NOMAD's empirical convergence behavior for each of the 19 test functions in 2, 4, 8, and 16 dimensions.  Specifically, the profiles depict the number of black-box function evaluations to reach a ``local optimum,'' a point where no more improvement of its best objective value is observed. Because the results indicate that NOMAD is capable of converging within $50D$ function evaluations, we chose to set the maximum number of function evaluations to $50D$ for NOMAD and PSO when generating their elite solutions, i.e., before attempt to invoke solution polishing.

\subsubsection{Solution polishing methods compared}

As stated above, we partition our numerical experiments according to which BBO solver that we use to generate elite solutions and to polish solutions in $D$ dimensions.  Specifically, if we use PSO to generate elite solutions, then we also use PSO to polish these solutions and compare its performance against our one-dimensional solution polishing algorithms.  The same is true for NOMAD.  Although it certainly could be done, we do not attempt to use PSO to generate elite solutions and then use NOMAD to polish those solutions, or vice versa.

Under this backdrop, given a set $\mc{S}$ of elite solutions generated by PSO or NOMAD, we compare the following approaches, each of which is given at most 290 function evaluations, to perform solution polishing:
\begin{itemize}
\item \textbf{Straight Linking Line Search (LS)} (multiple 1-dimensional searches): For each of the ``$|\mc{S}|$ choose 2'' (e.g., 10) pairs of elite solutions, we allow LineWalker to search using at most 30 functions evaluations on the ``extended'' straight line segment. Specifically, given a pair of elite solutions $\hat{\v{x}}^1$ and $\hat{\v{x}}^2$, we construct a line segment through these two points whose endpoints touch the boundary of the box $[\v{x}^L,\v{x}^U]$. Since we pass $\hat{\v{x}}^1$ and $\hat{\v{x}}^2$, along with their corresponding evaluated function values $f(\hat{\v{x}}^1)$ and $f(\hat{\v{x}}^2)$, to LineWalker, we only allow LineWalker to make an additional $28 (=30-2)$ function evaluations for each line segment. Lastly, since LineWalker may not evaluate its predicted minimizer, we perform one final function evaluation.  In total, we perform 29 additional function evaluations per line segment.  Since there are 10 pairs of elite solutions, this approach uses 290 total function evaluations.  
\item \textbf{Multipoint curve (MP)} (1-dimensional search): We invoke Algorithm~\ref{algo:generate_mp_curve} to generate a curve through all $|\mc{S}|$ elite solutions.  Then, we use LineWalker to search along this curve using a maximum of 290 function evaluations. 
\item \textbf{Propeller curve (Prop)} (1-dimensional search): We invoke Algorithm~\ref{algo:generate_p_curve} to generate a propeller-shaped curve through the best solution (i.e., the one with the smallest objective function value) in $\mc{S}$. We then employ LineWalker, with a maximum of 290 function evaluations, to search this curve for an improving solution. 
\item \textbf{BBO solver (PSO or NOMAD)} ($D$-dimensional search): If PSO was used to generate $|\mc{S}|$ elite solutions, then we initialize PSO with 20 particles (solutions) by providing these $|\mc{S}|$ elite solutions, while allowing PSO to select $20-|\mc{S}|$ remaining solutions at random.  Otherwise, NOMAD was used to generate elite solutions and we initialize NOMAD's search from the best-known solution. The BBO solver then searches the entire $D$-dimensional space for an improving solution. 
\end{itemize}

As a final point of comparison, we note that the BBO solvers used $50DK$ function evaluations to generate their $K=5$ elite solutions, or a total of 500, 1000, 2000, and 4000 function evaluations in 2, 4, 8, and 16 dimensions, respectively. In contrast, solution polishing was given at most 290 function evaluations. In 2D, one could argue that we allot too many function evaluations to solution polishing relative to the budget used to generate elite solutions. However, we believe that the solution polishing budget allocation is sensible for larger dimensions. 

\subsection{Numerical Results} \label{sec:numerical_results}

This section highlights our main numerical results. Since it makes little sense to evaluate how well a solution polishing method can improve a solution that is already deemed optimal, we only consider instances that the DFO solvers NOMAD and PSO did not solve to global optimality. We follow standard requirements used in the DFO community in which ``A solver [is] considered to have successfully solved a problem [i.e., optimized a test function] if it returned a solution with an objective function value within 1\% or 0.01 of the global optimum, whichever was larger'' \cite{ploskas2021review,rios2013derivative}.
Mathematically, let $f^{*,\textrm{true}}$ denote the optimal objective function value of the true underlying function. Let $x^{*,\textrm{eval}}$ denote the sample with the smallest evaluated objective function value. A test function is declared ``solved'' if 
\begin{equation} \label{def:instance_is_solved}
|\fhat(x^{*,\textrm{eval}})-f^{*,\textrm{true}}| \leq 0.01 \max\{1,|f^{*,\textrm{true}}|\}. 
\end{equation}

We use the metric ``optimality gap closed'' (or simply ``gap closed'') to measure the improvement obtained by a solution polishing method.
The definition for ``optimality gap closed'' expressed as a percentage is given by: 
\begin{equation}
\gamma(\fBefore,\fAfter) = \left[ 1 - \frac{f^{*,\textrm{true}} - \fAfter}{f^{*,\textrm{true}}-\fBefore} \right] 100 \%
\end{equation}
where $f^{\textrm{before}}$ and $f^{\textrm{after}}$ denote the minimum objective function value evaluated before and after solution polishing, respectively.
Observe that the optimality gap closed can only be computed for instances where we have not found the true minimum before polishing. If we have found the minimum, we run into division by zero. 
The optimality gap closed tells us the percentage of the best possible improvement that we have obtained via solution polishing. 

\begin{figure} [h!]
\begin{subfigure}[b]{0.49\textwidth}
\includegraphics[width=\textwidth]{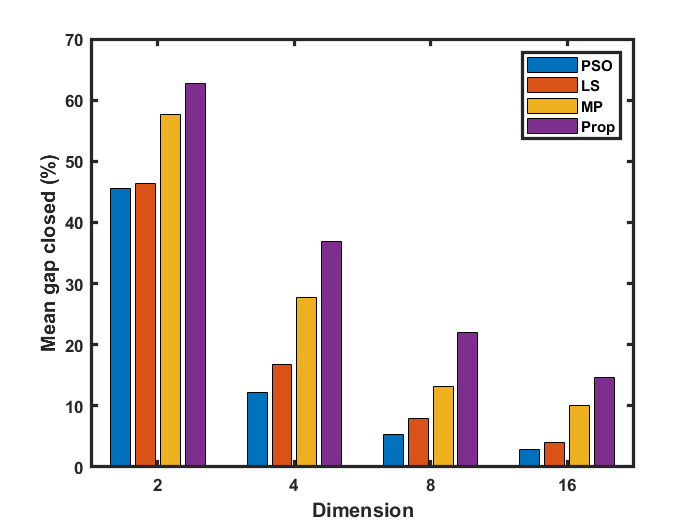}
\caption{PSO comparison}
\label{fig:mean_gap_closed_pso}
\end{subfigure}
\begin{subfigure}[b]{0.49\textwidth}
\includegraphics[width=\textwidth]{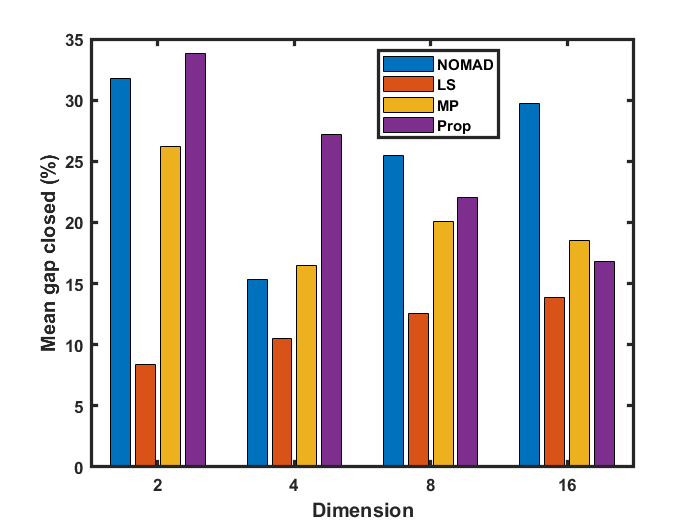}
\caption{NOMAD comparison}
\label{fig:mean_gap_closed_nomad}
\end{subfigure}
\caption{Mean optimality gap closed using elite solutions generated by (a) PSO and (b) NOMAD.}
\label{fig:mean_gap_closed}
\end{figure}

\begin{figure} [h!]
\begin{subfigure}[b]{0.24\textwidth}
\includegraphics[width=\textwidth]{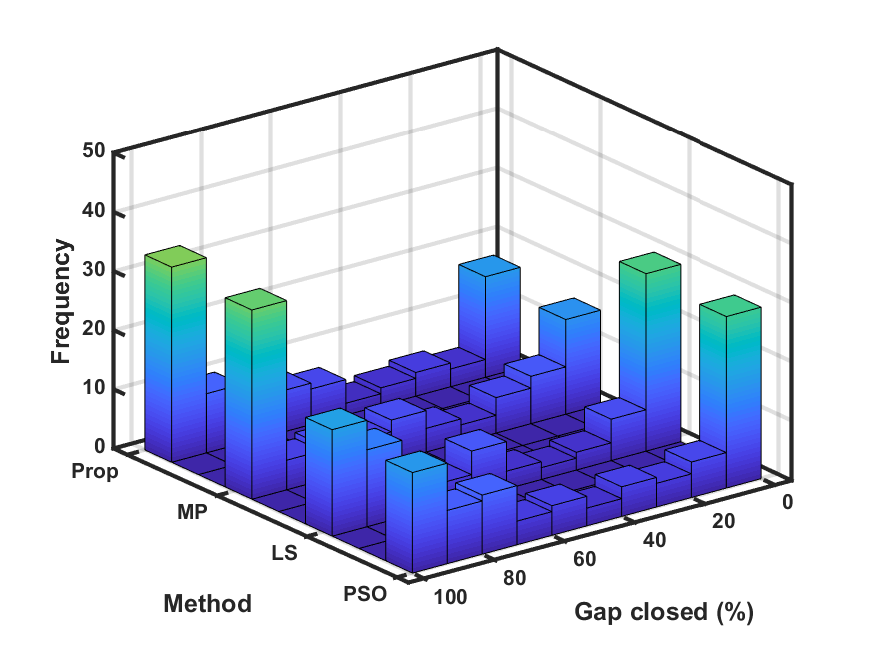}
\caption{PSO 2D}
\label{fig:histogram_gap_closed_pso_2D}
\end{subfigure}
\begin{subfigure}[b]{0.24\textwidth}
\includegraphics[width=\textwidth]{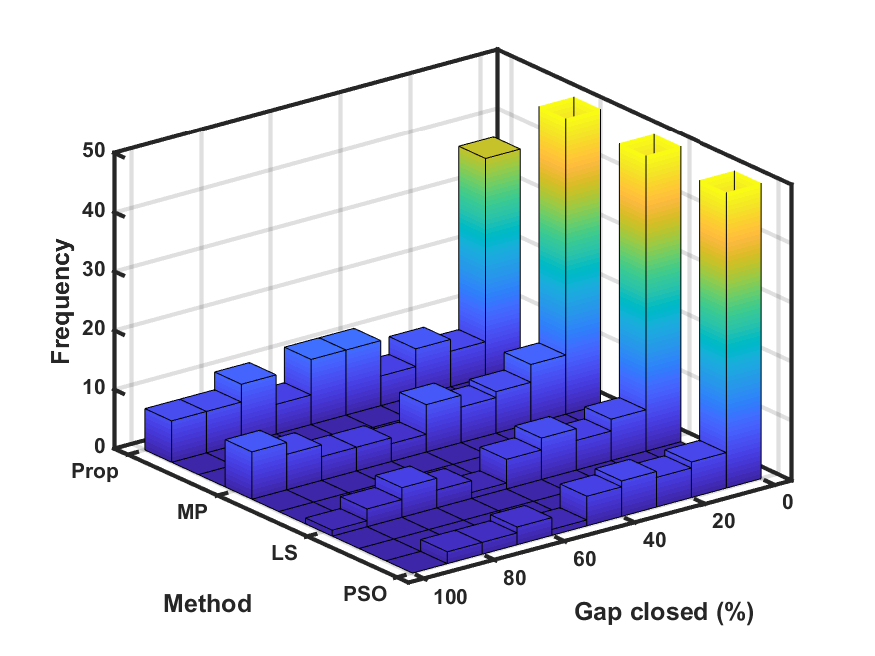}
\caption{PSO 4D}
\label{fig:histogram_gap_closed_pso_4D}
\end{subfigure}
\begin{subfigure}[b]{0.24\textwidth}
\includegraphics[width=\textwidth]{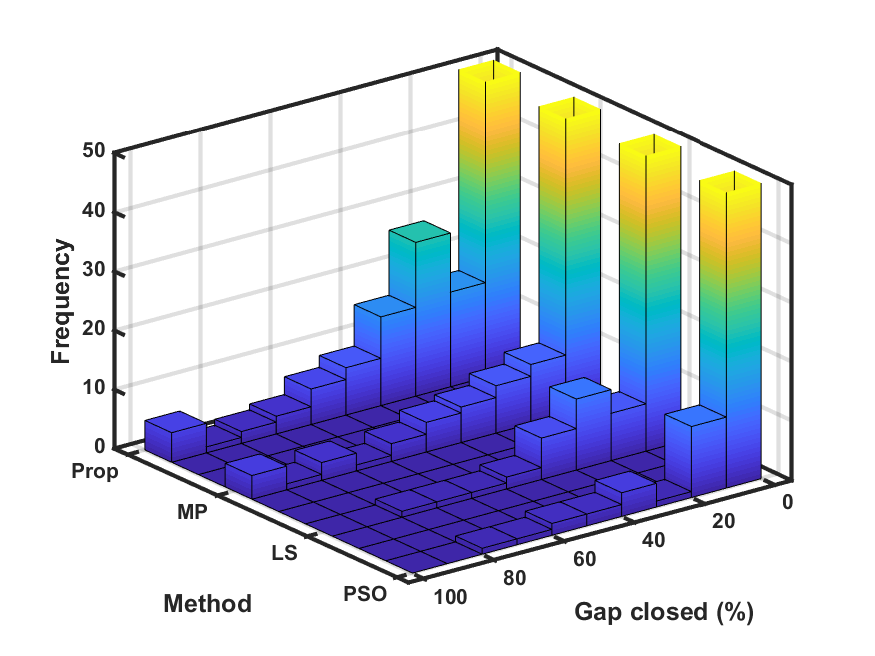}
\caption{PSO 8D}
\label{fig:histogram_gap_closed_pso_8D}
\end{subfigure}
\begin{subfigure}[b]{0.24\textwidth}
\includegraphics[width=\textwidth]{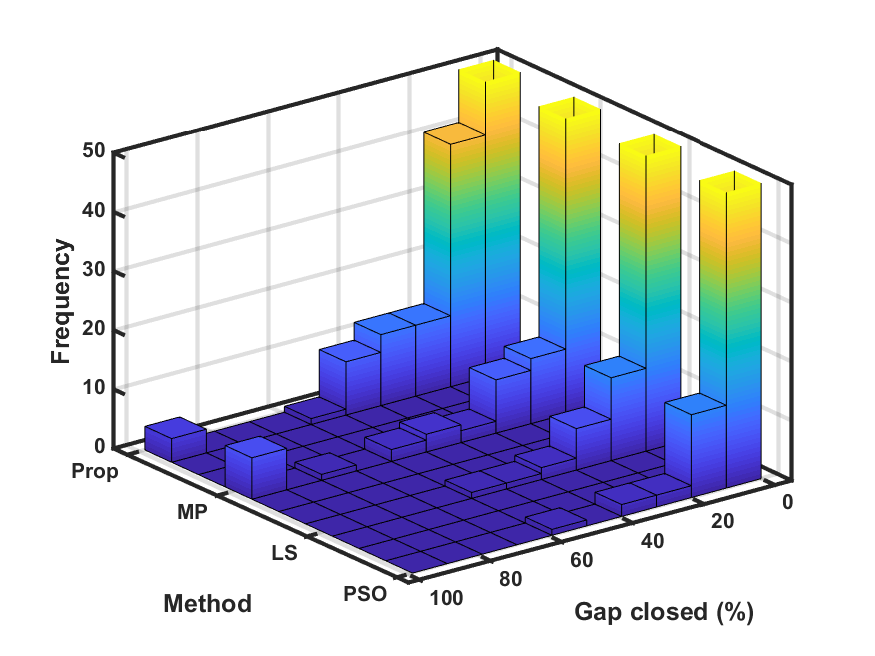}
\caption{PSO 16D}
\label{fig:histogram_gap_closed_pso_16D}
\end{subfigure}
\begin{subfigure}[b]{0.24\textwidth}
\includegraphics[width=\textwidth]{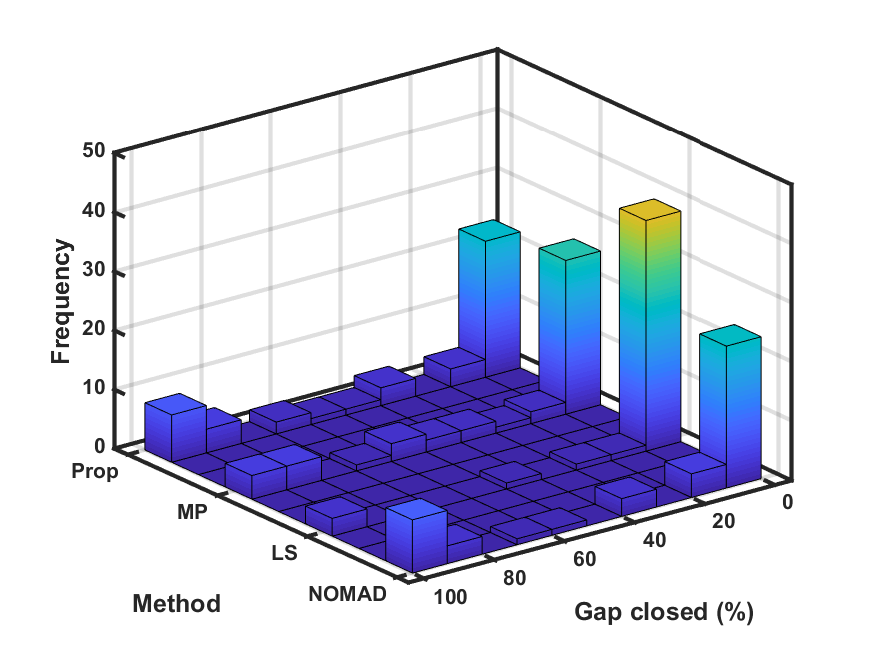}
\caption{NOMAD 2D}
\label{fig:histogram_gap_closed_nomad_2D}
\end{subfigure}
\begin{subfigure}[b]{0.24\textwidth}
\includegraphics[width=\textwidth]{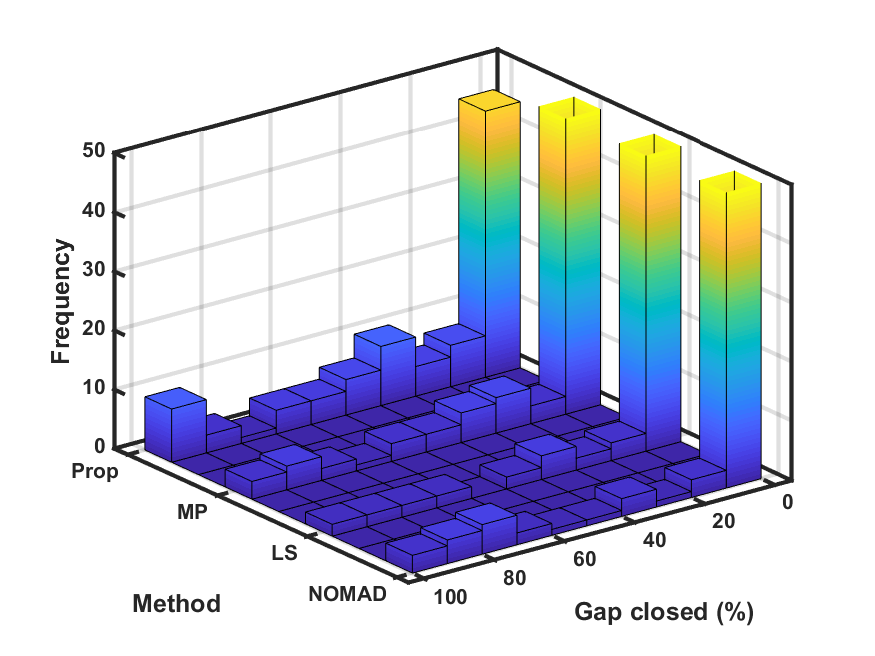}
\caption{NOMAD 4D}
\label{fig:histogram_gap_closed_nomad_4D}
\end{subfigure}
\begin{subfigure}[b]{0.24\textwidth}
\includegraphics[width=\textwidth]{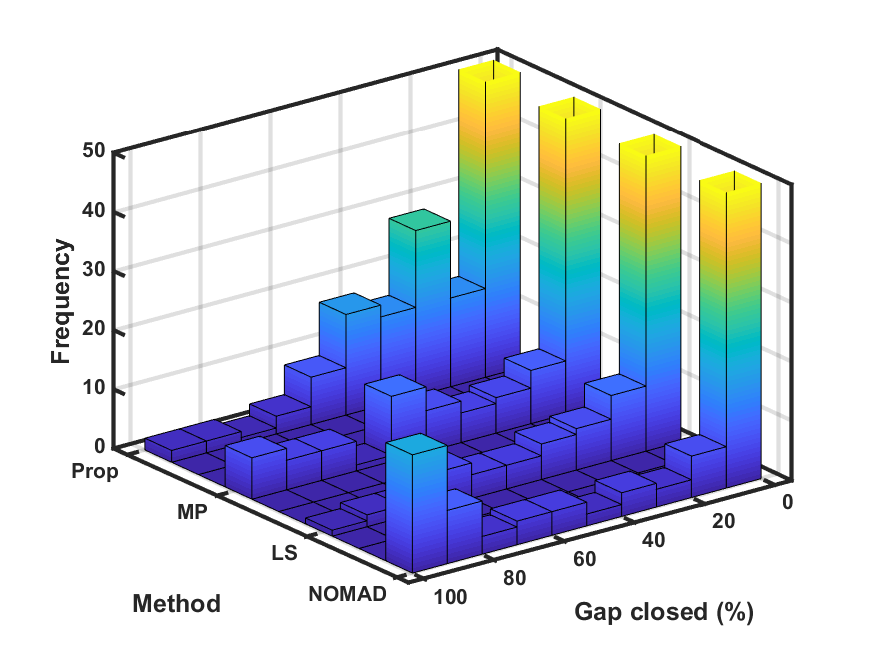}
\caption{NOMAD 8D}
\label{fig:histogram_gap_closed_nomad_8D}
\end{subfigure}
\begin{subfigure}[b]{0.24\textwidth}
\includegraphics[width=\textwidth]{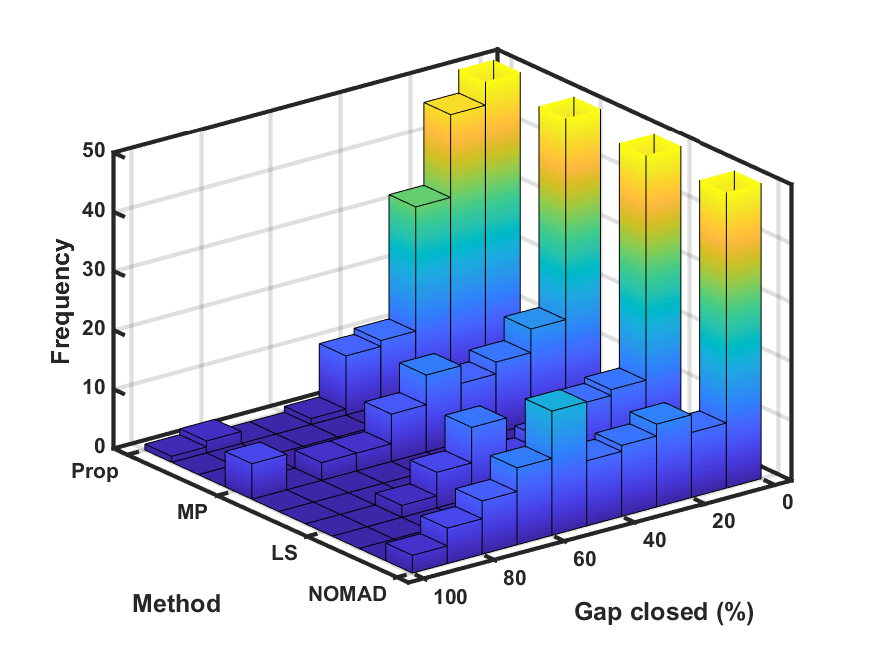}
\caption{NOMAD 16D}
\label{fig:histogram_gap_closed_nomad_16D}
\end{subfigure}
\caption{Histograms of optimality gap closed for instances in 2, 4, 8, and 16 dimensions. Top row: PSO comparison.  Bottow row: NOMAD comparison.}
\label{fig:histogram_gap_closed}
\end{figure}

Figure~\ref{fig:mean_gap_closed} shows the mean optimality gap closed for instances that were not already deemed solved. For Figure~\ref{fig:mean_gap_closed_pso}, we used PSO as the BBO solver to generate elite solutions and to perform solution polishing in $D$ dimensions.  For Figure~\ref{fig:mean_gap_closed_nomad}, we used NOMAD for these purposes. When PSO is used as the BBO solver, Figure~\ref{fig:mean_gap_closed_pso} shows a clear ordering of the solution polishing methods with the propeller method yielding the most improvement on average, followed by the multipoint method, line segment search, and PSO.  Equally notable, we see that solution polishing appears to be less effective as the dimension increases since the mean gap closed decreases as we double the number of dimensions.

Figure~\ref{fig:mean_gap_closed_nomad} shows how the solution polishing methods compare when NOMAD is the BBO solver. Whereas the maximum mean gap closed (i.e., the maximum value on the Figure~\ref{fig:mean_gap_closed_pso} y-axis) was over 60\% when PSO generated the elite solutions, the maximum mean gap closed in Figure~\ref{fig:mean_gap_closed_nomad} was just below 35\% when NOMAD generated the elite solutions, suggesting that NOMAD found better solutions than PSO as less improvement was possible.  Meanwhile, while $D$-dimensional solution polishing with PSO performed worse than all one-dimensional solution polishing methods, the same cannot be said for NOMAD. Figure~\ref{fig:mean_gap_closed_nomad} shows that $D$-dimensional solution polishing with NOMAD outperforms, on average, all one-dimensional solution polishing methods in 8 and 16 dimensions.  In 2 and 4 dimensions, however, Algorithm~\ref{algo:generate_p_curve} using the propeller curve performs the best overall unsolved instances. 

Figure~\ref{fig:histogram_gap_closed} depicts histograms of the optimality gap closed for each BBO solver and thus furnishes more granular information about the results.  The maximum frequency shown is capped at 50 to improve readability and comparison.
Consistent with the average performance shown in Figure~\ref{fig:mean_gap_closed}, we see the general trend that as the number of dimensions increases, the histogram bin heights decrease for ``Gap closed (\%)'' above 50\%, revealing that the solution polishing methods cannot close as much of the optimality gap. 
Figure~\ref{fig:histogram_gap_closed_pso_2D} shows that the propeller and multipoint methods are able to close nearly 100\% of the optimality gap in over 30\% of the unsolved instances when PSO generates the elite solutions. Interesting, in 8D, NOMAD is nearly able to perfectly polish several elite solutions as shown in Figure~\ref{fig:histogram_gap_closed_nomad_8D}.

\section{Conclusions and future research directions} \label{sec:conclusions}

We introduced two methods for performing solution polishing along one-dimensional curves through one or more elite solutions. To generate these smooth curves, we presented a convex quadratic program to solve a discretized optimal control problem that conceptually minimizes the total acceleration of a steel ball, which moves frictionlessly without any external forces through the elite solutions. 
Our computational experiments suggested that solution polishing on a well-chosen curve is competitive with using a state-of-the-art DFO solver for solution polishing.
Meanwhile, traditional ``straight linking'' applied to all combinations of elite solutions was inferior to both solution polishing methods on a curve.
While NOMAD's high-dimensional solution polishing searches were most effect in 8 and 16 dimensions, Matlab's PSO solver exhibited inferior performance to all other methods. 
LineWalker was capable of finding improving solutions with a reasonable number of function evaluations.

As for future research directions, there are many ways to generate a curve through multiple elite solutions. It would be interesting to explore the impact of the step size and other parameters.  Since there are dozens of BBO solvers, one could perform similar numerical experiments with a broader set of BBO solvers. One could also mix and match BBO solvers whereby one BBO solver generates a set of elite solutions and another BBO solver attempts to polish them.
As another alternative, one could vary (as a function of the dimension $D$) the number of elite solutions and the maximum number function evaluations used for polishing to analyze how the solution quality improves. These can serve as hyperparameters that we can tune for solution polishing.
Another future direction could be to utilize the propeller search as a direction finding subproblem in an iterative descent algorithm. At a given point, one could for example optimize along the propeller curve to search for an improved solution. If an improved solution is found, then this could be considered as a descent direction from the current point, along which we could apply LineWalker to find an appropriate step length. However, this approach would only make sense in a setting with a more generous function evaluation budget, and this is not within the scope of this paper.


\section*{Data Availability Statement} \label{sec:Data_Availability}

Test instances are available at \url{https://www.sfu.ca/~ssurjano/},  \url{https://github.com/luclaurent/optiGTest/tree/master/unConstrained}, and \cite[Figure 7]{duarte2011path}.

\small
\bibliographystyle{plainnat}
\bibliography{linewalker_refs,path_relinking_refs}

\newpage
\section*{Appendix} \label{sec:appendix} 

\subsection{Michalewicz function: Optimal objective function values} \label{sec:michal_objvals}

Because the global minimum objective function value for the \href{https://www.sfu.ca/~ssurjano/michal.html}{Michalewicz} (``Michal'') function 
$
-\sum_{i=1}^D \sin(x_i)\sin^{20}\Big(\tfrac{i x_i^2}{\pi}\Big)
$
changes with the dimension $D$ and because this global minimum value is not reported in the literature for all $D$, we performed a computational experiment to produce provably global optimal objective function values up to a small degree of numerical error.  The final caveat stems from the potential for numerical issues in global solvers as we explain below.

\begin{table} [h!]
\centering
\caption{Putative (see \cite[Table 2]{vanaret2020certified}) and certified global minimum objective function values for the Michalewicz function.}
\label{tab:michal_global_minimum_objective_function_values}	
\begin{tabular}{crr}														
\toprule					
\textbf{Dimension}	&	\textbf{Putative} $f^*$	&	\textbf{Certified} $f^*$	\\
\midrule
1	&	-0.69593	&	-0.80130341	\\
2	&	-1.69457	&	-1.80130341	\\
3	&	-2.69321	&	-2.76039468	\\
4	&	-3.69185	&	-3.69885710	\\
5	&	-4.69049	&	-4.68765818	\\
6	&	-5.68913	&	-5.68765818	\\
7	&	-6.68777	&	-6.68088531	\\
8	&	-7.68641	&	-7.66375735	\\
9	&	-8.68505	&	-8.66015172	\\
10	&	-9.68369	&	-9.66015172	\\
11	&	-10.68233	&	-10.65748226	\\
12	&	-11.68097	&	-11.64957500	\\
13	&	-12.67961	&	-12.64781799	\\
14	&	-13.67825	&	-13.64781799	\\
15	&	-14.67689	&	-14.64640019	\\
16	&	-15.67553	&	-15.64186482	\\
\bottomrule
\end{tabular}													
\end{table}
 
\href{www.gurobi.com}{Gurobi 11.0.0} provides global optimization capabilities, but admittedly with numerical error as it constructs piecewise linear approximations of nonconvex nonlinear functions (e.g., the sine function for Michal).  One can set the tolerance for these approximations to be quite small, but some degree of error is inevitable.  Thus, we took the following approach. For $D=1,\dots,16$, we used Gurobi 11.0.0 to generate what it deems a globally optimal solution.  We then passed this solution to CONOPT 4.1, IPOPT 3.11, and SNOPT 7.2 (we experienced errors with Knitro 13.1), which do not rely on piecewise linear approximations, to perform rigorous local optimization to ensure that optimality conditions were satisfied.  Fortunately, using Gurobi's optimal solution as an initial solution, the three local solvers were unanimous in their answer for every dimension $D$.  As an extra safeguard, we passed the resulting optimal solution $\v{x}^*$ into Matlab and evaluated $f(\v{x}^*)$ to ensure that it reached the same function value. Table~\ref{tab:michal_global_minimum_objective_function_values} shows the results and reveals that the formula $- 0.99864D + 0.30271$ presented in \cite[Table 2]{vanaret2020certified} to compute the Michal function's ``putative'' minimum objective function value is incorrect. 

\subsection{NOMAD profiles} \label{sec:NOMAD_profiles}

In this section we present the empirical convergence behavior of NOMAD over the 19 test functions in various dimensions, showing the number of black-box function evaluations to reach a point where no more improvement of its best objective value is observed. The results serve two purposes: i) they provide insight into our decision for setting $50D$ as the maximum number of function evaluations of existing methods before we perform solution polishing; and ii) the potential or challenge for our approach to be effective.

Our results were obtained by running NOMAD v4.3.1 100 times for each test function with a randomly selected starting point. Except for the maximum number of function evaluations being set to 10,000, we used the default termination criteria. Each figure contains 100 plots corresponding to each starting point showing the best objective value found so far as the number of black-box function evaluations increases.

\begin{figure}[h]
\includegraphics[width=\textwidth]{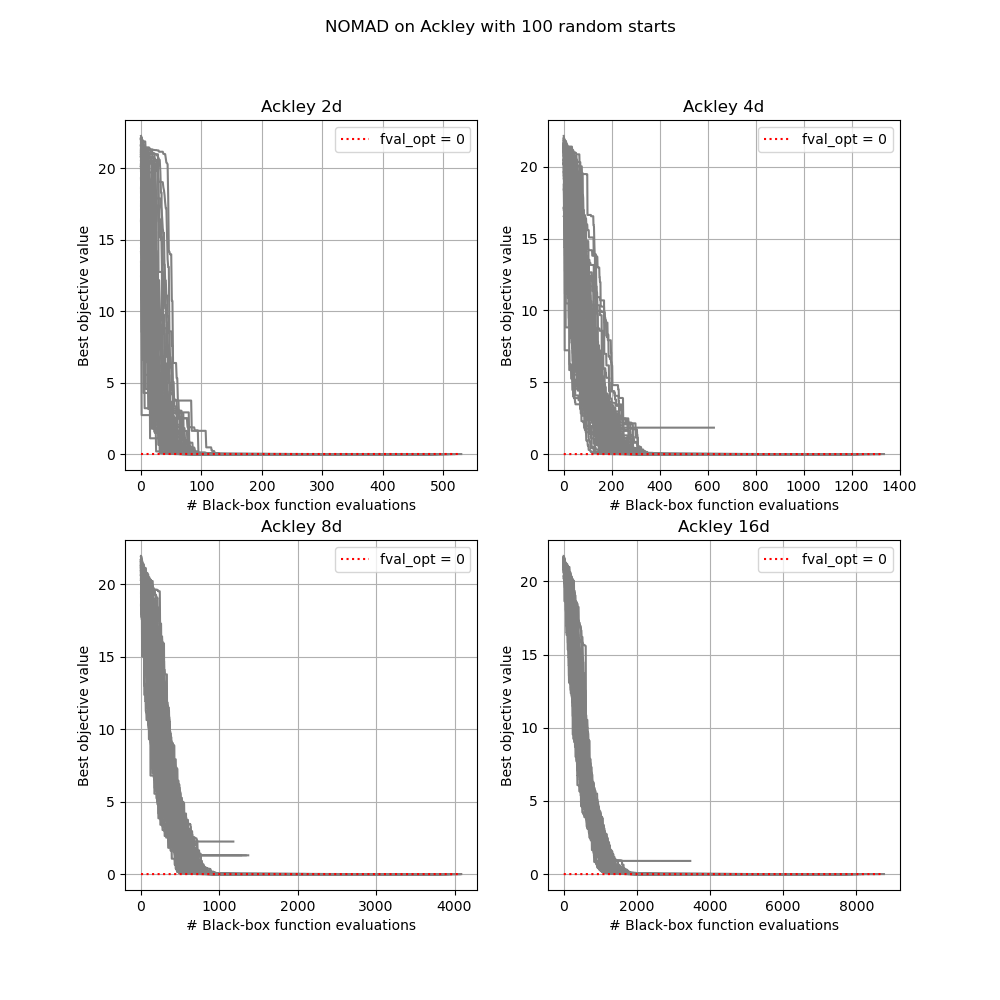}
\caption{\href{http://www.sfu.ca/~ssurjano/ackley.html}{ackley}}
\label{fig:NOMAD_profiles_ackley}
\end{figure}
\begin{figure}[h]
\includegraphics[width=\textwidth]{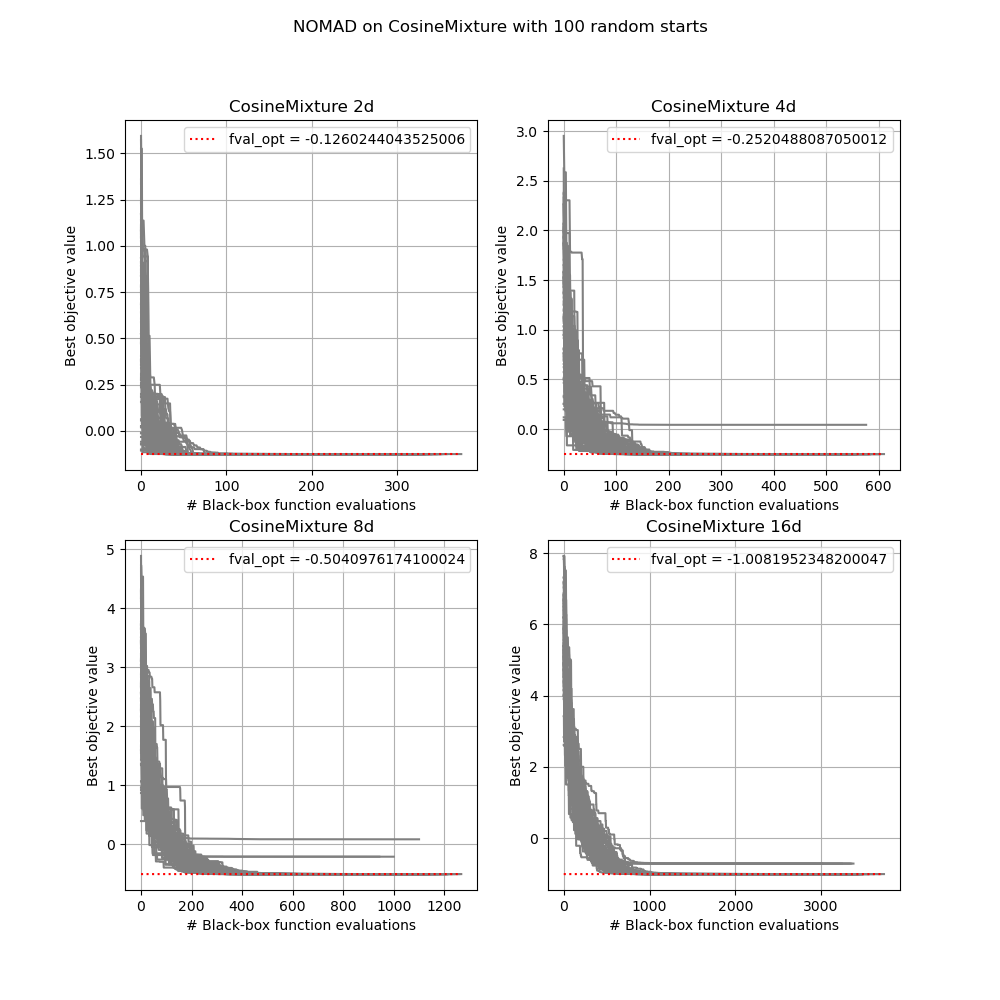}
\caption{\href{https://github.com/luclaurent/optiGTest/blob/master/unConstrained/funCosineMixture.m}{cosineMixture}}
\label{fig:NOMAD_profiles_cosineMixture}
\end{figure}
\begin{figure}[h]
\includegraphics[width=\textwidth]{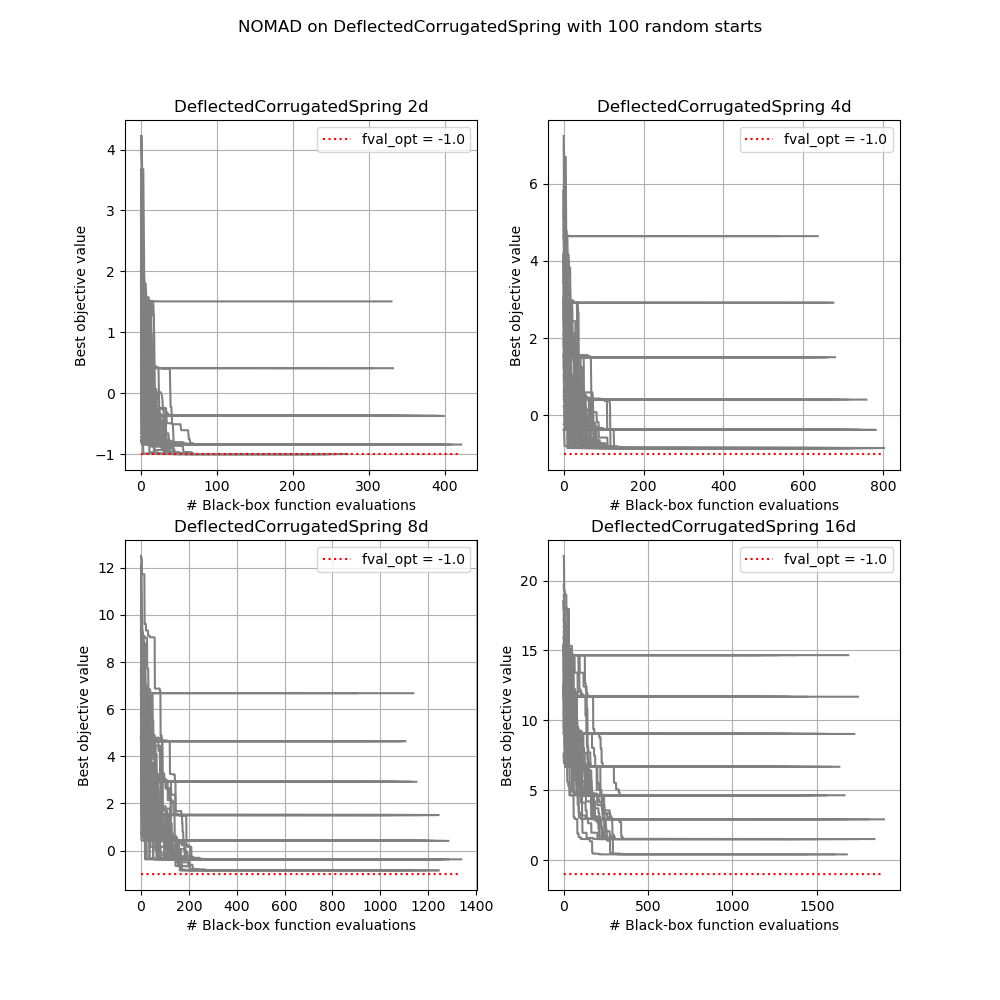}
\caption{\href{https://github.com/luclaurent/optiGTest/blob/master/unConstrained/funDeflectedCorrugatedSpring.m}{deflectedCorrugatedSpring}}
\label{fig:NOMAD_profiles_deflectedCorrugatedSpring}
\end{figure}
\begin{figure}[h]
\includegraphics[width=\textwidth]{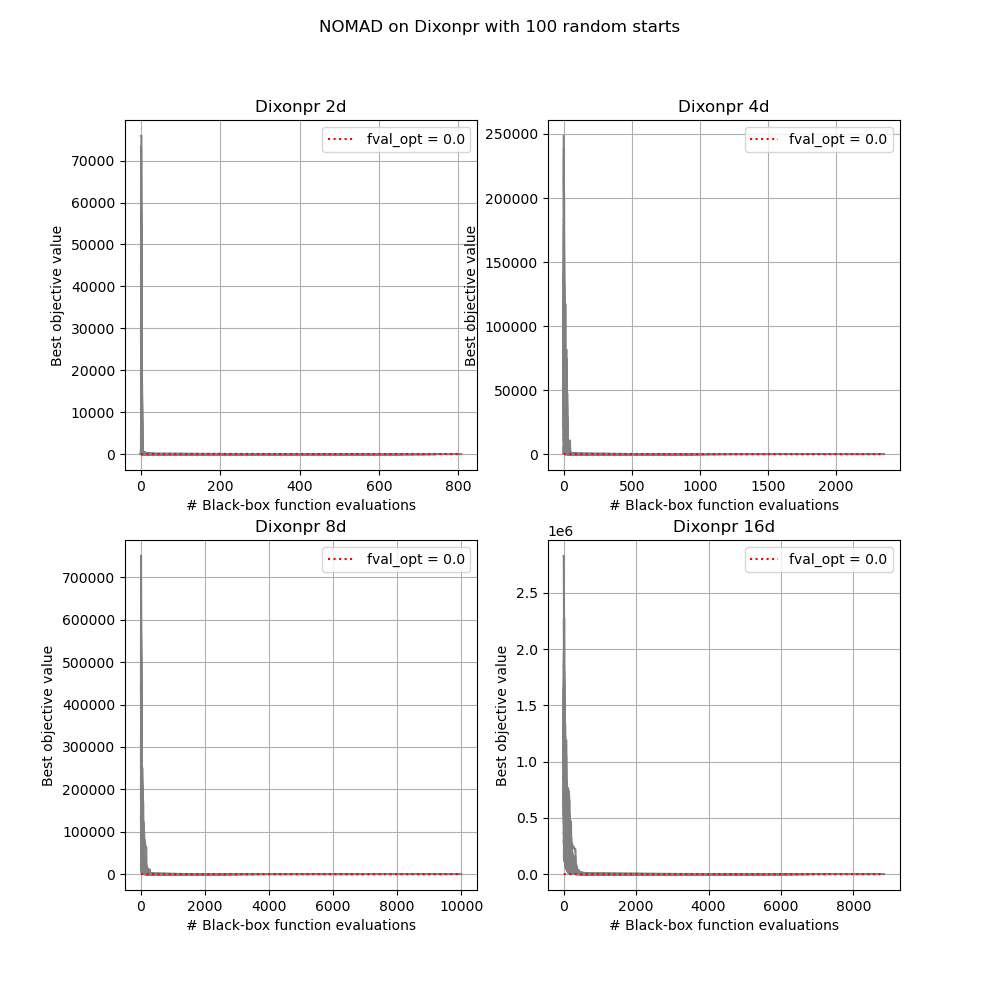}
\caption{\href{http://www.sfu.ca/~ssurjano/dixonpr.html}{dixonpr}}
\label{fig:NOMAD_profiles_dixonpr}
\end{figure}
\begin{figure}[h]
\includegraphics[width=\textwidth]{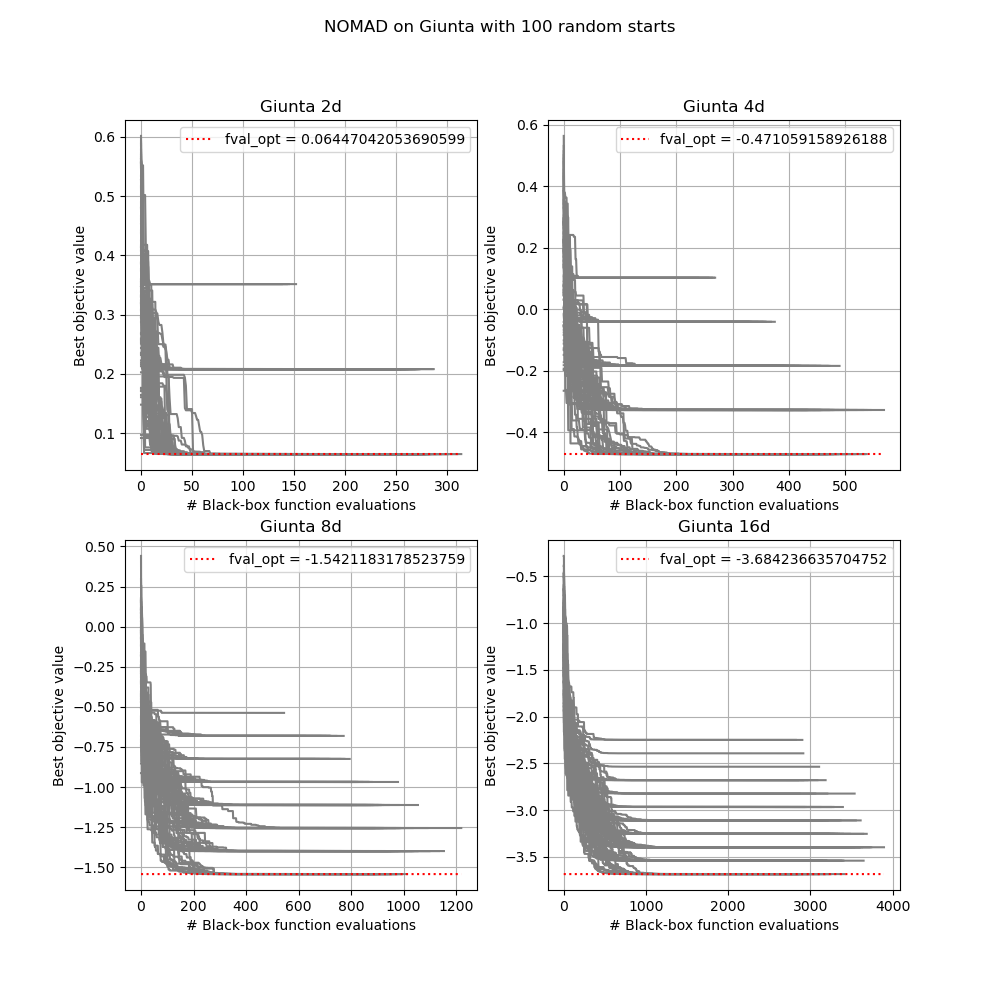}
\caption{\href{https://github.com/luclaurent/optiGTest/blob/master/unConstrained/funGiunta.m}{giunta}}
\label{fig:NOMAD_profiles_giunta}
\end{figure}
\begin{figure}[h]
\includegraphics[width=\textwidth]{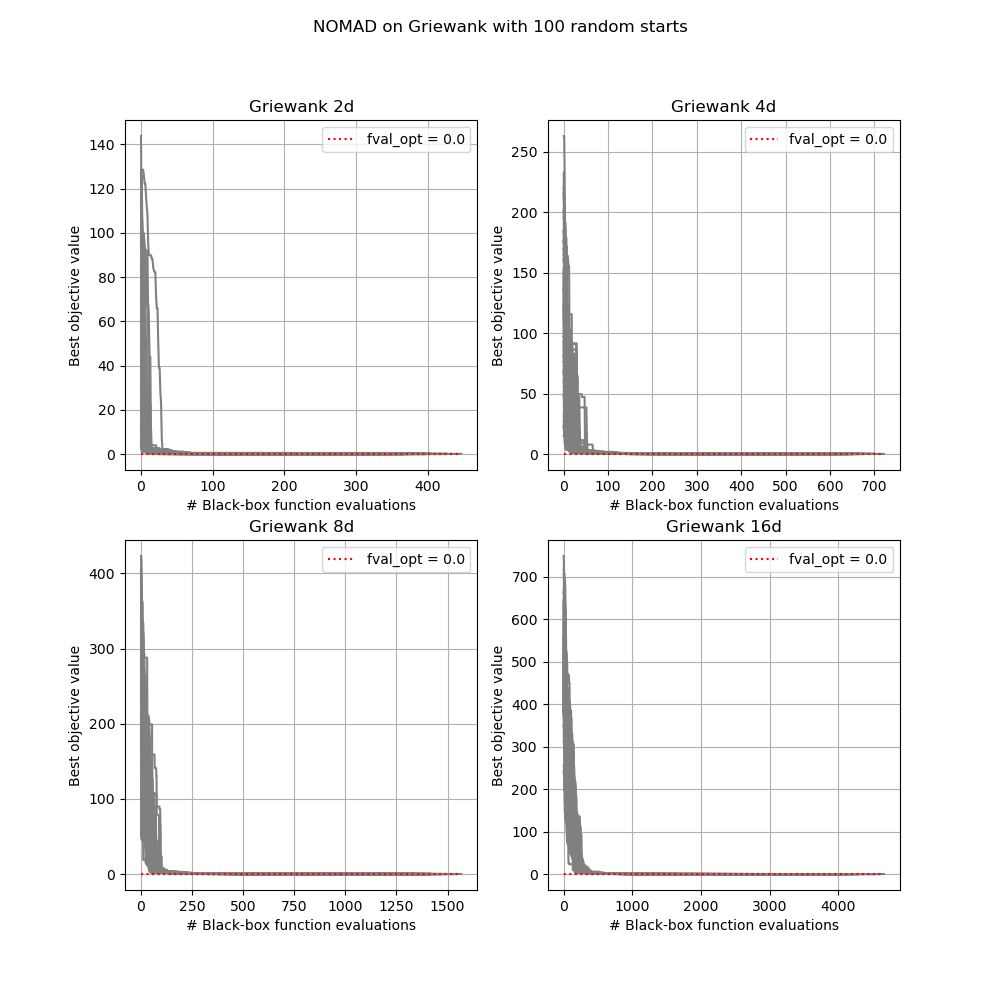}
\caption{\href{http://www.sfu.ca/~ssurjano/griewank.html}{griewank}}
\label{fig:NOMAD_profiles_griewank}
\end{figure}
\begin{figure}[h]
\includegraphics[width=\textwidth]{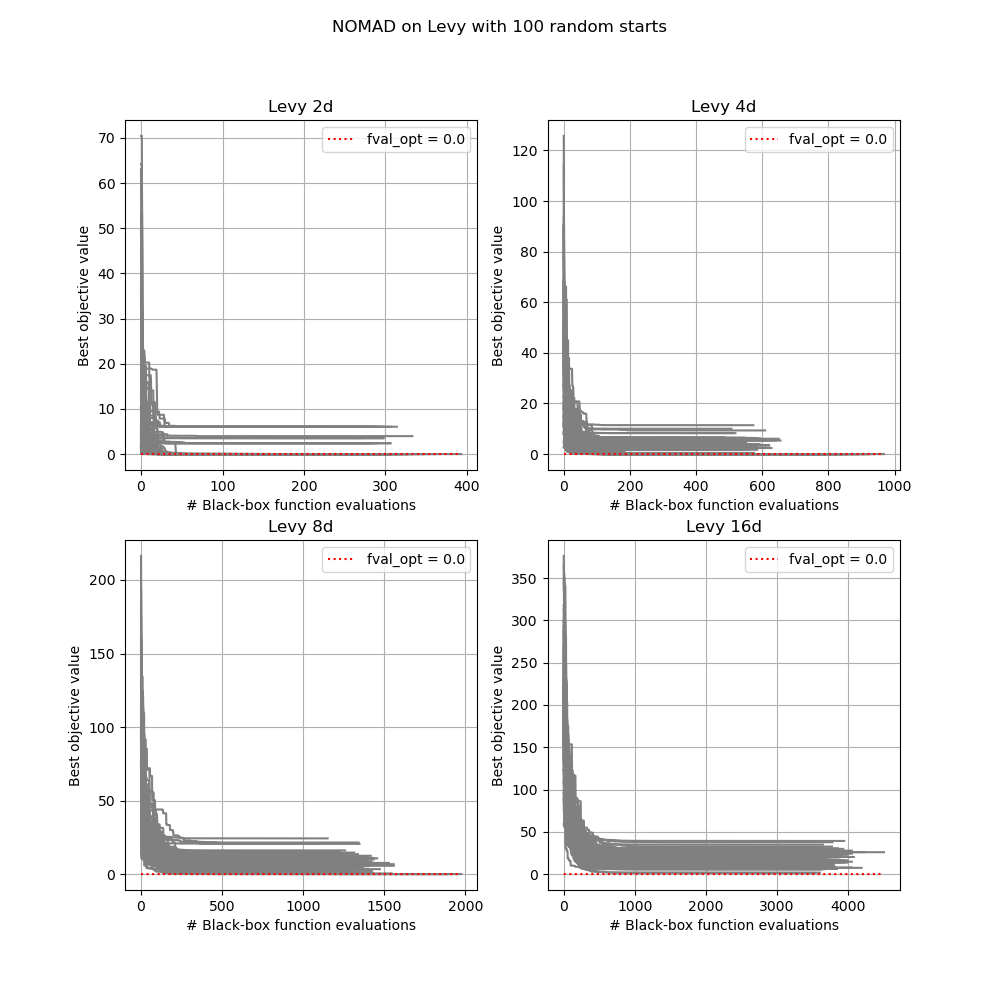}
\caption{\href{http://www.sfu.ca/~ssurjano/levy.html}{levy}}
\label{fig:NOMAD_profiles_levy}
\end{figure}
\begin{figure}[h]
\includegraphics[width=\textwidth]{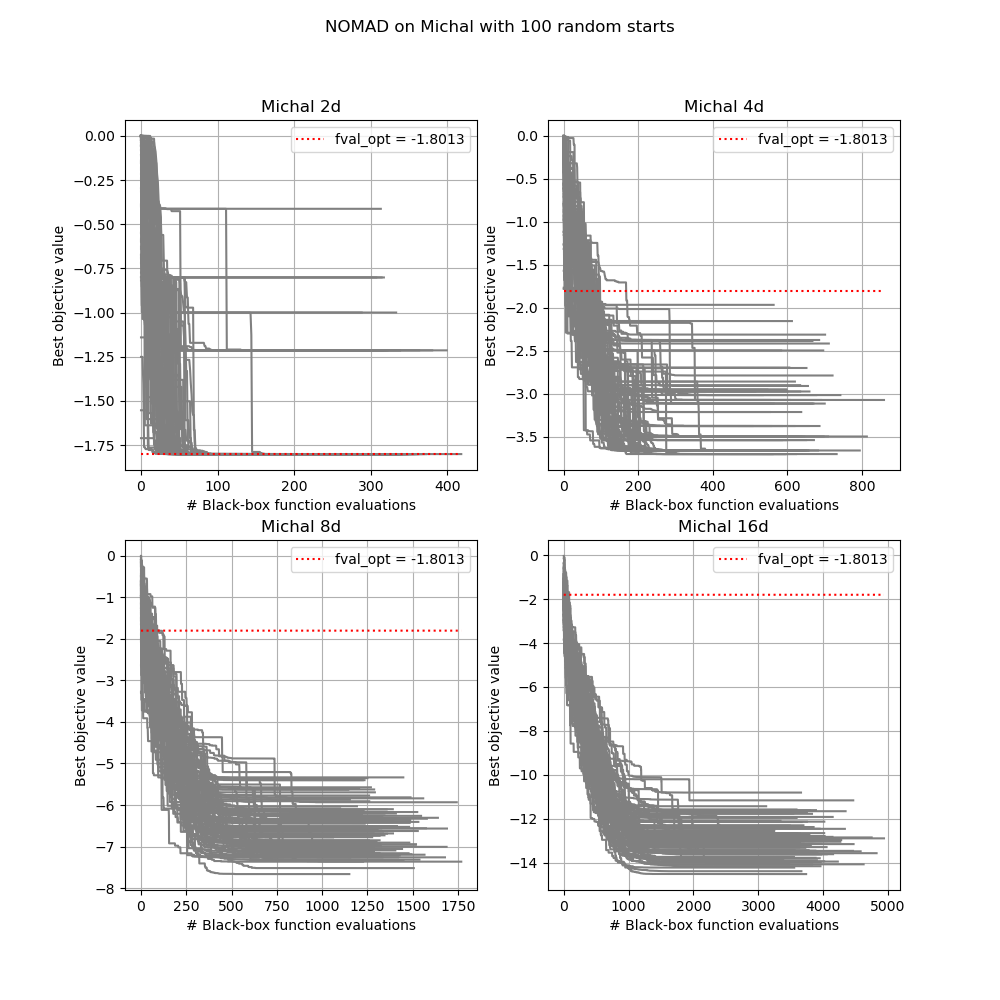}
\caption{\href{http://www.sfu.ca/~ssurjano/michal.html}{michal}}
\label{fig:NOMAD_profiles_michal}
\end{figure}
\begin{figure}[h]
\includegraphics[width=\textwidth]{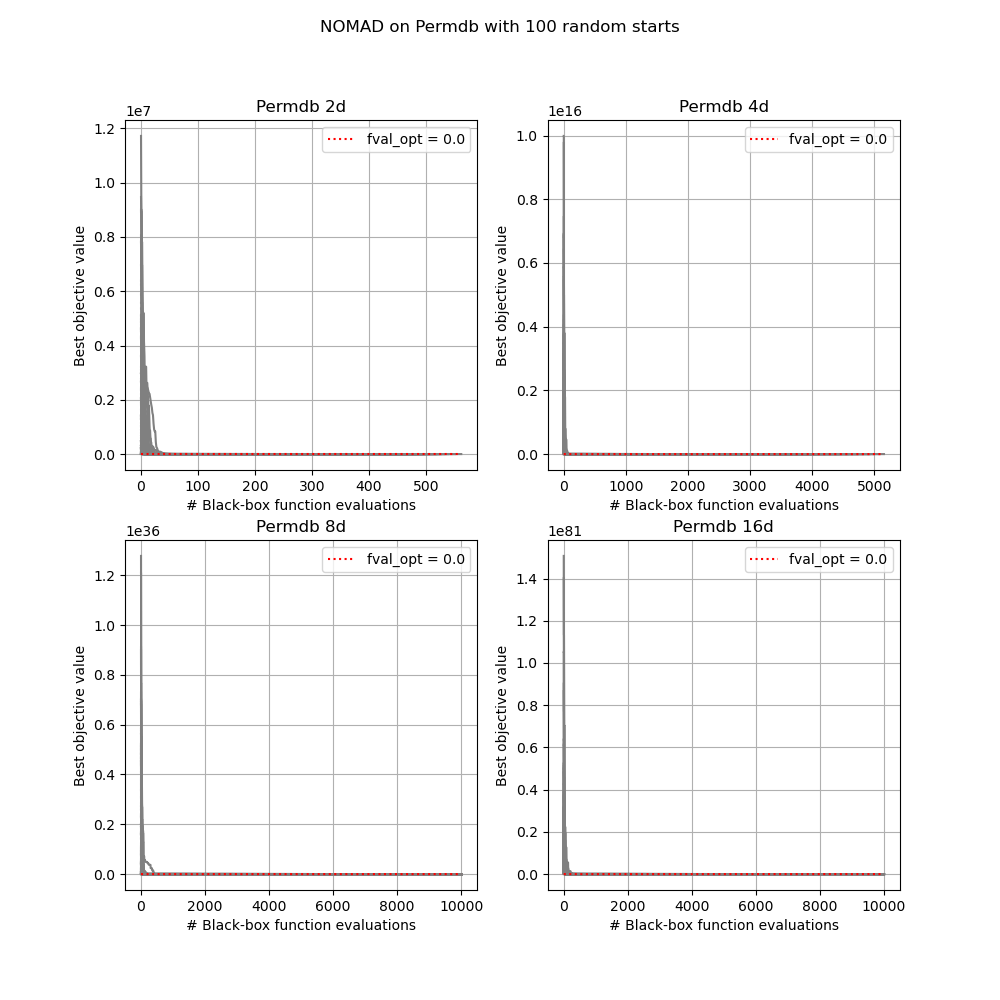}
\caption{\href{http://www.sfu.ca/~ssurjano/permdb.html}{permdb}}
\label{fig:NOMAD_profiles_permdb}
\end{figure}
\begin{figure}[h]
\includegraphics[width=\textwidth]{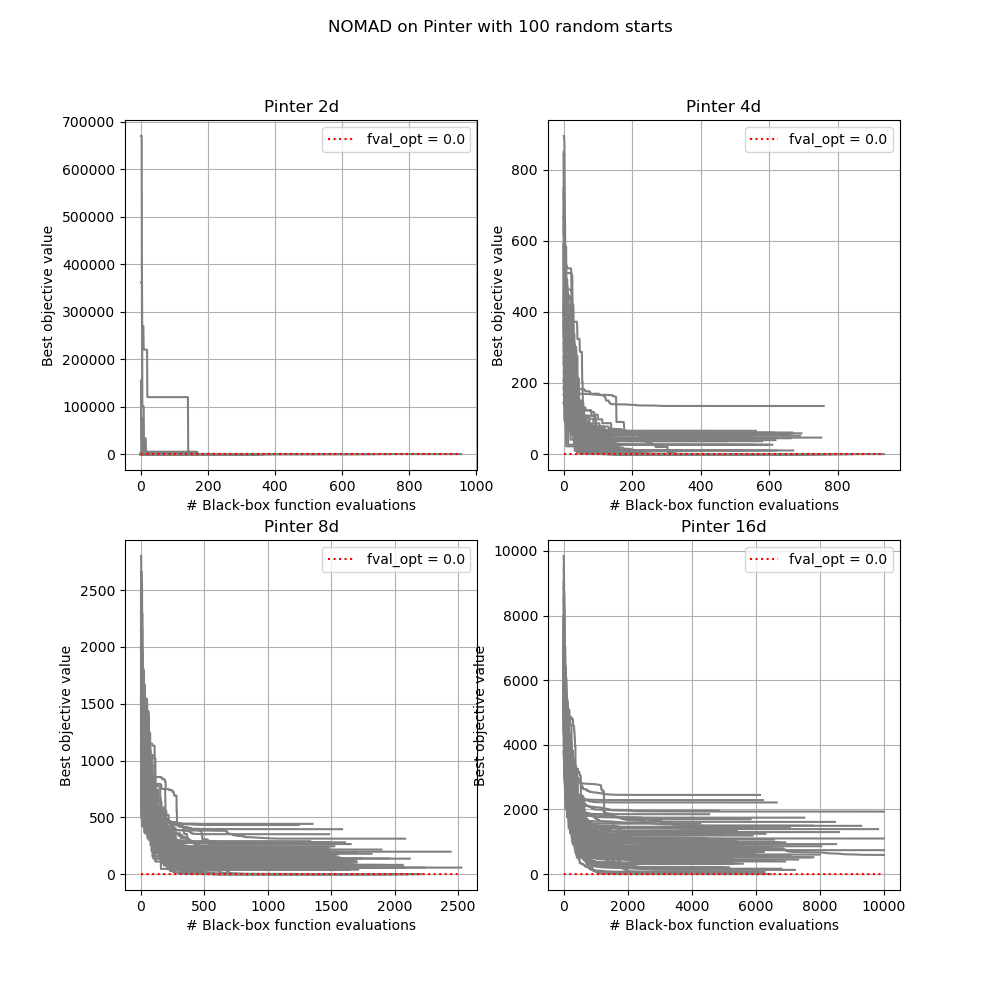}
\caption{\href{http://infinity77.net/global_optimization/test_functions_nd_P.html\#go_benchmark.Pinter}{pinter}}
\label{fig:NOMAD_profiles_pinter}
\end{figure}
\begin{figure}[h]
\includegraphics[width=\textwidth]{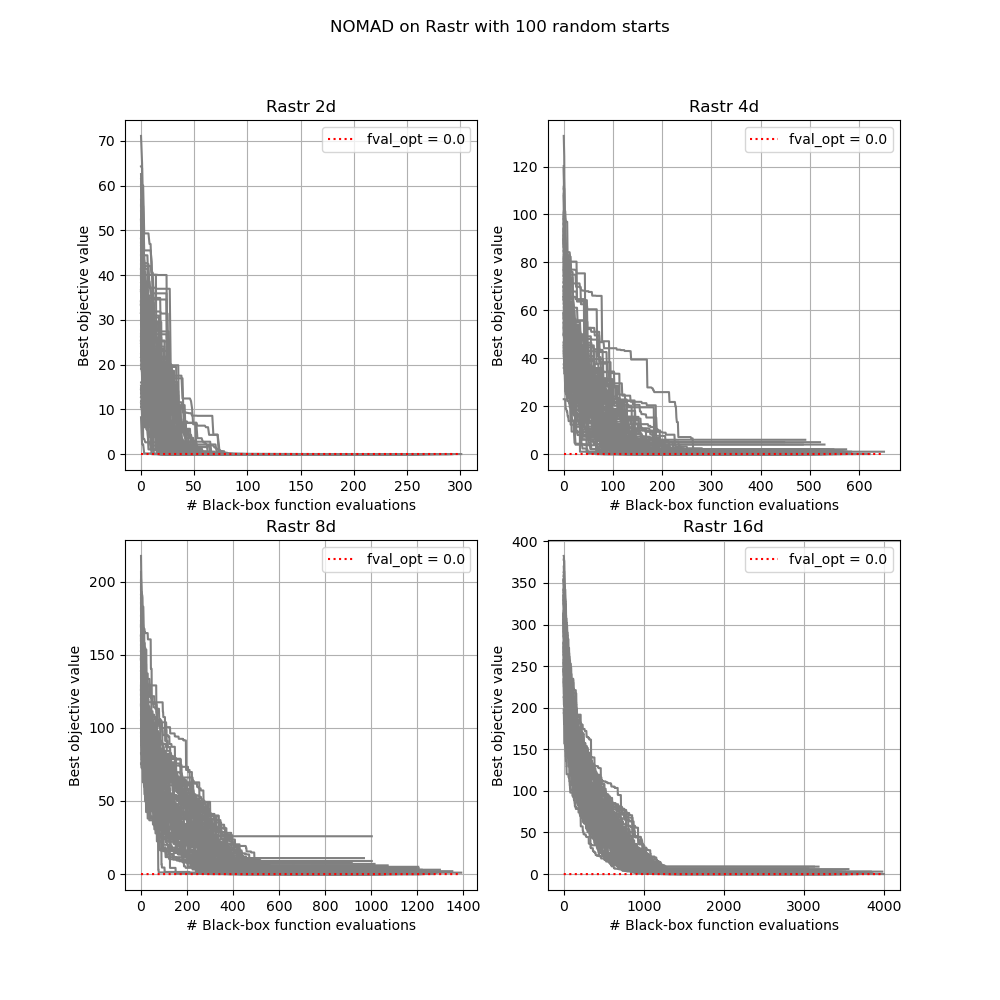}
\caption{\href{http://www.sfu.ca/~ssurjano/rastr.html}{rastr}}
\label{fig:NOMAD_profiles_rastr}
\end{figure}
\begin{figure}[h]
\includegraphics[width=\textwidth]{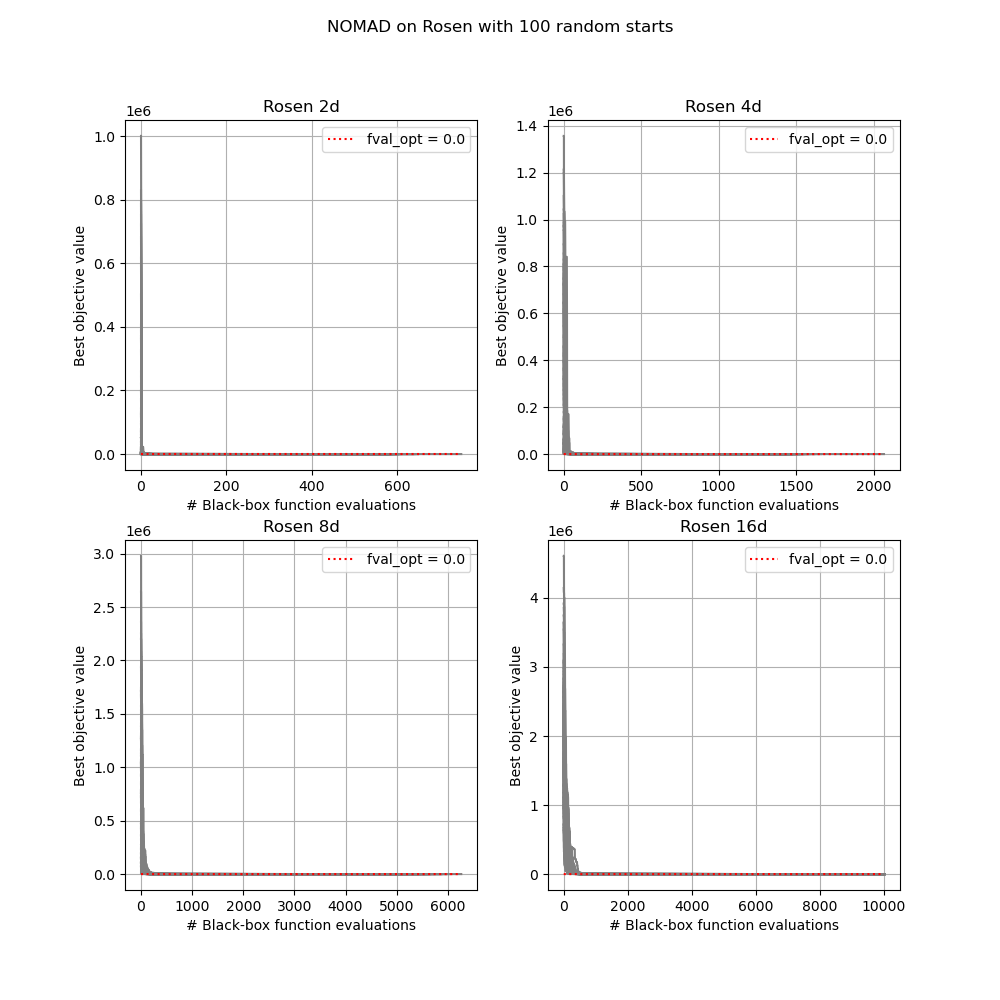}
\caption{\href{http://www.sfu.ca/~ssurjano/rosen.html}{rosen}}
\label{fig:NOMAD_profiles_rosen}
\end{figure}
\begin{figure}[h]
\includegraphics[width=\textwidth]{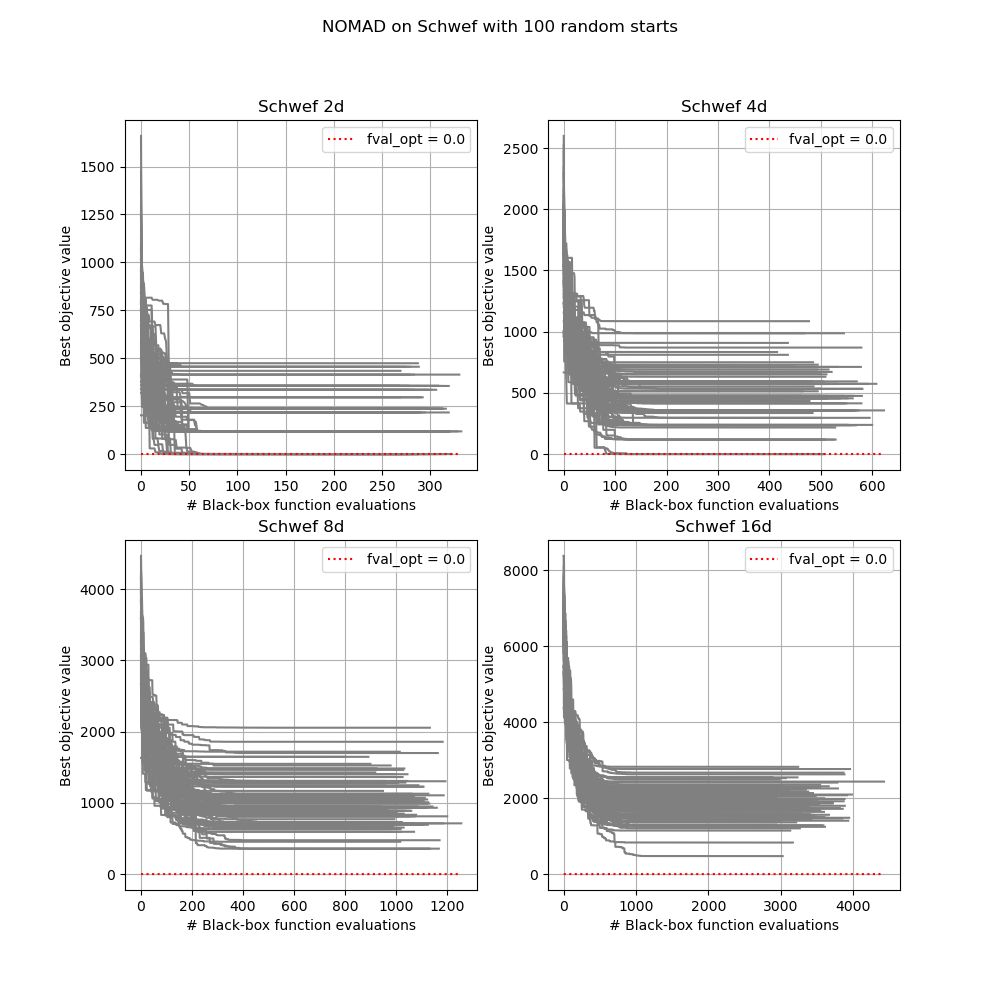}
\caption{\href{http://www.sfu.ca/~ssurjano/schwef.html}{schwef}}
\label{fig:NOMAD_profiles_schwef}
\end{figure}
\begin{figure}[h]
\includegraphics[width=\textwidth]{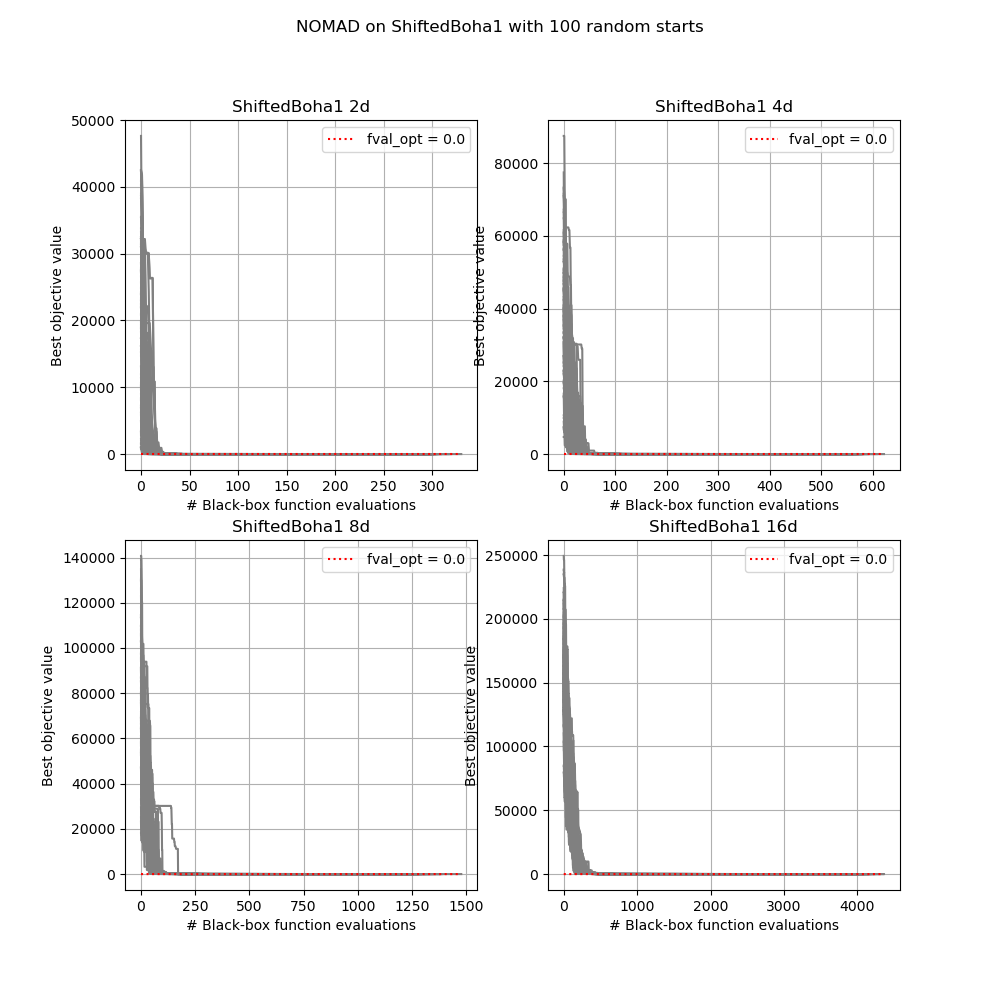}
\caption{\href{https://link.springer.com/article/10.1007/s00500-010-0650-7/figures/7}{shiftedBoha1}}
\label{fig:NOMAD_profiles_shiftedBoha1}
\end{figure}
\begin{figure}[h]
\includegraphics[width=\textwidth]{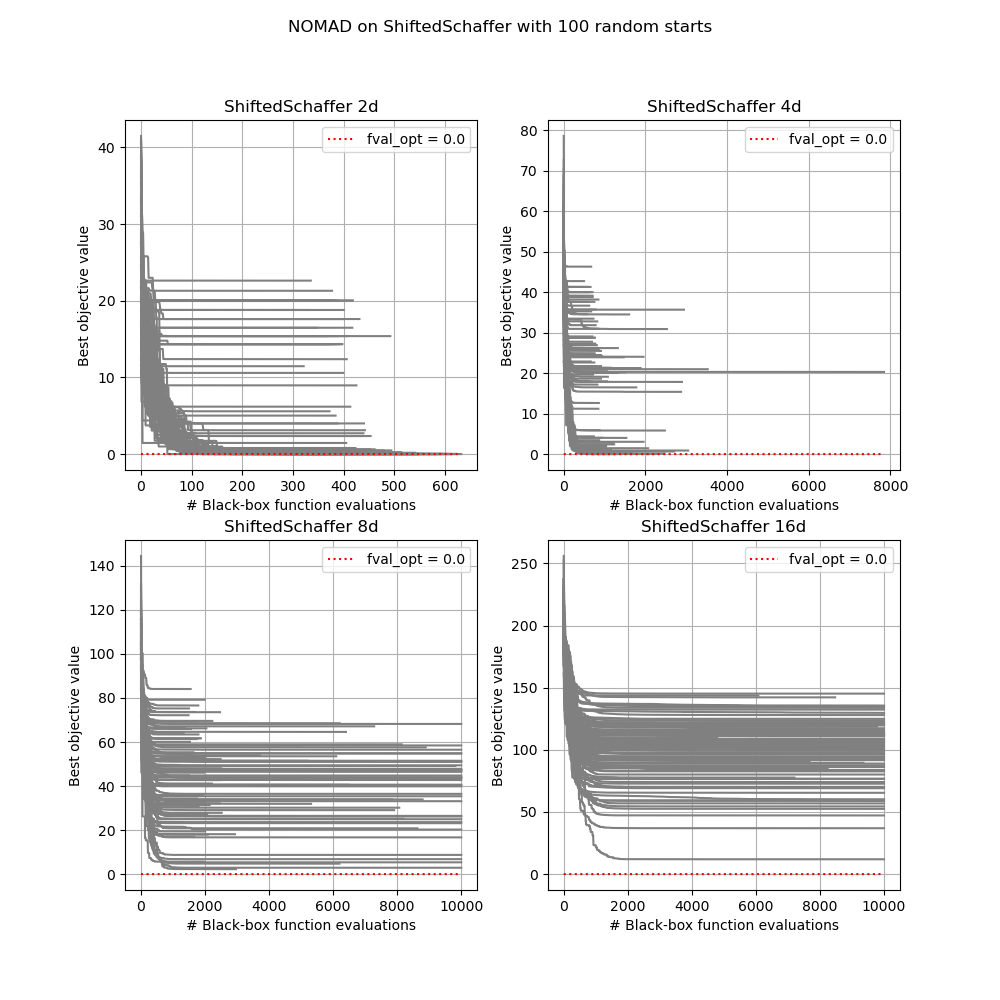}
\caption{\href{https://link.springer.com/article/10.1007/s00500-010-0650-7/figures/7}{shiftedSchaffer}}
\label{fig:NOMAD_profiles_shiftedSchaffer}
\end{figure}
\begin{figure}[h]
\includegraphics[width=\textwidth]{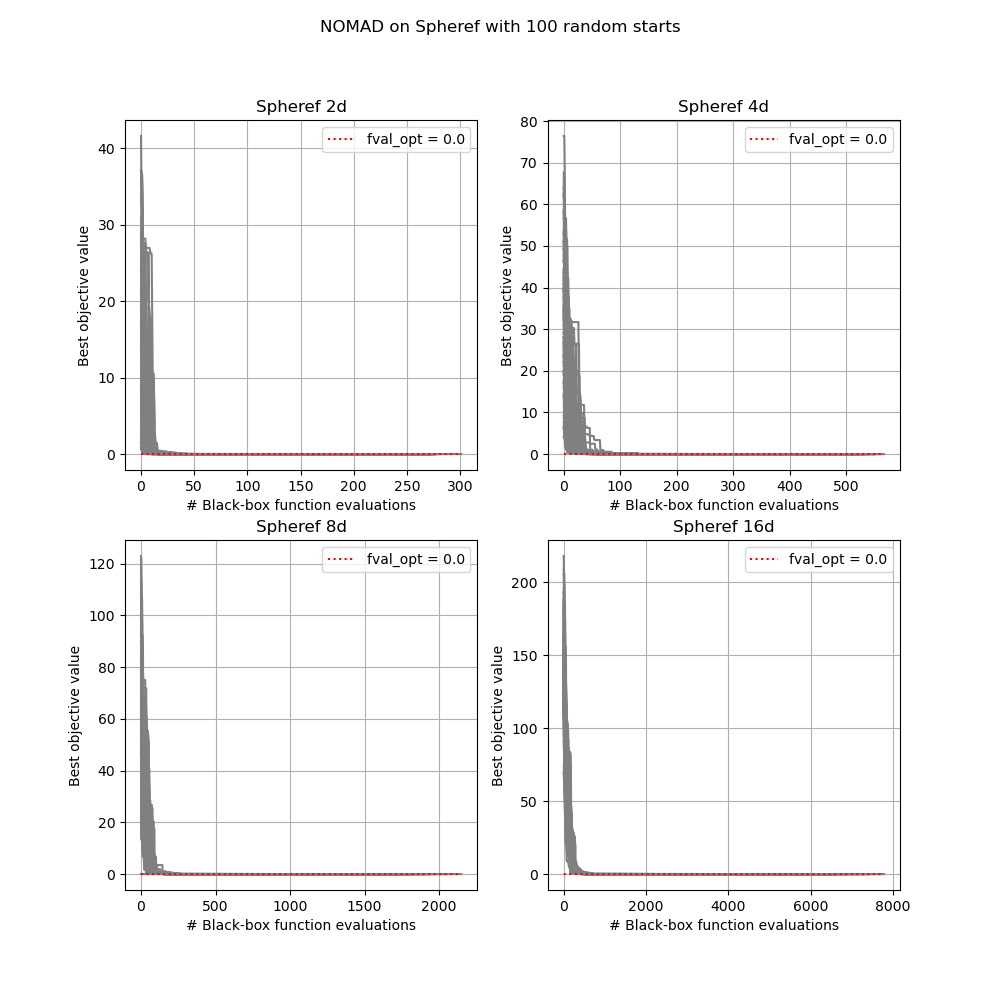}
\caption{\href{http://www.sfu.ca/~ssurjano/spheref.html}{spheref}}
\label{fig:NOMAD_profiles_spheref}
\end{figure}
\begin{figure}[h]
\includegraphics[width=\textwidth]{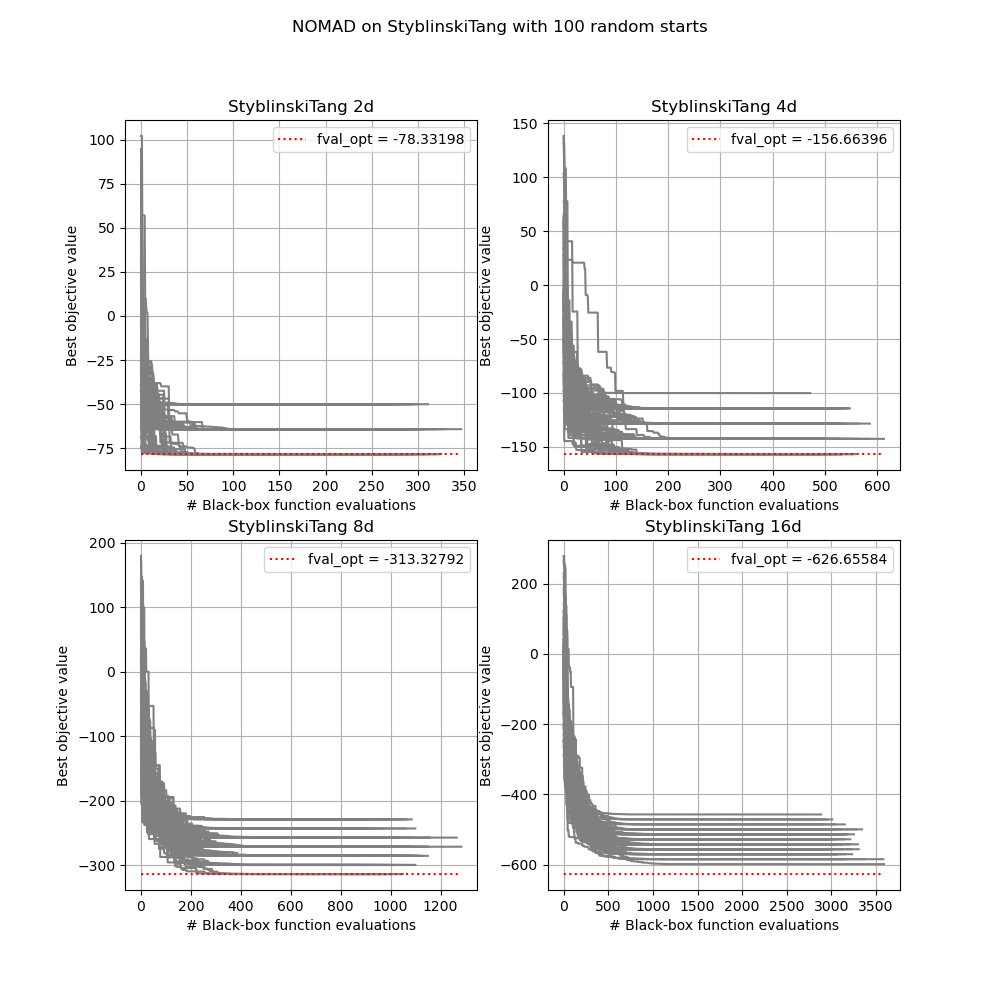}
\caption{\href{http://www.sfu.ca/~ssurjano/stybtang.html}{stybtang}}
\label{fig:NOMAD_profiles_stybtang}
\end{figure}
\begin{figure}[h]
\includegraphics[width=\textwidth]{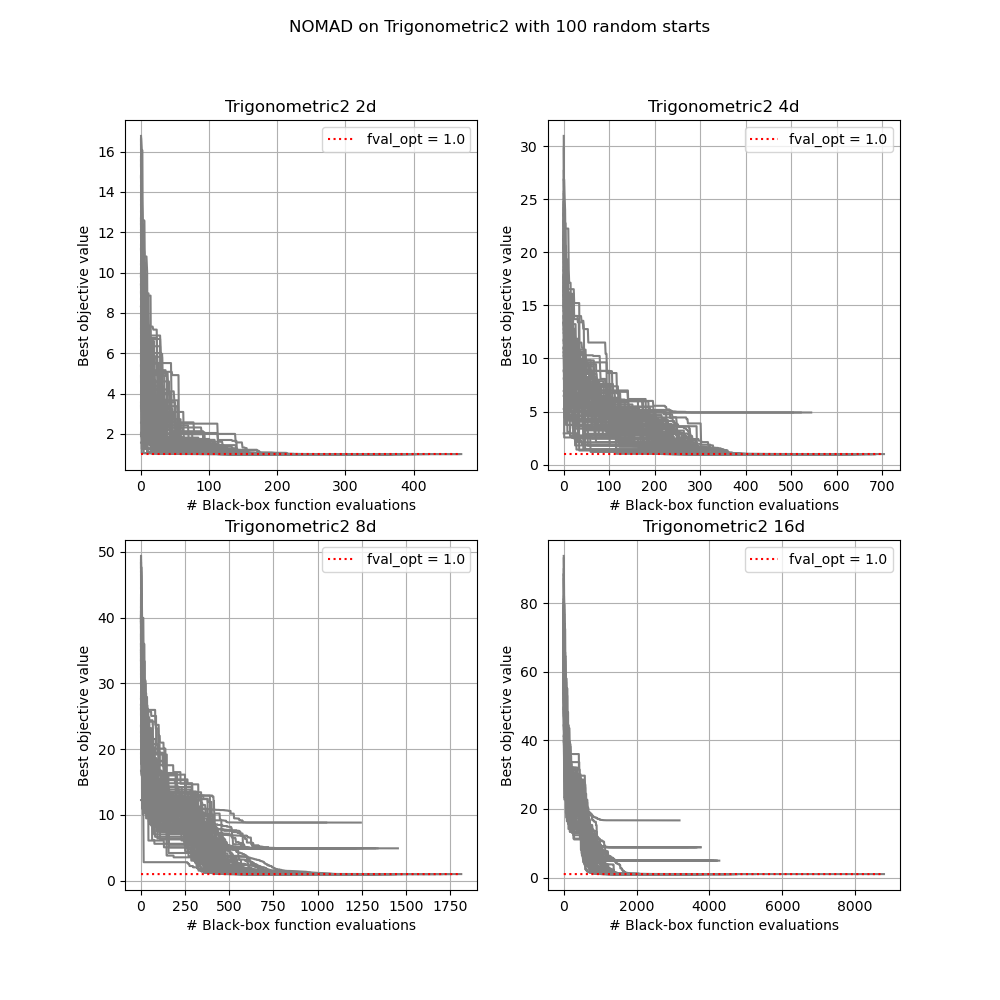}
\caption{\href{https://github.com/luclaurent/optiGTest/blob/master/unConstrained/funTrigonometric2.m}{trigonometric2}}
\label{fig:NOMAD_profiles_trigonometric2}
\end{figure}
\begin{figure}[h]
\includegraphics[width=\textwidth]{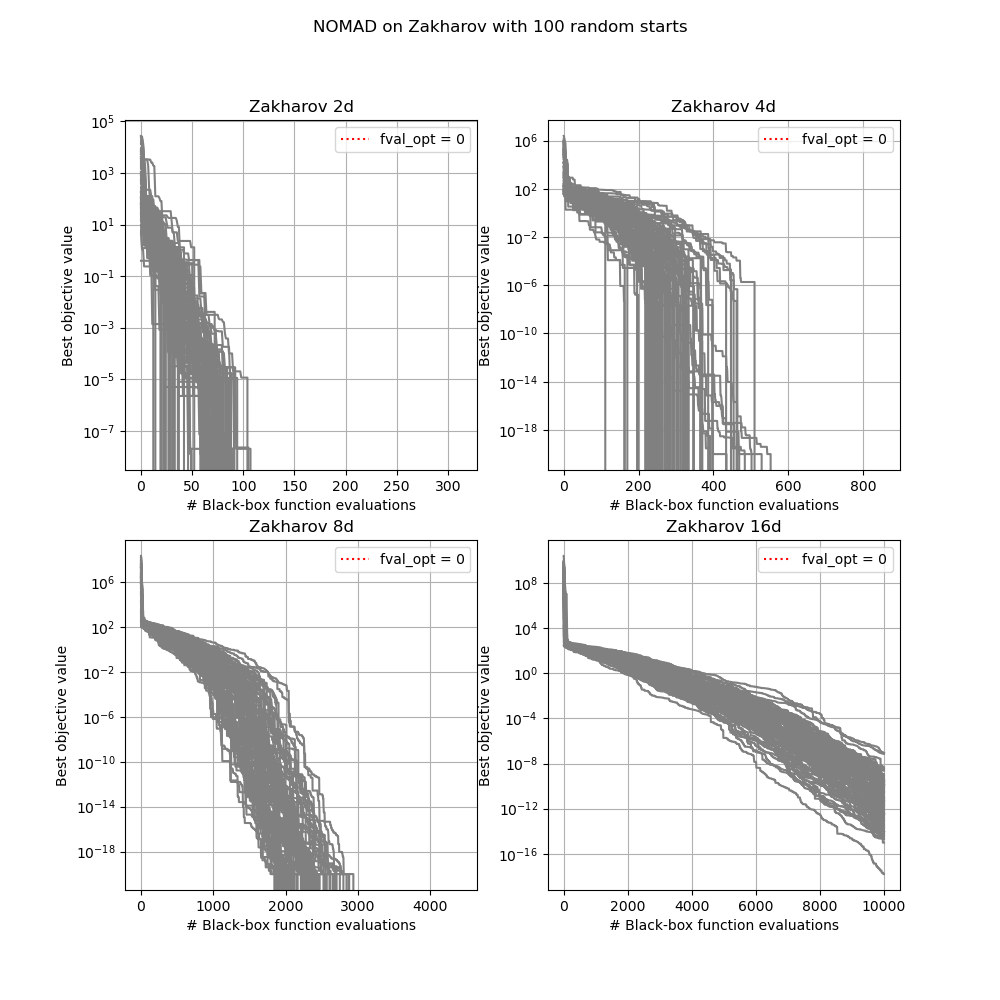}
\caption{\href{http://www.sfu.ca/~ssurjano/zakharov.html}{zakharov}}
\label{fig:NOMAD_profiles_zakharov}
\end{figure}

\end{document}